\newtheorem{theorem}{Theorem}[section]
\newtheorem{proposition}[theorem]{Proposition}
\newtheorem{lemma}[theorem]{Lemma}
\newtheorem{corollary}[theorem]{Corollary}
\newtheorem{fact}[theorem]{Fact}
\theoremstyle{definition}
\newtheorem{definition}[theorem]{Definition}
\theoremstyle{remark}
\newtheorem{remark}[theorem]{Remark}
\newtheorem{example}[theorem]{Example}
\newtheorem{question}[theorem]{Question}
\numberwithin{equation}{section}
\def\Ind{\setbox0=\hbox{$x$}\kern\wd0\hbox to 0pt{\hss$\mid$\hss} \lower.9\ht0\hbox to 0pt{\hss$\smile$\hss}\kern\wd0} 
\def\Notind{\setbox0=\hbox{$x$}\kern\wd0\hbox to 0pt{\mathchardef \nn=12854\hss$\nn$\kern1.4\wd0\hss}\hbox to 0pt{\hss$\mid$\hss}\lower.9\ht0 \hbox to 0pt{\hss$\smile$\hss}\kern\wd0}
\def \d {\delta}
\def \dd {\partial}
\def \D {\Delta}
\def \la {\langle}
\def \ra {\rangle}
\def \N {\mathbb N}
\def \P {\mathcal P}
\def \ord {\operatorname{ord}}
\def \n {\mathfrak n}
\def \Nn {\N^m\times \n}
\def \lUB {\operatorname{LUB}}
\def \l {\mathfrak l}
\def \Vol {\operatorname{Vol}}
\def \L {\Lambda}
\def \M {\mathfrak M}
\title[Estimates for differential dimension polynomials]{Estimates for the coefficients of differential \\ dimension polynomials}
\author{Omar Le\'on S\'anchez}
\address{Omar Le\'on S\'anchez\\
University of Manchester\\
School of Mathematics\\
Oxford Road \\
Manchester, M13 9PL.}
\email{omar.sanchez@manchester.ac.uk}
\date{\today}
\subjclass[2010]{12H05, 14Q20}
\keywords{Kolchin polynomial, typical differential dimension, Hilbert-Samuel functions}
\begin{document}

\begin{abstract}
We answer the following long-standing question of Kolchin: given a system of algebraic-differential equations $\Sigma(x_1,\dots,x_n)=0$ in $m$ derivatives over a differential field of characteristic zero, is there a computable bound, that \emph{only depends on the order of the system} (and on the fixed data $m$ and $n$), for the typical differential dimension of any prime component of $\Sigma$? We give a positive answer in a strong form; that is, we compute a (lower and upper) bound for \emph{all} the coefficients of the Kolchin polynomial of every such prime component. We then show that, if we look at those components of a specified differential type, we can compute a significantly better bound for the typical differential dimension. This latter improvement comes from new combinatorial results on characteristic sets, in combination with the classical theorems of Macaulay and Gotzmann on the growth of Hilbert-Samuel functions.
\end{abstract}

\maketitle

\tableofcontents

\section{Introduction}
The role of numerical polynomials in commutative algebra was initiated by Hilbert's theorem on the growth of the dimension of the homogeneous components of a graded module over a polynomial ring. More precisely, Hilbert showed that given a graded module $M=\bigoplus_s M_s$ over $K[x_1,\dots,x_n]$, there is a numerical polynomial $p(t)$ such that, for $s$ sufficiently large, $p(s)=\dim_KM_s$. This is a generalization of the well known fact that the number of monomials of degree $d$ in $m$ variables equals $\binom{m-1+d}{d}$; note that the latter is a numerical polynomial in $d$.

The corresponding role of numerical polynomials in differential algebra was initiated by Kolchin (and further developed by Johnson and Sit, among several others). Let us briefly recall the relevant objects. Let $(K,\D=\{\d_1,\dots,\d_m\})$ be a differential field of characteristic zero with $m$ commuting derivations, and denote by $K\{x_1,\dots,x_n\}$ the (differential) ring of differential polynomials in $n$ variables over $K$. Given a prime differential ideal $\P$ of $K\{x_1,\dots,x_n\}$, if we set $a=(a_1,\dots,a_n)$ to be a generic point of $\P$ (for instance, choose $a_i=x_i+\P\in K\{x_1,\dots,x_n\}/\P$), then Kolchin showed \cite[Chapter II]{Kolchin} that there is a numerical polynomial $\omega_\P(t)$ such that for $s$ sufficiently large
$$\omega_\P(s)=\operatorname{trdeg}_K K(\d_1^{u_1}\cdots\d_m^{u_m}a_i:\; 1\leq i\leq n,\, u_1+\cdots+u_m\leq s).$$
This numerical polynomial is called the Kolchin polynomial (or differential dimension polynomial) of $\P$. If $\P$ is generated (as a radical differential ideal) by $f_1,\dots,f_\ell$, then, as explained by Mikhalev and Pankratev in \cite{MP}, the Kolchin polynomial $\omega_\P$ is an algebraic form of Einstein's notion of \emph{strength} of the system $f_1=\cdots=f_\ell=0$. Roughly speaking, using Einstein's terminology \cite{Ein}, one can say that, for $s$ sufficiently large, $\omega_\P(s)$ equals the number of free Taylor coefficients of order at most $s$ of a generic solution of the system.

The polynomial $\omega_\P$ carries two important differential birational invariants: its degree, called the differential type of $\P$ and denoted by $\tau$, and its leading coefficient (as a numerical polynomial, see \eqref{numform} below), called the typical differential dimension of $\P$ and denoted by $a_\tau$. Let us consider a simple, but illustrative, example:

\begin{example}\label{introex}
Let $n=1$ and $m=2$; that is, we work in the context of systems in one variable and derivations $\D=\{\d_1,\d_2\}$. Given a nonnegative integer $r$, consider the system $\d_1^rx=\d_2^rx=0$. Note that the differential ideal $\P$ generated by $\d_1^r x$ and $\d_2^rx$ is prime. Also, given a generic solution $a$, for $s\geq 2r$ we have
$$\operatorname{trdeg}_K K(\d_1^{u_1}\d_2^{u_2}a:\, u_1+u_2\leq s)=r^2.$$
Thus, in this case, $\omega_\P(t)$ is constant equal to $r^2$. In particular, the differential type of $\P$ is zero and its typical differential dimension is $r^2$.
\end{example}

The main question we consider in this paper (and originally studied by Kolchin) is the following.

\begin{question}\label{introquestion}
Given nonnegative integers $r,m,n, \tau$, is there a computable function $B(r,m,n,\tau)$ such that for any differential field $(K,\D=\{\d_1,\dots,\d_m\})$ of characteristic zero and any differential system $\Sigma\subset K\{x_1,\dots,x_n\}$ of order $r$, if $\P$ is a prime component of $\Sigma$ of differential type $\tau$, then the typical differential dimension of $\P$ satisfies
$$a_\tau\leq B(r,m,n,\tau).$$
\end{question}

Kolchin devoted a section of his book \cite[Chapter IV, \S17]{Kolchin} on this question. While he does not give a definitive answer, he is able to show the following (partial) result.

\begin{fact}\label{introfact}
Using the notation of Question \ref{introquestion}, if $\tau=m$ then $a_\tau\leq n$, and if $\tau=m-1$ then $a_\tau\leq nr$. These bounds are clearly optimal (in the sense that one can find a system $\Sigma$ where equality holds).
\end{fact}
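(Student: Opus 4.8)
The plan is to derive both bounds from the description of $\omega_\P$ in terms of a characteristic set, the case $\tau=m$ being essentially immediate. Let $a=(a_1,\dots,a_n)$ be a generic point of $\P$. For every $s$ the field $K(\d_1^{u_1}\cdots\d_m^{u_m}a_i: 1\le i\le n,\ u_1+\cdots+u_m\le s)$ is generated over $K$ by $n\binom{s+m}{m}$ elements, so $\omega_\P(s)\le n\binom{s+m}{m}$ for $s$ large; the right-hand side is a numerical polynomial of degree $m$ and leading (binomial) coefficient $n$, so if $\deg\omega_\P=\tau=m$ then $a_\tau\le n$, with equality for $\P=(0)$, where $\omega_\P(t)=n\binom{t+m}{m}$.

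For $\tau=m-1$ I would use characteristic sets. Fix an orderly ranking of $K\{x_1,\dots,x_n\}$, let $A$ be a characteristic set of $\P$, and for $1\le i\le n$ let $E_i\subseteq\N^m$ be the set of $\mu=(\mu_1,\dots,\mu_m)$ such that $\d_1^{\mu_1}\cdots\d_m^{\mu_m}x_i$ is the leader of some element of $A$. For $\nu\in\N^m$ put $|\nu|=\nu_1+\cdots+\nu_m$, and for $E\subseteq\N^m$ set $V_E=\bigcup_{\mu\in E}(\mu+\N^m)$ and $\omega_E(t)=\#\{\nu\in\N^m:|\nu|\le t,\ \nu\notin V_E\}$. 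Kolchin's theory of characteristic sets then gives $\omega_\P(t)=\sum_{i=1}^n\omega_{E_i}(t)$ for $t$ large: for each $s$, the derivatives $\d_1^{\mu_1}\cdots\d_m^{\mu_m}a_i$ with $\mu\notin V_{E_i}$ and $|\mu|\le s$ form a transcendence basis over $K$ of $K(\d_1^{u_1}\cdots\d_m^{u_m}a_i: 1\le i\le n,\ |u|\le s)$. I then record two elementary facts about $\omega_E$: (i) $\omega_\emptyset(t)=\binom{t+m}{m}$; (ii) if $E\ne\emptyset$, then for any $\mu\in E$ one has $\{\nu:|\nu|\le t\}\setminus V_E\subseteq\{\nu:|\nu|\le t\}\setminus(\mu+\N^m)$, hence $\omega_E(t)\le\binom{t+m}{m}-\binom{t-|\mu|+m}{m}=\sum_{j=0}^{|\mu|-1}\binom{t-j+m-1}{m-1}$, a numerical polynomial of degree $\le m-1$ whose coefficient of $\binom{t+m-1}{m-1}$ is $|\mu|$. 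Since the $\omega_{E_i}$ are nonnegative, no top-degree terms cancel in $\omega_\P=\sum_i\omega_{E_i}$, so $\tau=\max_i\deg\omega_{E_i}$ and $a_\tau$ equals the sum of the top (binomial) coefficients of those $\omega_{E_i}$ of degree $\tau$. In particular $\tau=m-1$ forces every $E_i$ to be nonempty; choosing $\mu_i\in E_i$ for each $i$, fact (ii) shows that the coefficient of $\binom{t+m-1}{m-1}$ in $\omega_{E_i}$ is at most $|\mu_i|$, whence $a_{m-1}=a_\tau\le\sum_{i=1}^n|\mu_i|$.

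It therefore remains to bound each $|\mu_i|$ by $r$, i.e.\ to show that a prime component of a differential system of order $r$ admits an orderly characteristic set all of whose leaders have order at most $r$. This is the one place where the order of $\Sigma$ (rather than merely the fact that $\P\ne(0)$) enters, and it is the step I expect to be the main obstacle: it requires a careful analysis of how the elements of $\Sigma$ — which have order $\le r$ — reduce to zero against the characteristic set of $\P$, and I would invoke Kolchin's treatment of this point in \cite[Chapter IV, \S17]{Kolchin}. Granting it, $a_{m-1}\le nr$. For optimality, take $\Sigma=\{\d_1^{r}x_1,\dots,\d_1^{r}x_n\}$: the differential ideal $\P$ it generates is prime, since $K\{x_1,\dots,x_n\}/\P\cong K[\,\d_1^{\mu_1}\cdots\d_m^{\mu_m}x_i: 1\le i\le n,\ \mu_1<r\,]$ is a polynomial ring; here $E_i=\{(r,0,\dots,0)\}$ for every $i$, so $\omega_\P(t)=n\bigl(\binom{t+m}{m}-\binom{t-r+m}{m}\bigr)$, a numerical polynomial of degree $m-1$ with typical differential dimension $nr$.
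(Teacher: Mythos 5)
Your argument for $\tau=m$ is correct and essentially the standard one: $\omega_\P(s)\le n\binom{s+m}{m}$ forces the top standard coefficient to be at most $n$, with equality for the zero ideal.

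For $\tau=m-1$, the structure of the argument (write $\omega_\P=\sum_i\omega_{E_i}$ via a characteristic set, observe each $E_i\ne\emptyset$ since $\tau<m$, and bound the coefficient of $\binom{t+m-1}{m-1}$ in $\omega_{E_i}$ by $|\mu|$ for any $\mu\in E_i$) is sound up to the point where you try to bound $|\mu_i|$. But the claim you defer to Kolchin — \emph{``a prime component of a differential system of order $r$ admits an orderly characteristic set all of whose leaders have order at most $r$''} — is false. This is precisely what makes Kolchin's question hard: the order of a characteristic set of a prime component of an order-$r$ system is bounded only by the Ackermannian quantity $C_{r,m}^n$ of Fact~\ref{boundchar}, and this bound can be attained; if it were always at most $r$, the delicate combinatorial analysis of Sections~\ref{combi} and~\ref{several} (and the whole paper) would be unnecessary. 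So the step you flag as ``the main obstacle'' is not a technical point one can quietly cite away — it is where the argument breaks.

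Even the weaker statement you actually need — that when $\tau=m-1$ \emph{each} $E_i$ contains some element of order at most $r$ — does not follow from anything in the paper. Theorem~\ref{combinatorialchar} at level $h=r$ only produces \emph{one} index $j$ with two leaders of order $\le r$; it says nothing about the remaining $E_i$'s. For $n=1$ this does give a minimal leader of order $\le r$ and your argument then closes (matching the paper's Remark after Corollary~\ref{finally}, where the new machinery recovers $a_{m-1}\le r$ only for $n=1$). For $n>1$, however, the paper's own bound $\M(r,m,n,m-1)=C_{r,m}^{n-1}$ is strictly weaker than Kolchin's $nr$, which strongly suggests that Kolchin's proof of the $\tau=m-1$ case does not go through the naive ``pick a low-order leader in each $E_i$'' route at all. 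In short: the reduction you set up is fine, but the input you would need at the crucial step is either false as stated or not established, so the $\tau=m-1$ case is not actually proved.
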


Moreover, in that same section of his book, he conjectures that (with no assumption on $\tau$)
$$a_\tau\leq n\cdot\binom{r-1+m-\tau}{m-\tau}.$$
However, this bound does not generally hold. Indeed, in Example \ref{introex} we saw that there are systems, in two derivations and one variable, of differential type zero with typical differential dimension $r^2$. Kolchin's conjectural bound yields $\frac{r(r+1)}{2}$, which is less than $r^2$ when $r$ is larger than one\footnote{It was pointed out to me, by Alexander Levin, that Kolchin was aware that his conjectural bound is incorrect.}.

There has been some attempts to answer Kolchin's question (Question \ref{introquestion}). For example, in \cite{Kon}, Kondratieva makes the additional assumption that the degree of the system $\Sigma$ is also bounded by $r$ and then shows that if $\tau\leq m-2$ we have 
$$a_\tau\leq 2^{2^{2^{m-\tau-2}}}(2A(m+7,2^{9r}))^{(m+2)2^{m-\tau-2}}$$
where $A$ denotes the Ackermann function. In the case when the system is linear, in \cite{Gri} Grigoriev proved
$$a_\tau\leq n(4m^2nr)^{4^{m-\tau-1}(2(m-\tau))}$$
There are two disadvantages of these bounds. First, they depend on the degree of the system, whereas Kolchin's question asks for a bound that only depends on the order. Second, even for small values of $m$ they are highly not optimal. For instance, consider the case $m=2$, $n=1$, and $\tau=0$; Kondratieva's bound involves a term of the form $A(9,2^{9r})$, which yields extremely large values, while Grigoriev's bound yields $(16\,r)^{16}$. These are both far from the optimal value since, as we will see in Corollary \ref{final2}, in this case $a_\tau\leq r^2$ (and this is optimal by Example \ref{introex}).

The goal of this paper is to show that Question \ref{introquestion} has a positive answer, and we obtain two different ways of constructing appropriate bounds. The first construction appears in Section \ref{rough} where we answer the question in a stronger form; that is, we prove that there is a computable (recursive) bound, that only depends on the order of the differential system $\Sigma$, for the sum of the absolute values of the (standard) coefficients of the Kolchin polynomial of any prime component $\P$ of $\Sigma$. In particular, this yields a bound for the typical differential dimension, and so this already answers Kolchin's question positively. We note that this bound does not depend on the differential type of $\P$. In the case when $m=2$, $n=1$ and $\tau=0$, this bound yields $a_0\leq 4r^2(2r+1)^2$, but as we noted at the end of the above paragraph one can in fact show that in this case $a_0\leq r^2$. To prove the latter, we perform a second construction of a bound, that now depends on the differential type, for the typical differential dimension. In Section \ref{several} we build (recursively) a sequence $\bar\mu$, depending on the data $(r,m,n,\tau)$, inside of $\N^m\times\{1,\dots,n\}$ with the property that its Hilbert-Samuel function bounds from above the Hilbert-Samuel function associated to any prime component with differential type $\tau$ of a differential system of order $r$. The proof of this fact uses a new result on the combinatorial structure of characteristic sets of prime differential ideals, established in Section \ref{combi}, together with the well known theorems of Macaulay and Gotzmann on the growth of the Hilbert-Samuel function. Finally, in Section \ref{compu}, we use the algorithmic construction of the sequence $\bar\mu$ to give explicit formulas to compute bounds for the differential type, and we perform these computations for the cases $m=2$ and $3$.

It is worth mentioning that J. Freitag has recently (re-)established some bounds and finiteness principles in differential algebra \cite{James}, using results from the theory of well quasi-orderings. One of the results there can be considered as a partial (noneffective) converse to our main result here. More precisely, he shows that given a numerical polynomial $p(t)$ there is a bound, that depends only on the coefficients of $p(t)$ (and the number of derivations and variables), for the order of a characteristic set of a prime differential ideal with Kolchin polynomial $p(t)$.
 
\bigskip

\noindent { \it Acknowledgements.} I am grateful to the organizers of the Differential Algebra Workshop April 2016 (supported by CUNY CIRG \#2248) for providing a very stimulating and friendly environment where part of this work was initiated.

\section{Notation and preliminaries}\label{pre}

In this section we fix the notation that will be used throughout the paper. For the convenience of the reader, we give an account of the classical and recent results that will be used in subsequent sections. Fix a differential field $(K,\D=\{\d_1,\dots,\d_m\})$ of characteristic zero with $m$ commuting derivations. Henceforth, the differential terminology will be with respect to our distinguished set $\D$ of commuting derivations. 

For an $m$-tuple $\xi=(u_1,\dots,u_m)\in \N^m$, we define the order of $\xi$ by 
$$\ord\xi=u_1+\cdots+u_m$$
and use multi-index notation to denote derivative operators $\d^\xi:=\d_1^{u_1}\cdots\d_{m}^{u_m}$. Fix an $n$-tuple $x=(x_1,\dots,x_n)$ of differential indeterminates. We denote the (differential) ring of differential polynomials in variables $x$ over $K$ by
$$K\{x\}:=K[\d^\xi x_i: 1\leq i\leq n, \; \xi\in \N^m].$$

Recall that a polynomial $p\in \mathbb Q[t]$ is \emph{numerical} if $p(s)\in \mathbb Z$ for all integers $s$. Such polynomials always have the form
\begin{equation}\label{numform}
p(t)=\sum_{i=0}^da_i\binom{t+i}{i}, \quad a_i\in \mathbb Z.
\end{equation}
We will sometimes refer to the $a_i$'s as the \emph{standard coefficientes} of $p$.

Given a prime differential ideal $\P$ of $K\{x\}$ and a generic point $a=(a_1,\dots,a_n)$ of $\P$ (for instance, one can take $a$ to be the image of $x$ under the canonical map $K\{x\}\to K\{x\}/\P$), the \emph{Kolchin polynomial of $\P$} (or differential dimension polynomial) is the unique numerical polynomial $\omega_\P(t)$ such that for sufficiently large $s\in \N$ 
$$\omega_{\P}(s)=\operatorname{trdeg}_KK(\d^\xi a_i:1\leq i\leq n, \; \ord\xi\leq s)$$
See \cite[Chapter II]{Kolchin} for details on this. For our purposes, it suffices to note that $\deg \omega_\P\leq m$ and that the standard coefficient $a_m$ of $\omega_\P$ equals the differential transcendence degree, over $K$, of $K\la a\ra$ (the differential field generated by $a$ over $K$). Additionally, the Kolchin polynomial has two important differential birational invariants: the \emph{differential type} of $\P$ is defined as the degree of $\omega_\P$ and the \emph{typical differential dimension of $\P$} is the standard leading coefficient of $\omega_\P$.

We now wish to state the (well known) fact that the Kolchin polynomial of $\P$ can be computed from the leaders of a characteristic set of $\P$. To do this, let us recall that the canonical orderly ranking $\unlhd$ on the set of (algebraic) indeterminates $\{\d^\xi x_i:1\leq i\leq n,\; \xi\in \N^m\}$ is defined as follows: $\d_1^{e_1}\cdots\d_m^{e_m}x_i\unlhd\d_1^{e_1'}\cdots\d_m^{e_m'}x_j$ if and only if
$$(\sum_k e_k,i,e_1,\dots,e_m)\leq_{\text{lex}} (\sum_k e_k',j,e_1',\dots,e_m')$$
where $\leq_{\text{lex}}$ denotes the (left) lexicographic order. The \emph{leader} of a differential polynomial $f\in K\{x\}\setminus K$ is the highest $\d^\xi x_i$ that appears in $f$ with respect to $\unlhd$, and the order of $f$ is simply the order of its leader. Given any finite collection of differential polynomials $\Sigma \subset K\{x\}\setminus K$, by a leader of $\Sigma$ we mean a leader of one of its elements and by the order of $\Sigma$ we mean the maximum order among its elements. As we would like to avoid certain technicalities that are unnecessary for our purposes, we will not give the proper definition of a characteristic set. Let us just say that, for us, a \emph{characteristic set} of the prime differential ideal $\P\subset K\{x\}$ is a finite subset of $\P$ which is ``reduced'' and ``minimal'' with respect to the canonical orderly ranking $\unlhd$. We refer the reader to \cite[Chapter I]{Kolchin} for further details. 

We now recall the notion of volume of lattices of $\N^m$. We let $\leq$ denote the product order on $\N^m$; that is, $(u_1,\dots,u_m)\leq (v_1,\dots,v_m)$ means that $u_i\leq v_i$ for $i=1,\dots,m$. Given any $E\subseteq \N^m$ and a nonnegative integer $s$, the \emph{volume of $E$} at level $s$ is
$$V_E(s)=\{\xi\in \N^m: \ord\xi\leq s \text{ and } \xi\not\geq \eta \text{ for all }\eta\in E\}.$$
%We recall that $|V_{\N^m}(s)|=\binom{s+m}{m}$. 
In \cite[Chapter 0, \S17]{Kolchin}, Kolchin shows that for any $E\subseteq \N^m$ there is a numerical polynomials $\omega_E(t)$ such that for sufficiently large $s\in \N$
$$\omega_E(s)=|V_E(s)|.$$
Furthermore, $\deg \omega_E\leq m$; equality occurs if and only if $E$ is empty, in which case 
\begin{equation}\label{maxi}
\omega_E(t)=\binom{t+m}{m}.
\end{equation}

We can now state the following result of Kolchin's \cite[Chapter II, \S12]{Kolchin}. 
\begin{fact}\label{factKol}
Let $\P$ be a prime differential ideal of $K\{x\}$ and $\L$ a characteristic set of $\P$. If for each $1\leq i\leq n$ we denote by $E_i$ the set of all $\xi\in\N^m$ such that $\d^\xi x_i$ is a leader of $\L$, then
$$\omega_\P(t)=\sum_{i=1}^n \omega_{E_i}(t).$$
\end{fact}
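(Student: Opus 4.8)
I would argue as follows. Let $a=(a_1,\dots,a_n)$ be a generic point of $\P$, so that $\P$ consists precisely of the differential polynomials vanishing at $a$. Write $\L=\{g_1,\dots,g_k\}$ and let $v_j=\d^{\eta_j}x_{\ell_j}$ be the leader of $g_j$, so that $E_i=\{\eta_j:\ell_j=i\}$. Fix $s\in\N$. The plan is to exhibit an explicit transcendence basis of $K(\d^\xi a_i:\ord\xi\le s)$ over $K$: partition the finite set of algebraic indeterminates of order at most $s$,
\[
\Theta_s=\{\d^\xi x_i:1\le i\le n,\ \ord\xi\le s\},
\]
into the set $P_s$ of \emph{parametric derivatives} — those $\d^\xi x_i$ with $\xi\not\ge\eta$ for every $\eta\in E_i$ — and its complement $D_s$, the derivatives of leaders. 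By definition, $|P_s|=\sum_{i=1}^n|V_{E_i}(s)|$. If I can show that $\{\d^\xi a_i:\d^\xi x_i\in P_s\}$ is a transcendence basis of $K(\d^\xi a_i:\ord\xi\le s)$ over $K$, then for all large $s$ this field has transcendence degree $\omega_\P(s)$ on one hand and $\sum_i\omega_{E_i}(s)$ on the other, so the numerical polynomials $\omega_\P(t)$ and $\sum_i\omega_{E_i}(t)$ agree for all large $s$ and are therefore equal.

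First I would prove algebraic independence. If $Q$ is a polynomial over $K$ in the variables $\{\d^\xi x_i\in P_s\}$ with $Q(\dots,\d^\xi a_i,\dots)=0$, then $Q\in\P$. But $Q$ involves no proper derivative of any leader $v_j$ and has degree $0$ in each $v_j$, so $Q$ is reduced with respect to $\L$. Since $\L$ is a characteristic set of the prime ideal $\P$, the only element of $\P$ reduced with respect to $\L$ is $0$ (see \cite[Chapter I]{Kolchin}), whence $Q=0$; thus $\{\d^\xi a_i:\d^\xi x_i\in P_s\}$ is algebraically independent over $K$.

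Next I would show, by induction on the orderly ranking $\unlhd$ restricted to the finite totally ordered set $\Theta_s$, that every $\d^\xi a_i$ with $\d^\xi x_i\in D_s$ is algebraic over $K(\d^\xi a_i:\d^\xi x_i\in P_s)$. Given $\d^\xi x_i\in D_s$, choose $j$ with $\ell_j=i$ and $\xi\ge\eta_j$, set $\zeta=\xi-\eta_j\in\N^m$, and put $h=\d^\zeta g_j$ if $\zeta\ne 0$ and $h=g_j$ if $\zeta=0$. Then $h\in\P$ and its leader is $\d^\xi x_i$; since $\unlhd$ is orderly, every other indeterminate occurring in $h$ has order at most $\ord\xi\le s$ and ranks strictly below $\d^\xi x_i$, so it lies in $\Theta_s$. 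The coefficient of the highest power of $\d^\xi x_i$ in $h$ is the separant $S_{g_j}$ (if $\zeta\ne 0$) or the initial $I_{g_j}$ (if $\zeta=0$); in either case it does not lie in $\P$ because $\L$ is a characteristic set, hence it is nonzero at $a$. Thus $h(a)=0$ is a nontrivial algebraic relation exhibiting $\d^\xi a_i$ as algebraic over the field generated by the lower-ranked derivatives at $a$, which by the inductive hypothesis are algebraic over $K(\d^\xi a_i:\d^\xi x_i\in P_s)$. Hence $\{\d^\xi a_i:\d^\xi x_i\in P_s\}$ is a transcendence basis, so $\operatorname{trdeg}_K K(\d^\xi a_i:\ord\xi\le s)=\sum_i|V_{E_i}(s)|$ for every $s$, and comparison with the defining relations of $\omega_\P$ and the $\omega_{E_i}$ yields $\omega_\P(t)=\sum_{i=1}^n\omega_{E_i}(t)$.

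The point requiring the most care is the reliance on a black-box notion of characteristic set: both steps above use standard properties not proved in this excerpt — that a nonzero element of $\P$ is never reduced with respect to $\L$, and that the initials and separants of the elements of $\L$ avoid $\P$ — each of which is immediate from the full definition in \cite[Chapter I]{Kolchin} and which I would simply cite. Beyond that, the only thing to monitor is that the reduction stays within $\Theta_s$, which is exactly what orderliness of $\unlhd$ guarantees; there is no genuine combinatorial difficulty here.
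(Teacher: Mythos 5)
The paper states this as a \emph{Fact} attributed to Kolchin \cite[Chapter II, \S12]{Kolchin} and does not supply a proof. Your argument is correct and is essentially the classical proof found in Kolchin: you identify the parametric derivatives (those $\d^\xi a_i$ with $\xi$ not above any $\eta\in E_i$) as a transcendence basis of $K(\d^\xi a_i:\ord\xi\le s)$ over $K$, using the fact that a nonzero element of $\P$ cannot be reduced with respect to $\L$ for algebraic independence, and using prolongations $\d^\zeta g_j$ of the characteristic set together with nonvanishing of initials and separants at the generic point for the spanning step. The appeal to orderliness of $\unlhd$ to keep the reduction inside $\Theta_s$ is exactly the point that needs checking, and you handle it correctly. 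The two black-box properties of characteristic sets that you invoke (no nonzero reduced element of $\P$; initials and separants are not in $\P$) are indeed standard and immediate from the definitions in \cite[Chapter I]{Kolchin}, so citing them is appropriate. In short: correct, and the same route one finds in Kolchin.
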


We now wish to recall Macaulay's theorem and Gotzmann's persistence theorem on the growth of Hilbert-Samuel functions (for details we refer the reader to \cite[Chapter IV]{BH}). Given positive integers $a$ and $d$, one can uniquely write $a$ in the form
\begin{equation}\label{rep22}
a=\binom{k_d}{d}+\binom{k_{d-1}}{d-1}+\cdots\binom{k_j}{j}
\end{equation}
where $k_d>k_{d-1}>\cdots>k_j\geq j\geq 1$. This is called the \emph{$d$-binomial representation of $a$}. Using this representation, one defines the Macaulay's function $*^{\la d\ra}:\N\to \N$ by setting
$$a^{\la d\ra}:=\binom{k_d+1}{d+1} +\binom{k_{d-1}+1}{d}+\cdots + \binom{k_j+1}{j+1}$$
and $0^{\la d\ra}=0$. It follows from the definition that 
\begin{equation}\label{ineq11}
a< b \implies  a^{\la d\ra}<b^{\la d\ra} 
\end{equation}

We recall that a subset $M\subseteq \N^m$ is said to be \emph{compressed} if for all $\xi,\eta\in \N^m$ with $\ord\xi=\ord\eta$ we have
$$\left(\eta\unrhd \xi \text{ and } \xi\geq \tau \text{ for some }\tau\in M\right)\implies (\eta\geq \zeta \text{ for some } \zeta\in M).$$
The \emph{Hilbert-Samuel function} $H_M:\N\to\N$ of $M$ is defined as
$$H_M(d)=|\{\xi\in \N^m: \ord \xi=d \text{ and } \xi\not\geq \eta \text{ for all }\eta\in M\}|.$$

\begin{theorem}[Macaulay's theorem]\label{Macaulay}
For any $M\subseteq \N^m$, and positive integer $d$, we have
$$H_M(d+1)\leq H_M(d)^{\la d\ra}$$
Moreover, if $M$ is compressed and $\ord\xi\leq d$ for all $\xi\in M$, then
$$H_M(d+1)=H_M(d)^{\la d\ra}.$$
\end{theorem}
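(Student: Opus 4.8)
The plan is to translate the statement into one about the Hilbert function of a monomial ideal, where it becomes exactly the classical form of Macaulay's theorem recorded in \cite[Chapter IV]{BH}. Let $S=K[y_1,\dots,y_m]$ be the standard graded polynomial ring in $m$ variables, and for $M\subseteq\N^m$ let $I=I_M\subseteq S$ be the monomial ideal generated by $\{y^\eta:\eta\in M\}$, where $y^\eta:=y_1^{\eta_1}\cdots y_m^{\eta_m}$. A monomial $y^\xi$ of degree $e$ lies outside $I$ if and only if $\xi\not\geq\eta$ for every $\eta\in M$, so $H_M(e)=\dim_K(S/I)_e$ is precisely the Hilbert function of $S/I$ (we may assume $0\notin M$, otherwise $I=S$, $H_M\equiv 0$, and both assertions are trivial). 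With this identification, $H_M(d+1)\leq H_M(d)^{\la d\ra}$ is just Macaulay's bound on the growth of Hilbert functions of graded quotients of a polynomial ring, applied to $I$.

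For the equality I would use the two hypotheses to force $I$ to be, in the relevant degrees, a lex-segment ideal, for which the Macaulay bound is attained. First, unwinding the definition shows that $M$ being compressed is equivalent to: for every $e$, the set of degree-$e$ standard monomials is an initial segment of the degree-$e$ monomials under the orderly ranking $\unlhd$ (which restricts to a total order on each set $\{\xi\in\N^m:\ord\xi=e\}$); indeed the contrapositive of compressedness says exactly that if $\xi\unlhd\eta$ have the same order and $\eta$ is standard then $\xi$ is standard. Equivalently, $I_e$ consists of the $\dim_K I_e$ largest monomials of degree $e$ for every $e$, i.e. $I$ is a lex-segment ideal. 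Second, the hypothesis $\ord\eta\leq d$ for all $\eta\in M$ says that $I$ is generated in degrees $\leq d$, and hence $I_{d+1}=S_1\cdot I_d$: any degree-$(d+1)$ monomial $y^\xi\in I$ satisfies $\xi\geq\eta$ for some $\eta\in M$ with $\ord\eta\leq d$, so there is an index $k$ with $\xi_k>\eta_k$, whence $y^{\xi-e_k}\in I_d$ (with $e_k\in\N^m$ the $k$-th standard basis vector) and $y^\xi=y_k\cdot y^{\xi-e_k}\in S_1I_d$. For a lex-segment ideal generated in degrees $\leq d$, the Macaulay bound is an equality, $\dim_K(S/I)_{d+1}=\bigl(\dim_K(S/I)_d\bigr)^{\la d\ra}$, by the sharpness part of Macaulay's theorem; this gives $H_M(d+1)=H_M(d)^{\la d\ra}$.

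The one substantive ingredient is this sharpness assertion, namely that multiplying the degree-$d$ lex segment of a prescribed size by $y_1,\dots,y_m$ produces exactly the degree-$(d+1)$ lex segment whose size is dictated by the Macaulay function. This is the combinatorial heart of the theorem, and the step I expect to require genuine work. One standard route is induction on the number $m$ of variables: restrict modulo $y_m$ (Green's hyperplane-restriction theorem) to compare the $m$-variable configuration with an $(m-1)$-variable one, track the effect on the $d$-binomial representation, and reassemble, the cases $m\leq 2$ being immediate. Alternatively one can avoid $S$ altogether and argue directly on $\N^m$ by a compression argument in the style of Clements and Lindström: iteratively replace the order ideal $\{\xi:\xi\not\geq\eta\text{ for all }\eta\in M\}$ by its $\unlhd$-initial compressions along a fixed coordinate, check that this never decreases $H_M(d+1)$ while preserving $H_M(d)$, and then evaluate the fully compressed configuration by hand. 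Since the resulting statement is exactly the one recalled in \cite[Chapter IV]{BH}, in the text it suffices to cite it; the content above is just the dictionary that specializes it to the present combinatorial formulation.
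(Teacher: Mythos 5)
The paper states this theorem as a classical result, giving only the reference to \cite[Chapter IV]{BH} and no proof of its own; your proposal correctly supplies the dictionary that the paper leaves implicit, namely that $H_M$ is the Hilbert function of $S/I_M$ for the monomial ideal $I_M=(y^\eta:\eta\in M)$, that the ``compressed'' condition is precisely the lex-segment property of $I_M$ (an initial segment of standard monomials in each degree under $\unlhd$, equivalently a terminal lex segment in $I_M$), and that $\ord\xi\leq d$ for all $\xi\in M$ means $I_M$ is generated in degrees $\leq d$, after which the inequality and the sharpness for lex-segment ideals generated in low degree are exactly the classical Macaulay statements in the same source. So your route is essentially the same as the paper's (invoke the classical theorem), with the translation spelled out correctly and the edge cases ($0\in M$, $M=\emptyset$) handled properly.
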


\begin{theorem}[Gotzmann's persistence theorem]\label{Gotzmann}
Let $M\subset \N^m$. If there is a positive integer $d$ such that $\ord\xi\leq d$ for all $\xi\in M$ and $H_M(d+1)=H_M(d)^{\la d\ra}$, then $H_M(s+1)=H_M(s)^{\la s\ra}$ for all $s\geq d$.
\end{theorem}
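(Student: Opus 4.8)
This is Gotzmann's classical persistence theorem (see \cite[Chapter IV]{BH}), and my plan is to recall the standard argument in the form adapted to the combinatorial statement above. First, a translation: fix a field $k$, set $S=k[x_1,\dots,x_m]$, and let $J\subseteq S$ be the monomial ideal generated by $\{x^\eta:\eta\in M\}$. Then $J$ is generated in degrees $\le d$, and $H_M(t)=\dim_k(S/J)_t$ for every $t$, so the assertion is exactly that the Hilbert function of $S/J$ grows at the maximal Macaulay rate from degree $d$ onward, given that it does so at the single step $d\to d+1$.

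Next I would isolate the essential content using lexsegments. Since $J$ is generated in degrees $\le d$ we have $J_t=S_{t-d}\cdot J_d$ for all $t\ge d$. Let $V\subseteq S_d$ be the lexsegment subspace of dimension $\dim_k J_d$. Reading off the equality case of Macaulay's theorem (Theorem~\ref{Macaulay}) for lexsegments, together with the hypothesis $H_M(d+1)=H_M(d)^{\la d\ra}$, one finds that $\dim_k(S_1\cdot J_d)=\dim_k(S_1\cdot V)$ --- that is, $J_d$ grows as slowly in one step as any subspace of its dimension can. Iterating Macaulay's equality for $V$ and Macaulay's inequality for $J_d$, one checks that the desired conclusion is in fact equivalent to the single statement that $\dim_k(S_e\cdot J_d)=\dim_k(S_e\cdot V)$ for every $e\ge1$. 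In other words, after this reformulation the whole theorem becomes the claim that one step of minimal growth of the subspace $J_d\subseteq S_d$ forces minimal growth at every subsequent step.

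This last claim is the heart of the matter, and is the step I expect to be the real obstacle: it is the genuinely nontrivial content of Gotzmann's theorem and is not a formal consequence of Macaulay's (otherwise persistence would be a corollary of Macaulay's inequality alone, which it is not). I would prove it by induction on the number of variables $m$, reducing modulo a generic linear form and using Green's hyperplane restriction theorem --- together with his ``crystallization principle'', which is precisely the statement that achieving Macaulay's bound once propagates --- to transfer the minimal-growth hypothesis to a quotient of $k[x_1,\dots,x_{m-1}]$, where the inductive hypothesis applies; the cases $m\le1$ are trivial. For the monomial ideals that actually arise here there is also a purely combinatorial route: one tracks the compression operators used in the proof of Macaulay's theorem and verifies that, under the minimal-growth hypothesis, none of them alters the Hilbert function in any degree beyond $d$. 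Once this core fact is in hand, the remaining implications are routine bookkeeping with binomial representations, and the persistence relation $H_M(s+1)=H_M(s)^{\la s\ra}$ for all $s\ge d$ follows.
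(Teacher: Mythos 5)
The paper does not prove Gotzmann's persistence theorem; it is recalled as a classical result with a citation to Bruns--Herzog (\cite[Chapter IV]{BH}) and used as a black box, so there is no proof in the paper to compare yours against. Your sketch follows the standard textbook route (essentially Green's): pass to the monomial ideal $J\subseteq k[x_1,\dots,x_m]$ generated by the monomials $x^\eta$ with $\eta\in M$, note that $J$ is generated in degrees $\le d$, reduce via lexsegments to the statement that $J_d$ and the lexsegment subspace $V$ of the same dimension generate ideals with the same Hilbert function in every degree $\ge d$, and then argue by induction on $m$ by restricting to a generic hyperplane. Those preliminary reductions are correct.

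The issue, which you flag yourself, is that the inductive step --- which is the actual content of Gotzmann's theorem --- is deferred rather than carried out. Naming Green's hyperplane restriction theorem and the crystallization principle identifies the right tools but does not discharge the step: crystallization is itself a theorem of comparable depth, and one still has to run the induction carefully (controlling what happens to the Macaulay representation under a generic hyperplane section). Since the paper only cites the result, citation is also the appropriate level of detail here; a self-contained treatment would require writing out the full hyperplane-section argument. I would also not advertise the ``purely combinatorial route via compression operators'' without working it out in full: persistence is genuinely not a formal byproduct of the proof of Macaulay's inequality, even for monomial ideals, so that shortcut is not as routine as the phrasing suggests.
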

 
Macaulay's theorem gives a \emph{sufficient} condition for the Hilbert-Samuel function to have maximal growth, we will also make use of the following theorem that gives a \emph{necessary} condition to have maximal growth (which appears as Corollary 4.9 in \cite{GO}). For $\xi=(u_1,\dots,u_m)$ and $\eta=(v_1,\dots,v_m)$ in $\N^m$, we let 
$$\lUB(\xi,\eta)=(\max\{u_1,v_1\},\dots,\max\{u_m,v_m\});$$
in other words, $\lUB(\xi,\eta)$ is the least upper bound of $\xi$ and $\eta$ with respect to the product order.

\begin{theorem}\label{HS}
Let $M\subseteq \N^m$ and $d>1$ an integer. If $H_M(d)=H_M(d-1)^{\la d-1\ra}$, then for any distinct $\xi,\xi'\in M$ of order at most $d-1$ there exists a sequence $\xi=\eta_1,\eta_2,\dots,\eta_s=\xi'$ of $M$ with each $\eta_i<\lUB(\xi,\xi')$ and of order at most $d-1$ such that
$$\ord\lUB(\eta_i,\eta_{i+1})\leq d, \quad \text{ for all }i=1,\dots,s-1.$$
\end{theorem}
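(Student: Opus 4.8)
The plan is to argue by contraposition: I assume that some distinct $\xi,\xi'\in M$ of order $\le d-1$ cannot be joined by a sequence of the stated form, and aim to derive $H_M(d)<H_M(d-1)^{\la d-1\ra}$, contradicting the hypothesis. First I would normalize $M$. We may assume $M$ is an antichain for the product order (passing to its set of minimal elements leaves $\uparrow M$, hence every $H_M(s)$, unchanged; and if $\xi$ or $\xi'$ is not minimal, the statement for it reduces to the statement for a minimal element strictly below it, since an $M$-element is joined to any $M$-element strictly below it by a two-term sequence). We may also discard every element of $M$ of order $>d$, as this changes neither $H_M$ in degrees $\le d$ nor the elements of order $\le d-1$ and the sequences among them. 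Finally, by Macaulay's theorem (Theorem \ref{Macaulay}), deleting a minimal element of order exactly $d$ would strictly increase $H_M(d)$, contradicting $H_M(d)\le H_M(d-1)^{\la d-1\ra}$ together with the equality hypothesis; so $M$ has no such element, and is now an antichain all of whose elements have order $\le d-1$. I also record the elementary observation that, in the absence of an admissible sequence, $\ord\lUB(\xi,\xi')\ge d+1$: otherwise the two-term sequence $\xi,\xi'$ would qualify (note $\xi,\xi'<\lUB(\xi,\xi')$ since $M$ is an antichain).

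The combinatorial heart is a separation lemma. Write $\ell=\lUB(\xi,\xi')$ and let $G$ be the graph with vertex set $\{\eta\in M:\eta\le\ell\}$ in which $\eta$ and $\eta'$ are adjacent iff $\ord\lUB(\eta,\eta')\le d$. By hypothesis $\xi$ and $\xi'$ lie in distinct connected components of $G$; let $P$ be the component of $\xi$ and set $Q=M\setminus P$. Then
$$(\uparrow P)\cap(\uparrow Q)\cap\{\zeta:\ord\zeta\le d\ \text{and}\ \zeta\le\ell\}=\emptyset,$$
because a point $\zeta$ in this intersection would dominate $\lUB(p,q)$ for some $p\in P$ and $q\in Q$ with $p,q\le\zeta\le\ell$, forcing $\ord\lUB(p,q)\le\ord\zeta\le d$, hence an edge of $G$ between the two components — impossible. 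Thus, inside the box $\{\zeta\le\ell\}$ and in degrees $\le d$, the staircases $\uparrow P$ and $\uparrow Q$ are disjoint and the corresponding (box-localized) Hilbert--Samuel counts are additive; moreover both pieces contribute in degrees $d-1$ and $d$, since $\xi\in P$ and $\xi'\in Q$ have order $\le d-1$.

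It remains to turn this decomposition into the strict inequality $H_M(d)<H_M(d-1)^{\la d-1\ra}$, and this is where I expect the real work to be. The separation lemma is purely local to the box $\{\zeta\le\ell\}$, whereas $H_M$ is a global count, so one cannot simply add the Macaulay bounds for $P$ and $Q$; one also has to be careful with the ``missing'' element $\lUB(\xi,\xi')$, whose order is $\ge d+1$, so that it contributes nothing to $\uparrow M$ in degrees $\le d$ and the defect forced by the disconnection must already be visible at degree $d$ rather than only beyond it. The natural route I would pursue is: (i) use Gotzmann persistence (Theorem \ref{Gotzmann}) to propagate the equality $H_M(d)=H_M(d-1)^{\la d-1\ra}$ to all degrees $\ge d-1$, and to see that global maximal growth forces a corresponding maximal-growth condition for the restriction of $\uparrow M$ to the box $\{\zeta\le\ell\}$; (ii) in that box apply the bounded-poset form of Macaulay's theorem (Clements--Lindstr\"om) and, crucially, its equality case; and (iii) observe that this equality case is incompatible with $\uparrow M$ restricting, in degrees $d-1$ and $d$, to a disjoint union of two nonempty pieces — which is exactly the situation produced by the separation lemma. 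Assembling (i)--(iii), with the bookkeeping of $\lUB(\xi,\xi')$ carried out carefully, yields the contradiction and hence the theorem; step (iii), and the transfer of maximal growth into the box in step (i), are the points I would expect to demand the most care.
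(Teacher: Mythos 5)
First, a note on the premise: the paper does not prove Theorem~\ref{HS} at all --- it is imported as \cite[Corollary 4.9]{GO} and simply quoted, so there is no in-paper proof against which to compare. Judged on its own terms, your proposal has a genuine gap, and you flag it yourself. The reductions are all fine: passing to the minimal elements of $M$ is harmless; discarding elements of order $>d$ is harmless; and if $M$ had a minimal element of order exactly $d$, deleting it would raise $H_M(d)$ by one while leaving $H_M(d-1)$ fixed, violating Macaulay's bound, so none exist. The separation lemma is also correct: if no admissible sequence joins $\xi$ to $\xi'$, then in the graph $G$ on $\{\eta\in M:\eta\le\ell\}$ (where $\ell=\lUB(\xi,\xi')$, which after normalization is not in $M$ since $\ord\ell\ge d+1$) the vertices $\xi$ and $\xi'$ lie in different components $P$ and $Q$, and any $\zeta\le\ell$ of order $\le d$ lying above both $P$ and $Q$ would produce an edge joining them.

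What is missing is everything that converts this separation into the strict inequality $H_M(d)<H_M(d-1)^{\la d-1\ra}$. Your outline (i)--(iii) is stated but not argued, and each step is substantial. Step (i) is the most worrying: Gotzmann persistence (with the normalization you have set up) does propagate $H_M(s+1)=H_M(s)^{\la s\ra}$ to all $s\ge d-1$, but that is a statement about the \emph{global} count on $\N^m$, and it is not clear --- and certainly not shown --- that it induces a corresponding maximal-growth statement for the restriction of $\uparrow M$ to the finite box $\{\zeta\le\ell\}$. The complement of the box also contributes to $H_M$, and there is no clean additivity between the inside and outside of the box that would let you localize the hypothesis. Step (iii) is likewise an unproven structural claim about the equality case of the Clements--Lindstr\"om bound that would need its own argument. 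As it stands, the proposal is a plausible strategy with a correct initial setup, but the crux of the proof --- exactly the part you identify as ``the real work'' --- is not carried out, so this does not yet constitute a proof of the theorem.
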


In Section \ref{several} we will make use of the following series of technical (combinatorial) results on Macaulay's function. They are useful to deal with Hilbert-Samuel functions in several copies of $\N^m$; which will appear in Theorem \ref{main}. 
%Given two nonnegative integers $a$ and $b$, with $a\geq b$, we set
%$$\binom{a}{b}=\frac{a!}{(a-b)!\cdot b!}$$
%In case $a<b$ we set $\binom{a}{b}=0$. 
It is worth recalling at this point that for $c$ and $d$ positive integers we have the classical binomial identity
\begin{equation}\label{bin11}
\binom{c+d}{d}=\binom{c-1+d}{d}+\binom{c-1+d}{d-1}.
\end{equation}

%Given positive integers $a$ and $d$, there is a unique way to express $a$ in the form 
%\begin{equation}\label{rep2}
%a=\binom{k_d}{d}+\binom{k_{d-1}}{d-1}+\cdots\binom{k_j}{j}
%\end{equation}
%with $k_d>k_{d-1}>\cdots >k_j\geq j\geq 1$. We call the $k_d,\dots,k_j$ the (Macaulay) $d$-binomial representation of $a$. Macaulay's function $*^{\l d \r}:\mathbb N\to \mathbb N$ is defined as
%$$a^{\l d\r}=\binom{k_d+1}{d+1}+\binom{k_{d-1}+1}{d-1+1}+\cdots\binom{k_j+1}{j+1}$$
%where $k_d,\dots,k_j$ is the $d$-binomial representation of $a$. We set $0^{\l d\r}=0$. 

In what follows $d$ denotes a positive integer. Note that, by  \eqref{bin11} and \eqref{rep22}, we can always write $a>0$ uniquely in the form 
\begin{equation}\label{form11}
a=\binom{c-1+d}{d}+A \quad \text{ with } 0\leq A < \binom{c-1+d}{d-1}
\end{equation}
for some $c>0$. It follows from the definition that
$$a^{\la d\ra}=\binom{c+d}{d+1}+A^{\la d-1\ra}$$

Moreover, we can also write $a$ uniquely in the form
\begin{equation}\label{form22}
a=\binom{c-1+d}{d}+A \quad \text{ with } 0< A \leq \binom{c-1+d}{d-1}
\end{equation}
for some $c\geq 0$. Indeed, by \eqref{form11}, we only need to consider the case when $a=\binom{b-1+d}{d}$ for some $b> 0$. But now, by \eqref{bin11}, we can write
$$a=\binom{b-2+d}{d}+\binom{b-2+d}{d-1}$$
obtaining the desired shape (with $c=b-1$). (Note that in may happen that $b=1$ in which case the first term on the left-hand-side becomes $\binom{d-1}{d}:=0$.) Also, by \eqref{bin11}, in this case we get the identity
$$a^{\la d\ra}=\binom{b-1+d}{d+1}+\binom{b-1+d}{d}.$$
Therefore, regardless of the form we write $a> 0$, either as in \eqref{form11} or \eqref{form22}, we get
\begin{equation}\label{use00}
a^{\la d\ra}=\binom{c+d}{d+1}+A^{\la d-1\ra}.
\end{equation}

\begin{lemma}\label{rep}
Let $m$ and $d$ be a positive integers. If $a$ and $b$ are nonnegative integers, we have
\begin{enumerate}
\item $a^{\la d\ra}+b^{\la d\ra}\leq (a+b)^{\la d\ra} $
\item If, additionally, $a$ and $b$ are at most $\binom{m-1+d}{d}$ and $a+b\leq \binom{m-1+d}{d}+c$ for some integer $c\geq 0$, then
$$a^{\la d\ra}+b^{\la d\ra}\leq \binom{m-1+d}{d}^{\la d\ra}+c^{\la d\ra}$$
Furthermore, if $a,b>0$ and $c=0$, then 
$$a^{\la d\ra}+b^{\la d\ra}< \binom{m-1+d}{d}^{\la d\ra}$$
\end{enumerate}
\end{lemma}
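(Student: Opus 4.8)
The plan is to reduce both parts to the single-step identity \eqref{use00}, together with the strict monotonicity \eqref{ineq11} of Macaulay's function, and then induct on $d$. For part (1), write $a$ and $b$ in the form \eqref{form11}, say $a=\binom{c-1+d}{d}+A$ and $b=\binom{c'-1+d}{d}+B$ with $0\le A<\binom{c-1+d}{d-1}$ and $0\le B<\binom{c'-1+d}{d-1}$; without loss of generality $c\le c'$. Using \eqref{use00} we have $a^{\la d\ra}+b^{\la d\ra}=\binom{c+d}{d+1}+\binom{c'+d}{d+1}+A^{\la d-1\ra}+B^{\la d-1\ra}$. Now I would compare this with $(a+b)^{\la d\ra}$: the key point is that adding $a$ and $b$ "pushes" the binomial content up, and the induction hypothesis for $d-1$ controls the $A,B$ contributions. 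Concretely, one checks that $\binom{c+d}{d+1}+\binom{c'+d}{d+1}+(A+B)^{\la d-1\ra}\le (a+b)^{\la d\ra}$ directly from the definition of the $d$-binomial representation of $a+b$ (this amounts to a carry-free bookkeeping of where the binomial coefficients land), and then $A^{\la d-1\ra}+B^{\la d-1\ra}\le (A+B)^{\la d-1\ra}$ by induction on $d$. The base case $d=1$ is immediate since $a^{\la 1\ra}=\binom{a+1}{2}$ and $\binom{a+1}{2}+\binom{b+1}{2}\le\binom{a+b+1}{2}$.

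For part (2), the hypotheses $a,b\le\binom{m-1+d}{d}$ force the leading binomial term in the representation \eqref{form11} of each to have "$c$-parameter" at most $m$; that is, writing $a=\binom{c-1+d}{d}+A$ we get $c\le m$, and similarly for $b$. I would split into cases according to whether $a+b$ exceeds $\binom{m-1+d}{d}$ by using the representation \eqref{form22} for the relevant quantities so that the "overflow" $c$ appears cleanly. Writing $a+b=\binom{m-1+d}{d}+c$ and expanding $\binom{m-1+d}{d}^{\la d\ra}+c^{\la d\ra}$ via \eqref{use00}, the inequality becomes a comparison of the same shape as in part (1) but now with the extra information that one of the two binomial "heads" is pinned at the maximal value $\binom{m-1+d}{d}$; the gain over part (1) comes precisely from this pinning, which wastes no room. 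Again the $A,B$ remainders are handled by the induction hypothesis (part (2) for $d-1$, with a suitably chosen $m'$), while the binomial heads are handled by the convexity-type inequality $\binom{c-1+d}{d}^{\la d\ra}+\binom{c'-1+d}{d}^{\la d\ra}\le\binom{m-1+d}{d}^{\la d\ra}+\binom{c+c'-m-1+d}{d}^{\la d\ra}$ whenever $c,c'\le m$, which itself follows from the $d-1$ case. For the final strict inequality (when $a,b>0$ and $c=0$), I would observe that $a,b>0$ forces at least one of the remainders or heads to be strictly smaller than its maximal allotment, so one of the applications of \eqref{ineq11} is strict.

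The main obstacle I anticipate is the bookkeeping in the inductive step: the $d$-binomial representation of a sum $a+b$ does not interact additively with those of $a$ and $b$ (there can be "carries"), so the reduction $a^{\la d\ra}+b^{\la d\ra}\le (a+b)^{\la d\ra}$ is not term-by-term. The cleanest route is probably to avoid computing the representation of $a+b$ explicitly and instead argue by a "greedy" comparison: peel off the largest binomial coefficient $\binom{k}{d}$ that fits inside $a+b$, note it is at least as large as the leading term of whichever of $a,b$ is larger, and recurse on the difference; one must then verify that Macaulay's operation respects this peeling, which is exactly the content of \eqref{use00} applied repeatedly. Once that is set up, parts (1) and (2) differ only in tracking the constant $m$ through the recursion, and the strictness in the last clause falls out of the case analysis. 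I would also double-check the degenerate conventions ($\binom{d-1}{d}:=0$, $0^{\la d\ra}=0$) are consistent at each step, since these edge cases are where such inductions typically break.
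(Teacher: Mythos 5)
Your sketch correctly identifies the central ingredients (the single-step identity \eqref{use00}, strict monotonicity \eqref{ineq11}, and induction on $d$), and you are right that the whole difficulty lives in how $(a+b)^{\la d\ra}$ interacts with the separate $d$-binomial representations of $a$ and $b$. But that is precisely the step you leave unproved: the claimed inequality $\binom{c+d}{d+1}+\binom{c'+d}{d+1}+(A+B)^{\la d-1\ra}\le (a+b)^{\la d\ra}$ is not ``direct from the definition'' -- it is exactly the content of the lemma, and your own ``main obstacle'' paragraph concedes that the representations do not add term-by-term because of carries. Likewise the convexity-type inequality you invoke for part (2), namely $\binom{c-1+d}{d}^{\la d\ra}+\binom{c'-1+d}{d}^{\la d\ra}\le\binom{m-1+d}{d}^{\la d\ra}+\binom{c+c'-m-1+d}{d}^{\la d\ra}$, is asserted rather than deduced, and the way it would ``follow from the $d-1$ case'' is not spelled out. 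As written, both parts bottom out in an unverified claim, so this is a genuine gap rather than a stylistic divergence.

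The paper's proof avoids this entirely by never forming the $d$-binomial representation of $a+b$. Instead it defines a one-step \emph{mass transfer} on pairs: from $(a,b)$ (with $a\ge b>0$, $a$ written as in \eqref{form11} and $b$ as in \eqref{form22}) it constructs $(a_1,b_1)$ with the same sum, with $b_1<b\le a<a_1$, and with $a^{\la d\ra}+b^{\la d\ra}\le a_1^{\la d\ra}+b_1^{\la d\ra}$; the two cases (whether or not $A+B$ overflows $\binom{s-1+d}{d-1}$) are handled using \eqref{use00} and the induction hypothesis at level $d-1$ applied only to the small remainders $A,B$. Iterating the transfer, the pair passes through $\left(\binom{m-1+d}{d},c\right)$ and terminates at $(a+b,0)$, which yields parts (1) and (2) in one stroke; the strict inequality when $a,b>0$ and $c=0$ is read off from the case $e=0$ of the transfer. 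Your final ``greedy peeling'' paragraph points in this direction, but to turn it into a proof you would need to make the peeling a pair operation preserving the sum (so you never need the representation of $a+b$), verify the monotonicity $a^{\la d\ra}+b^{\la d\ra}\le a_1^{\la d\ra}+b_1^{\la d\ra}$ at each step via \eqref{use00} and the inductive hypothesis, and check that the iteration really does pass through the two target pairs $\left(\binom{m-1+d}{d},c\right)$ and $(a+b,0)$. Without that, the argument does not close.
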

\begin{proof}
We assume $a\geq b$. By \eqref{ineq11}, we may also assume that $b>0$ and, in (2), that 
$$a<\binom{m-1+d}{d} \quad \text{ and }\quad  a+b= \binom{m-1+d}{d}+c.$$

We proceed by induction on $d$. For the base case, $d=1$, note that 
$$a^{\la 1\ra}=\frac{a(a+1)}{2}.$$ 
Thus,
$$a^{\la 1\ra}+b^{\la 1\ra}\leq \frac{a(a+1)+b(b+1)+2ab}{2}=\frac{(a+b)(a+b+1)}{2}=(a+b)^{\la 1\ra}$$
To prove (2) in this case write $m=a+i=b+j$ with $i,j\geq 0$, then
$$
m c=(a+i)(a-j)=a(a+i-j)-ij=ab-ij\leq ab,
$$
and so
$$a^{\la 1\ra}+b^{\la 1\ra}=\frac{(a+b)^2-2ab+a+b}{2}\leq \frac{(m+c)^2-2mc+m+c}{2}=m^{\la 1\ra}+c^{\la 1\ra}$$
For the 'furthermore' clause, note that if $c=0$ then $-2ab<-2mc$, and hence the above inequality becomes strict in this case.

We now assume $d>1$. By \eqref{form11} and $\eqref{form22}$ we can write
$$a=\binom{s-1+d}{d}+A \quad \text{ with } 0\leq A < \binom{s-1+d}{d-1}$$
where $s>0$ and 
$$b=\binom{t-1+d}{d}+B \quad \text{ with } 0< B \leq \binom{t-1+d}{d-1}$$
where $t\geq 0$. We now construct new integers $a_1$ and $b_1$ such that
\begin{equation}\label{one}
0\leq b_1< b\leq a<a_1\leq \binom{s+d}{d}
\end{equation}
\begin{equation}\label{two}
a+b=a_1+b_1
\end{equation}
\begin{equation}\label{three}
a^{\la d\ra}+b^{\la d\ra}\leq a_1^{\la d\ra}+b_1^{\la d\ra}
\end{equation}
We consider two cases:

\smallskip
\noindent  {\bf Case 1:} $A+B<\binom{s-1+d}{d-1}$. In this case we set 
$$a_1=\binom{s-1+d}{d}+A+B\quad \text{ and }\quad b_1=\binom{t-1+d}{d}$$
Clearly \eqref{one} and \eqref{two} are satisfied. By \eqref{use00}, we have
$$a^{\la d\ra}=\binom{s+d}{d+1}+A^{\la d-1\ra}\quad \text{ and }\quad b^{\la d\ra}=\binom{t+d}{d+1}+B^{\la d-1\ra}$$
and 
$$a_1^{\la d\ra}=\binom{s+d}{d+1}+(A+B)^{\la d-1\ra}\quad \text{ and }\quad b_1^{\la d\ra}=\binom{t+d}{d+1}$$
Since by induction $A^{\la d-1\ra}+B^{\la d-1\ra}\leq (A+B)^{\la d-1\ra}$, from the above equalities we get
$$a^{\la d\ra}+b^{\la d\ra}\leq a_1^{\la d\ra}+b_1^{\la d\ra}$$
This yields \eqref{three}.

\smallskip
\noindent {\bf Case 2:} $A+B=\binom{s-1+d}{d-1}+e$ for some $e\geq 0$. In this case we set
$$a_1=\binom{s+d}{d} \quad \text{ and }\quad b_1=\binom{t-1+d}{d}+e$$
We clearly have \eqref{one}. By \eqref{bin11}, we have 
$$a_1+b_1=\binom{s+d-1}{d}+\binom{s+d-1}{d-1}+b+A-\binom{s-1+d}{d-1}=a+b$$
This yields \eqref{two}. Now, since $b\leq a$, we have $t\leq s$, and so $B< \binom{s-1+d}{d-1}$. By induction we have
$$A^{\la d-1\ra}+B^{\la d-1\ra}\leq \binom{s-1+d}{d-1}^{\la d-1\ra}+e^{\la d-1\ra}$$
On the other hand, by \eqref{use00} we have
$$a_1^{\la d\ra}=\binom{s+d+1}{d+1} \quad \text{ and }\quad  b_1^{\la d\ra}=\binom{t+d}{d+1}+e^{\la d-1\ra}$$
where the second equality uses the fact that $0\leq e<\binom{t-1+d}{d-1}$. The above two displays, together with \eqref{bin11} and \eqref{use00}, yield
\begin{align*}
a^{\la d\ra}+b^{\la d\ra} & =\binom{s+d+1}{d+1}-\binom{s+d}{d}+\binom{t+d}{d+1}+A^{\la d-1\ra}+B^{\la d-1\ra} \\
&\leq a_1^{\la d\ra} -\binom{s+d}{d}+b_1^{\la d\ra} +\binom{s-1+d}{d-1}^{\la d-1\ra} \\
& =a_1^{\la d\ra}+b_1^{\la d\ra}
\end{align*}
and hence we obtain \eqref{three}. We observe that when $e=0$, we must have $A,B>0$, and so, by induction, we must have $A^{\la d-1\ra}+B^{\la d-1\ra}<\binom{s-1+d}{d-1}^{\la d-1 \ra}$. Using this, the inequality in the second line of the above display becomes strict; that is, in this case we obtain
\begin{equation}\label{strict}
a^{\la d\ra}+b^{\la d\ra}< a_1^{\la d\ra}+b_1^{\la d\ra}.
\end{equation}

Iterating this construction, obtaining $(a_{i+1},b_{i+1})$ from $(a_i,b_i)$ satisfying \eqref{one} to  \eqref{three}, one eventually finds $\ell_1\leq \ell_2$ such that $a_{\ell_1}=\binom{m-1+d}{d}$ (which implies $b_{\ell_1}=c$) and $a_{\ell_2}=a+b$ (which implies $b_{\ell_2}=0$). This, of course, shows that
$$a^{\la d\ra}+b^{\la d\ra}\leq a_{\ell_1}^{\la d\ra}+b_{\ell_1}^{\la d\ra}=\binom{m-1+d}{d}^{\la d\ra}+c^{\la d\ra}$$
 and
$$a^{\la d\ra}+b^{\la d\ra}\leq a_{\ell_2}^{\la d\ra}+b_{\ell_2}^{\la d\ra}=(a+b)^{\la d\ra}.$$
For the 'furthermore' clause note that if $c=0$ then $b_{\ell_1}=0$, and, by \eqref{strict}, we have
$$a^{\la d\ra}+b^{\la d\ra}<a_{\ell_1}^{\la d\ra}=\binom{m-1+d}{d}^{\la d\ra}.$$
\end{proof}

The above lemma yields the following (which will be used in Theorem~\ref{main}).

\begin{lemma}\label{technical}
Let $m$ and $d$ be positive integers. Suppose $a_1\leq \cdots \leq a_t$ and $b_1, \dots, b_s$ are sequences of nonnegative integers such that 
$$b_1\leq b_2= \cdots =b_s=\binom{m-1+d}{d}$$
and $b_s\geq a_i$ for all $i\leq t$. If $a_1+\cdots+a_t\leq b_1+\cdots+b_s$, then
$$a_1^{\langle d\rangle}+\cdots+a_t^{\langle d \rangle}\leq b_1^{\langle d\rangle}+\cdots+b_s^{\langle d\rangle}.$$
\end{lemma}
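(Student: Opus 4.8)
The plan is to derive this from Lemma~\ref{rep} by an induction on $t$, in which we repeatedly ``consolidate'' the two largest terms of the first sequence while, when necessary, deleting one copy of $M:=\binom{m-1+d}{d}$ from the second sequence. Throughout, the hypotheses read $a_i\le M$ for all $i$, $b_1\le b_2=\cdots=b_s=M$ whenever $s\ge 2$, and $\sum_i a_i\le\sum_j b_j$.

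First I would dispose of two easy situations. If $t\le 1$, the inequality reduces to $a_1^{\langle d\rangle}\le b_s^{\langle d\rangle}$ (and is vacuous for $t=0$), which holds because $a_1\le b_s$ by hypothesis, by \eqref{ineq11}, and because $b_j^{\langle d\rangle}\ge 0$ for the remaining $j$. If $s=1$, then $\sum_i a_i\le b_1$, so iterating Lemma~\ref{rep}(1) gives $\sum_i a_i^{\langle d\rangle}\le(\sum_i a_i)^{\langle d\rangle}$, and then \eqref{ineq11} gives $(\sum_i a_i)^{\langle d\rangle}\le b_1^{\langle d\rangle}$. Hence we may assume $t\ge 2$ and $s\ge 2$, so that $b_s=M\ge a_i$ for every $i$.

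For the inductive step I would split on the size of $a_{t-1}+a_t$. If $a_{t-1}+a_t\le M$, put $a'=a_{t-1}+a_t\le M$; then the sequence $a_1,\dots,a_{t-2},a'$ (relabelled in nondecreasing order) has $t-1$ terms, each $\le M=b_s$, the same sum, and the same second sequence, so the inductive hypothesis gives $\sum_{i\le t-2}a_i^{\langle d\rangle}+(a')^{\langle d\rangle}\le\sum_j b_j^{\langle d\rangle}$, and adding $a_{t-1}^{\langle d\rangle}+a_t^{\langle d\rangle}\le(a')^{\langle d\rangle}$ (Lemma~\ref{rep}(1)) finishes this case. If $a_{t-1}+a_t>M$, put $c=a_{t-1}+a_t-M$, so $0<c\le a_{t-1}\le M$. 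Applying Lemma~\ref{rep}(2) to $a_t,a_{t-1}\le M$ with $a_t+a_{t-1}=M+c$ yields $a_{t-1}^{\langle d\rangle}+a_t^{\langle d\rangle}\le M^{\langle d\rangle}+c^{\langle d\rangle}=b_s^{\langle d\rangle}+c^{\langle d\rangle}$. Now I replace $a_{t-1},a_t$ by the single term $c$ and delete one copy of $b_s$ from the second sequence; the resulting instance has $t-1$ left-hand terms, all $\le M$, it has $s-1$ right-hand terms of the shape $b_1\le b_2=\cdots=b_{s-1}=M$ (and if $s-1=1$ it is covered by the $s=1$ case above), and its totals still compare correctly since $\sum_{i\le t-2}a_i+c=\sum_i a_i-M\le\sum_j b_j-M$; so induction gives $\sum_{i\le t-2}a_i^{\langle d\rangle}+c^{\langle d\rangle}\le\sum_{j\le s-1}b_j^{\langle d\rangle}$, and adding the previous display and cancelling $c^{\langle d\rangle}$ gives the claim.

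All the genuine combinatorial content already sits in Lemma~\ref{rep}; the work here is essentially a repackaging of its two parts. I expect the only delicate point to be the bookkeeping: one must check that each reduction preserves every hypothesis of the lemma — in particular that the new maximal element of the second sequence still dominates every left-hand term (which is immediate, since all new left-hand terms are $\le M$) and that the degenerate cases $s=1$ and the passage from $s=2$ to $s=1$ are absorbed by the direct argument. This verification, rather than any inequality, will be the main obstacle.
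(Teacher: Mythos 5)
Your argument is correct. It is genuinely different in organization from the paper's proof, though both ultimately reduce everything to the two parts of Lemma~\ref{rep}. The paper inducts on the pair $(t,s)$ lexicographically and works from the \emph{small} end: it first splits on whether $b_1\ge a_1$, subtracting $a_1$ from $b_1$ when possible, and in the harder case $b_1<a_1$ it splits further on whether $\sum a_i\le b_2+\cdots+b_s$, eventually transferring the surplus $a_2+\cdots+a_t-b_3-\cdots-b_s$ into a new small right-hand term. You instead induct on $t$ alone and work from the \emph{large} end, merging $a_{t-1},a_t$ into a single term (Lemma~\ref{rep}(1)) when their sum stays below $M=\binom{m-1+d}{d}$, and otherwise using Lemma~\ref{rep}(2) to ``trade'' $a_{t-1}+a_t$ for $M+c$ and simultaneously delete one copy of $M$ from the right side. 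The payoff of your version is a cleaner induction (on a single parameter, with every reduction strictly decreasing $t$) and slightly less delicate bookkeeping; the paper's version, by carrying $b_1$ forward, keeps the right-hand sequence in exactly the format of the hypothesis at every step, whereas you rely on the observation that the $s=1$ case needs only the sum inequality and not the domination condition $b_s\ge a_i$ — a point you correctly flagged and which is the one place where a careless reader could worry. Both are sound; your formulation is arguably the tidier of the two.
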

\begin{proof}
We proceed by induction on $(t,s)$ using the lexicographic order. By \eqref{ineq11}, if $a\leq b$ then $a^{\langle d\rangle}\leq b^{\langle d\rangle}$. The case $t=1$ follows from this observation. By Lemma~\ref{rep}, $a^{\langle d\rangle}+b^{\langle d\rangle}\leq (a+b)^{\langle d\rangle}$, and the case $s=1$ follows from this. Thus, we assume that $r,s>1$. We now consider two cases:

\medskip

\noindent \underline{Case 1.} Suppose $b_1\geq a_1$. Then the sequences $a_2\leq \cdots\leq a_{t}$ and $b_1-a_1\leq b_2\leq \cdots \leq b_s$ satisfy our hypothesis. By induction, 
 $$a_2^{\langle d\rangle}+\cdots+a_{t}^{\langle d\rangle} \leq (b_1-a_1)^{\langle d\rangle}+b_{2}^{\langle d\rangle}+\cdots + b_s^{\langle d\rangle}.$$
Using that $a_1^{\langle d\rangle}+(b_1-a_1)^{\langle d\rangle}\leq b_1^{\langle d\rangle}$, which follows from Lemma~\ref{rep}, we get the desired inequality for the original sequences.

\medskip

\noindent \underline{Case 2.} Suppose $b_1< a_1$. When $s=2$, we must have that $a_2+\cdots +a_{t}\leq b_2$, and so
$$a_1^{\langle d\rangle}+\cdots+a_t^{\langle d\rangle}\leq a_1^{\langle d\rangle} +(a_2+\cdots +a_{t})^{\langle d\rangle} \leq b_1^{\langle d\rangle}+b_2^{\langle d\rangle},$$
where the first inequality follows from part (1) of Lemma~\ref{rep} and the second from part (2). So we assume that $s>2$. If it happens that $a_1+\cdots+a_{t}\leq b_2+\cdots+b_{s}$, then we are done by induction. So we can assume that
\begin{equation}\label{cas}
a_2+\cdots+a_{t} > b_3+\cdots+b_{s}.
\end{equation}
We have that
$$b_{1} + b_2\geq a_1 +\left(a_2+\cdots +a_{t}-b_3-\cdots -b_{s}\right).$$
It follows from Lemma~\ref{rep}, using \eqref{cas}, that
$$b_{1}^{\langle d\rangle} +b_2^{\langle d\rangle} \geq a_1^{\langle d\rangle} + \left(a_2+\cdots +a_{t}-b_3-\cdots -b_{s}\right)^{\langle d\rangle}.$$
Thus, it suffices to see that 
$$b_3^{\langle d\rangle}+\cdots + b_{s}^{\langle d\rangle} + \left(a_2+\cdots +a_{t}-b_3-\cdots -b_{s}\right)^{\langle d\rangle}\geq a_2^{\langle d\rangle}+\cdots +a_{t}^{\langle d\rangle},$$
but this follows by induction.
\end{proof}

Lastly, we we will use in Theorem \ref{main} the following consequence of the above two lemmas.

\begin{corollary}\label{techcon}
Let $m$ and $d$ be positive integers. Suppose $a_1\leq \cdots \leq a_t$ and $b_1,\dots, b_s$ are sequences of positive integers such that
$$b_1=\cdots=b_{s}=\binom{m-1+d}{d}$$
and $b_s\geq a_i$ for all $i\leq t$. If $a_1+\cdots+a_t\leq b_1+\cdots+b_s$ and
\begin{equation}\label{bineq21}
a_1^{\la d\ra}+\cdots+a_t^{\la d\ra}=b_1^{\la d\ra}+\cdots+ b_s^{\la d\ra},
\end{equation}
then $s=t$ and $a_i=b_i$ for all $i$.
\end{corollary}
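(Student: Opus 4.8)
The plan is to deduce Corollary~\ref{techcon} from Lemma~\ref{rep} and Lemma~\ref{technical} by a careful tracking of the equality case. First I would observe that the hypotheses of Lemma~\ref{technical} are met (here $b_1=\cdots=b_s=\binom{m-1+d}{d}$ is a special instance of the sequence allowed there), so the inequality $a_1^{\la d\ra}+\cdots+a_t^{\la d\ra}\le b_1^{\la d\ra}+\cdots+b_s^{\la d\ra}$ holds automatically, and \eqref{bineq21} asserts it is in fact an equality. The strategy is then to re-run the induction in the proof of Lemma~\ref{technical} and show that equality forces every inequality used along the way to be an equality; the ``furthermore'' clause of Lemma~\ref{rep}(2) (the strict inequality $a^{\la d\ra}+b^{\la d\ra}<\binom{m-1+d}{d}^{\la d\ra}$ when $a,b>0$) is precisely the tool that will rule out the nontrivial configurations.

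Concretely, I would argue by induction on $(t,s)$ in the lexicographic order, mirroring the case split of Lemma~\ref{technical}. If $t=1$: then $a_1^{\la d\ra}=b_1^{\la d\ra}+\cdots+b_s^{\la d\ra}$ with $a_1\le b_1=\binom{m-1+d}{d}$; if $s\ge 2$ then by Lemma~\ref{rep}(1) and monotonicity $b_1^{\la d\ra}+b_2^{\la d\ra}\le(b_1+b_2)^{\la d\ra}$ is strict unless... — here I would instead note directly that $a_1^{\la d\ra}\le b_1^{\la d\ra}<b_1^{\la d\ra}+b_2^{\la d\ra}\le\sum b_i^{\la d\ra}$ since all $b_i>0$ give $b_i^{\la d\ra}>0$, a contradiction; so $s=1$ and $a_1^{\la d\ra}=b_1^{\la d\ra}$ forces $a_1=b_1$ by \eqref{ineq11}. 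For the inductive step with $t,s>1$, I split as in Lemma~\ref{technical} into the case $b_1\ge a_1$ and the case $b_1<a_1$. In the first case, the sequences $a_2\le\cdots\le a_t$ and $b_1-a_1\le b_2\le\cdots\le b_s$ satisfy the hypotheses of Lemma~\ref{technical}, and equality in \eqref{bineq21} combined with $a_1^{\la d\ra}+(b_1-a_1)^{\la d\ra}\le b_1^{\la d\ra}$ forces both this last inequality and the inequality for the shortened sequences to be equalities. But $a_1^{\la d\ra}+(b_1-a_1)^{\la d\ra}=b_1^{\la d\ra}=\binom{m-1+d}{d}^{\la d\ra}$ with $a_1,b_1-a_1\le\binom{m-1+d}{d}$ and sum equal to $\binom{m-1+d}{d}$: by the ``furthermore'' clause of Lemma~\ref{rep}(2) this is impossible unless one of $a_1,\,b_1-a_1$ is zero; since the $a_i$ are \emph{positive}, we must have $b_1-a_1=0$, i.e. $a_1=b_1$, and then the shortened sequences (now $b_1-a_1=0$ dropped, leaving $b_2=\cdots=b_s=\binom{m-1+d}{d}$ of length $s-1$) satisfy the corollary's hypotheses with parameters $(t-1,s-1)$, so induction gives $s-1=t-1$ and $a_i=b_i$ for $i\ge 2$.

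In the second case $b_1<a_1$, I trace the proof of Case~2 of Lemma~\ref{technical}. If $s=2$, then $a_2+\cdots+a_t\le b_2$ and the displayed chain uses Lemma~\ref{rep}(1) then (2); equality throughout forces $a_1^{\la d\ra}+(a_2+\cdots+a_t)^{\la d\ra}=b_1^{\la d\ra}+b_2^{\la d\ra}=2\binom{m-1+d}{d}^{\la d\ra}$ with both summands on the left at most $\binom{m-1+d}{d}$ and sum $2\binom{m-1+d}{d}$, which by Lemma~\ref{rep}(2) forces $a_1=a_2+\cdots+a_t=\binom{m-1+d}{d}=b_1$, contradicting $b_1<a_1$ — so this sub-case cannot occur. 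If $s>2$: should $a_1+\cdots+a_t\le b_2+\cdots+b_s$ hold, induction on $(t,s-1)$ (after checking the remaining hypotheses, in particular that \eqref{bineq21} would then have to be an equality on these shorter data, which follows since the dropped term $b_1^{\la d\ra}>0$ together with a strict inequality would contradict the overall equality — here I would argue $a_1^{\la d\ra}+\cdots+a_t^{\la d\ra}\le b_2^{\la d\ra}+\cdots+b_s^{\la d\ra}<b_1^{\la d\ra}+\cdots+b_s^{\la d\ra}$, impossible) — so actually this alternative is itself excluded, forcing \eqref{cas}; then the chain of inequalities in Lemma~\ref{technical} must all be equalities, and the first of them, $b_1^{\la d\ra}+b_2^{\la d\ra}\ge a_1^{\la d\ra}+(a_2+\cdots+a_t-b_3-\cdots-b_s)^{\la d\ra}$, becomes an equality of the form $2\binom{m-1+d}{d}^{\la d\ra}=a_1^{\la d\ra}+X^{\la d\ra}$ with $a_1,X\le\binom{m-1+d}{d}$ and sum $2\binom{m-1+d}{d}$ (using $b_1+b_2=a_1+X$ from the displayed identity before \eqref{cas}), so by Lemma~\ref{rep}(2) $a_1=\binom{m-1+d}{d}=b_1$, again contradicting $b_1<a_1$. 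Hence Case~2 never arises, and the induction closes with $s=t$ and $a_i=b_i$ throughout.

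The main obstacle I anticipate is the bookkeeping in the inductive step: one must verify at each recursion that equality in \eqref{bineq21} genuinely descends to equality for the shortened sequences (not merely an inequality), and this requires the positivity of the dropped $b_i^{\la d\ra}$ terms together with Lemma~\ref{technical} to pin things down — the ``wrong'' case in the split must be eliminated rather than handled. The essential quantitative input is always the strict inequality in the ``furthermore'' clause of Lemma~\ref{rep}(2): it is what converts ``$b_1<a_1$ is possible'' into a contradiction, and without it the corollary would be false (e.g. $a$ could be redistributed among fewer, larger blocks). A minor subtlety is the degenerate reading of binomial coefficients like $\binom{d-1}{d}:=0$ flagged in the surrounding text, which I would simply inherit from the conventions already fixed before Lemma~\ref{rep}.
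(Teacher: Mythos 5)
Your proof is correct in its main thrust, and it takes a genuinely different route from the paper's. The paper argues by induction on $t$ alone, pivots on whether $a_t=b_s$, and in the hard case $a_t<b_s$ picks $a_t$ \emph{maximal} among all admissible sequences, then modifies the pair $(a_1,a_t)$ to $(\alpha_1,\alpha_t)$ exactly as in the proof of Lemma~\ref{rep}, deriving a contradiction in two sub-cases. Your argument instead peels off the \emph{smallest} element $a_1$ against the first $b_1$, observes that the chain $\sum a_j^{\la d\ra}\leq a_1^{\la d\ra}+(b_1-a_1)^{\la d\ra}+\sum_{j\geq 2}b_j^{\la d\ra}\leq \sum b_j^{\la d\ra}$ collapses to equalities, and invokes the strict ``furthermore'' clause of Lemma~\ref{rep}(2) to force $a_1=b_1=\binom{m-1+d}{d}$. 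This is cleaner: it avoids the maximality device entirely and makes visible exactly where the strict inequality of Lemma~\ref{rep} earns its keep. Two small remarks. First, your Case~$b_1<a_1$ is vacuous from the outset: in the corollary all $b_j$ equal $b_s$, and $b_s\geq a_i$ is a hypothesis, so $b_1\geq a_1$ always; the long tracing of Case~2 of Lemma~\ref{technical} is unnecessary. Second, there is a minor gap in the inductive bookkeeping: your base case only covers $t=1$, while the descent $(t,s)\mapsto(t-1,s-1)$ could in principle land on $(t',1)$ with $t'>1$, which neither your base case nor your inductive step ($t,s>1$) addresses. The cleanest fix is to note that once $a_1=b_1=\binom{m-1+d}{d}$, the ordering $a_1\leq\cdots\leq a_t\leq b_s=\binom{m-1+d}{d}$ forces \emph{all} $a_i=\binom{m-1+d}{d}$, so the sum and Macaulay-sum equalities immediately give $t=s$ and $a_i=b_i$ with no further recursion; alternatively, show directly (via Lemma~\ref{rep}(1) and the ``furthermore'' clause, exactly as in your $t=1$ analysis) that the hypotheses are unsatisfiable when $s=1$ and $t>1$. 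With either patch, the argument is complete and, to my mind, a bit more transparent than the paper's version.
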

\begin{proof}
Lemma \ref{technical}, together with \eqref{bineq21}, imply that we must have 
\begin{equation}\label{bineq20}
a_1+\cdots+a_t= b_1+\cdots+b_s.
\end{equation}
%Now, note that the case $m=1$ is trivial, so we assume $m>1$. 
We proceed by induction on $t$. The case $t=1$ is obvious. Now assume $t>1$. If $a_t=b_s$, then we are done by induction. Thus assume, towards a contradiction, that $a_t<b_s$. Furthermore, we may also assume that $a_t$ is the largest integer with $a_t<b_s$ and for which there is a sequence $a_1\leq \cdots\leq a_t$ satisfying \eqref{bineq21} and \eqref{bineq20}.

%We proceed by induction on $t$. The case $t=1$ is obvious. Now assume $t>1$. If $a_t=b_s$, then we are done by induction. Thus assume, towards a contradiction, that $a_t<b_s$. Moreover, we may assume that $a_t$ is the largest such for which there is a sequence $a_1\leq \cdots\leq a_t$ with the given properties. 

Just as we did in the proof of Lemma \ref{rep}, we can find $\alpha_1$ and $\alpha_t$ such that 
\begin{equation}\label{on}
0\leq \alpha_1< a_1 \leq a_t<\alpha_t\leq \binom{m-1+d}{d}
\end{equation}
\begin{equation}\label{tw}
\alpha_1+\alpha_t=a_1+a_t
\end{equation}
\begin{equation}\label{th}
a_1^{\la d\ra}+a_t^{\la d\ra}\leq \alpha_1^{\la d\ra}+\alpha_t^{\la d\ra}
\end{equation}
Hence, $\alpha_1+a_2+\dots+a_{t-1}+\alpha_t=b_1+\cdots+ b_s$, and by Lemma \ref{technical}
$$a_1^{\la d\ra}+\cdots+a_t^{\la d\ra}\leq \alpha_1^{\la d\ra}+a_2^{\la d\ra}+\cdots+a_{t-1}^{\la d\ra}+\alpha_t^{\la d\ra}\leq b_1^{\la d\ra}+\cdots+ b_s^{\la d\ra}$$
and so, by our assumption, all the inequalities become equality. We now consider two cases:

\medskip 

\noindent  {\bf Case 1:} $\alpha_1>0$. In this case, by our assumption on $a_t$ and since $\alpha_t>a_t$, we must have $s=t$ and $\alpha_1=a_2=\cdots=a_{t-1}=\alpha_t=b_s$. Thus, $a_1+a_t=b_1+b_s$. But this is impossible as $a_1\leq a_t< b_s=b_1$. So we have reached the desired contradiction.

\medskip

\noindent {\bf Case 2:} $\alpha_1=0$. In this case, by induction on $t$, we must have $s=t-1$ and $a_2=\cdots=a_{t-1}=\alpha_t=b_s$. Thus, $a_1+a_t=b_s$ and $a_1^{\la d\ra}+a_t^{\la d\ra}=b_s^{\la d\ra}$. However, since $a_1,a_t>0$, this is impossible by the furthermore clause of part (2) of Lemma~\ref{rep}.
\end{proof}

\section{An effective bound for the (standard) coefficients}\label{rough}

In this section we prove that there is a recursively computable (upper and lower) bound for all the coefficients of the Kolchin polynomial that only depends on the order of the differential system (and, of course, on the number of derivations and variables). Thus answering Question \ref{introquestion} of Kolchin's. We will use the notation set in the previous section. In particular, $(K,\D)$ is our ground differential field of characteristic zero with $m$ commuting derivations $\D=\{\d_1,\dots, \d_m\}$.

The following proposition is the key to prove the existence of the desired bound, and could be of independent interest in the general theory of differential and difference dimension polynomials~\cite{KLMP}.

\begin{proposition}\label{ontheco}
Let $E\subseteq \N^m$ and denote by $M$ the set of minimal elements of $E$ with respect to the product order $\leq$ of $\N^m$ (note that, by Dickson's lemma, $M$ is a finite set). Set $D=0$ if $M$ is empty, otherwise set
$$D=\sum_{\xi\in M}\ord\xi.$$
If we write
$$\omega_E(t)=\sum_{i=0}^m a_i\binom{t+i}{i},$$
then, for all $j=0,1,\dots,m$,
$$|a_m|+|a_{m-1}|+\cdots+|a_{m-j}|\leq D^{j}.$$
\end{proposition}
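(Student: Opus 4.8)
I would prove this by induction on $m$, peeling off one coordinate at a time, and exploiting the recursive structure of the volume polynomial $\omega_E$. The base case $m=0$ is trivial (the polynomial is the constant $0$ or $1$, depending on whether $M$ is empty, so $|a_0|\le 1 = D^0$). For the inductive step, the key is the standard ``slicing'' of $E$: for $E\subseteq\N^m$ and the last coordinate fixed at a value $k$, let $E_k\subseteq\N^{m-1}$ be the projection of $\{\xi\in E : \xi_m \le k\}$ onto the first $m-1$ coordinates (so that $\xi'\in E_k$ iff some $(\xi',j)$ with $j\le k$ lies in $E$). If $M$ has maximal last coordinate $r$ (so $r \le D$), then $E_k$ stabilizes for $k\ge r$, and for large $s$ one has the telescoping identity
$$
\omega_E(s) = \sum_{k=0}^{\infty}\bigl(\text{number of }\xi'\in\N^{m-1}\text{ with }\ord\xi'\le s-k,\ \xi'\notin E_k\text{-upset}\bigr).
$$
Because $E_k = E_r$ for all $k\ge r$, this sum has only finitely many ``distinct'' summands, and one obtains
$$
\omega_E(t) = \sum_{k=0}^{r-1}\bigl(\omega_{E_k}(t-k) - \omega_{E_r}(t-k)\bigr) + \sum_{k\ge 0}\omega_{E_r}(t-k)\big|_{\text{truncated appropriately}},
$$
which after a finite-difference manipulation expresses $\omega_E$ in terms of the $\omega_{E_k}$ for $k = 0,\dots,r$. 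This is exactly the recursion used by Kolchin and Sit; I would make it precise in the clean form $\omega_E(t) = \sum_{k=0}^{r}\omega_{E_k}(t-k) \cdot (\text{correction})$, or more usefully work with the difference operator.

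**The coefficient bookkeeping.** The cleanest route: write $\Delta p(t) = p(t)-p(t-1)$. One checks that $\Delta^{m}\omega_E$ relates to the $E_k$'s, but more directly I would use that $\binom{t+i}{i} = \binom{(t-1)+i}{i} + \binom{(t-1)+(i-1)}{i-1}$, i.e. shifting $t\mapsto t-1$ acts on standard coefficients by a unipotent transformation. Combined with Fact (the analogue of Kolchin's structure theorem for a single slice) one gets: if $\omega_{E_k}(t) = \sum_{i=0}^{m-1} a^{(k)}_i\binom{t+i}{i}$ with $D_k := \sum_{\xi'\in M_k}\ord\xi' \le D - k\cdot|M_k| \le D$ (where $M_k$ is the minimal set of $E_k$), then by induction $|a^{(k)}_m|+\dots+|a^{(k)}_{m-j}| \le D_k^{\,j} \le D^j$ for each $k$. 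The recursion expressing $\omega_E$ from the $\omega_{E_k}$, $k=0,\dots,r$ (with $r\le D$), introduces at most $D$-fold sums and a single extra ``integration'' (which raises the degree by one and converts the $a_i$ pattern accordingly). The arithmetic that needs care is that the new top coefficients $|a_m|,\dots,|a_{m-j}|$ of $\omega_E$ are bounded by a sum, over $k\le r$, of the quantities $|a^{(k)}_{m-1}|+\dots+|a^{(k)}_{m-1-(j-1)}|$ plus lower-order contributions, giving
$$
|a_m|+\cdots+|a_{m-j}| \;\le\; \sum_{k=0}^{r-1}\bigl(|a^{(k)}_{m-1}|+\cdots+|a^{(k)}_{m-j}|\bigr) \;\le\; r\cdot D^{\,j-1} \;\le\; D\cdot D^{\,j-1} \;=\; D^{\,j}.
$$
The $j=0$ case is separate and easy: $a_m = $ differential transcendence degree type quantity, which is $1$ if $M=\varnothing$ (but then $D=0$, handled directly) and $0$ otherwise, so $|a_m|\le 1 = D^0$.

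**Main obstacle.** The genuine difficulty is making the slicing recursion precise at the level of the \emph{standard coefficients} $a_i$ (not merely the leading term), and in particular controlling how the substitution $t\mapsto t-k$ redistributes mass among the $\binom{t+i}{i}$. I expect the technical heart to be a lemma of the form: if $q(t) = \sum_i c_i\binom{t+i}{i}$ and $p(t)=\sum_{k=0}^{r}q(t-k)$ suitably interpreted as the volume polynomial of the ``unsliced'' set, then $|c'_m|+\dots+|c'_{m-j}| \le \sum_{k}(\text{tail sums of }q\text{ at }t-k)$, where one must track that $q(t-k) = \sum_i c_i \sum_{\ell}(-1)^{?}\binom{k+?}{?}\binom{t+\ell}{\ell}$ — the binomial coefficients $\binom{k}{\cdot}$ appearing here are bounded by powers of $k\le D$, and the alternating signs are exactly why the bound is $D^j$ and not something larger. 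I would isolate this as a purely combinatorial lemma about numerical polynomials and finite differences, prove it by a short induction on $j$, and then the geometric/differential content reduces entirely to the slicing identity for $\omega_E$, which is classical. A secondary (minor) point to handle carefully: ensuring $r := \max\{\xi_m : \xi\in M\} \le D$ and $D_k \le D$, which both follow immediately since $\ord\xi \ge \xi_m$ for $\xi\in M$ and each minimal element of $E_k$ lifts to a minimal element of $E$ with order at least that plus $0$ — I would double-check the lifting of $M_k$ into $M$ is injective, as that is what makes $\sum D_k$-type estimates collapse to $D$.
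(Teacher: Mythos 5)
Your overall strategy (peel off the last coordinate, induct on $m$) is the same basic idea as the paper's, but the route you take to execute it --- a full $r$-fold slicing $E_0,\dots,E_r$ followed by a single induction on $m$ --- has a genuine gap, and the key inequality you write to close it is in fact false. Take $m=2$ and $M=\{(0,3)\}$, so $D=3$ and $r=3$. Then $\omega_E(t)=3\binom{t+1}{1}-3$, so for $j=2$ the left-hand side $|a_2|+|a_1|+|a_0|$ equals $6$. On the other hand $E_0=E_1=E_2=\emptyset$, each with $\omega_{E_k}(t)=\binom{t+1}{1}$, so your right-hand side $\sum_{k=0}^{r-1}\bigl(|a^{(k)}_1|+|a^{(k)}_0|\bigr)$ equals $3$, which is smaller than $6$. (The same example also exposes a secondary issue: these slices have $D_k=0$, so your inductive hypothesis would bound $|a^{(k)}_1|+|a^{(k)}_0|$ by $D_k^1=0$, yet $|a^{(k)}_1|=1$ --- the empty slice is exactly the edge case where the slice coefficients do not vanish but $D_k$ does.) The failure is precisely the obstacle you yourself flagged: the substitution $t\mapsto t-k$ for $k$ up to $r$ smears one standard coefficient of $\omega_{E_k}$ across several $a_i$'s with binomial weights $\binom{k}{\ell}$, and the alternating signs do \emph{not} produce the cancellation you hope for, so the ``lower-order contributions'' you drop are not absorbed.

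The paper avoids this problem entirely by using a \emph{two-piece} decomposition together with a \emph{double} induction on $(m,D)$. It writes $\omega_E(t)=\omega_{E_1}(t)+\omega_{E_2}(t-1)$, where $E_1$ is the $0$-slice in $\N^{m-1}$ and $E_2$ is $E$ shifted down by one in the last coordinate; the only shift is by $1$, so the coefficient manipulation is the single clean identity $a_i=b_i+c_i-c_{i+1}$ with no binomial blow-up. The crucial point is that $E_2$ contains $(v_1,\dots,v_m-1)$ among its minimal elements for any chosen $\zeta=(v_1,\dots,v_m)\in M$ with $v_m>0$, hence $D_{E_2}\le D-1$ \emph{strictly}; the descent in $D$ and the elementary estimate $D^{j-1}+(D-1)^j+(D-1)^{j-1}\le D^j$ then close the induction. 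Your full slicing cannot recover this strict decrease as set up; indeed your auxiliary claim $D_k\le D-k|M_k|$ is also false (take $m=2$, $M=\{(3,0)\}$, $k=1$: then $D_1=3>D-1=2$), although you only need $D_k\le D$, which is true. If you want to salvage the $r$-fold slicing, you must carry the $\binom{k}{\ell}$ weights explicitly and find a new combinatorial bound; the two-piece descent with double induction is the far cleaner path and is what the paper does.
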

\begin{proof}
We proceed by induction on $m$ and $D$. Note that the case $m=1$, as well as the case $D=0$, clearly holds. So we now assume $m>1$ and $D>0$. 

If $a_m\neq 0$, then, by \eqref{maxi} in Section \ref{pre}, $a_m=1$ and the other coefficients vanish, so this case also holds. So we assume $a_m=0$, and we must show that, for all $j=1,\dots,m$,
$$|a_{m-1}|+|a_{m-2}|+\cdots+|a_{m-j}|\leq D^{j}.$$

As we are assuming $a_m=0$, $M$ is nonempty (by \eqref{maxi}). Let $\zeta=(v_1,\dots,v_m)\in M$. Moreoever, since $D>0$, $\zeta$ is not zero; so, without loss of generality, we may assume that $v_m\neq 0$.

Let 
$$E_1=\{(u_1,\dots,u_{m-1})\in \N^{m-1}: (u_1,\dots,u_{m-1},0)\geq \xi \text{ for some }\xi\in E\}$$
and
$$E_2=\{(u_1,\dots,u_m)\in \N^m: (u_1,\dots,u_m+1)\geq \xi \text{ for some }\xi\in E\}.$$
Clearly, 
\begin{equation}\label{polyid}
\omega_E(t)=\omega_{E_1}(t)+\omega_{E_2}(t-1).
\end{equation}
Letting $M_i$ be the minimal elements of $E_i$ for $i=1,2$, we see that $\displaystyle \sum_{\xi\in M_1}\ord\xi\leq D$. On the other hand, the tuple $\eta=(v_1,\dots,v_m-1)$ is in $M_2$, as $\zeta\in M$ and $v_m>0$, and so, since $\ord \eta<\ord \zeta$, we have $\displaystyle \sum_{\xi\in M_2}\ord\xi\leq D-1$. Thus, by induction, if we write
$$\omega_{E_1}(t)=\sum_{i=0}^{m-1}b_i\binom{t+i}{i}$$
and 
$$\omega_{E_2}(t)=\sum_{i=0}^mc_i\binom{t+i}{i}$$
we get, for all $j=1,\dots,m$,
$$|b_{m-1}|+|b_{m-2}|+\cdots +|b_{m-j}|\leq D^{j-1}$$
and
$$|c_{m}|+|c_{m-1}|+\cdots +|c_{m-j}|\leq (D-1)^{j}.$$ 

Using the identity
$$\binom{t-1+i}{i}=\binom{t+i}{i}-\binom{t+i-1}{i-1}$$
and \eqref{polyid}, we get
$$\omega_E(t)=\sum_{i=0}^{m-1}\left(b_i+c_i-c_{i+1}\right)\binom{t+i}{i}+c_m\binom{t+m}{m}$$
Now, since $a_m=0$ we get that $c_m=0$, and so the above equality yields
$$a_{m-1}=b_{m-1}+c_{m-1}-c_m\leq 1+(D-1)= D.$$
This proves the case $j=1$. So now we assume $j\geq 2$. We get
\begin{align*}
|a_{m-1}|+\cdots+|a_{m-j}| &= |b_{m-1}+c_{m-1} -c_m|+\cdots +|b_{m-j}+c_{m-j}-c_{m-j+1}| \\
&\leq  |b_{m-1}|+\cdots+|b_{m-j}|+|c_{m-1}|+\cdots +|c_{m-j}| +|c_{m-1}|+\cdots +|c_{m-j+1}|\\
&\leq D^{j-1}+(D-1)^{j}+(D-1)^{j-1} \\
&\leq D^j.
\end{align*}
Where the last inequality holds as we are assuming $j\geq 2$ and $D\geq 1$. 
%and the fact that for integers $n\geq 2$ and $s\geq 1$ we always have
%$$s^{n-1}+(s-1)^n+(s-1)^{n-1}\leq s^n.$$
\end{proof}

%In the case $m=1$, a characteristic of $F$ has order at most $r$. This is different when $m>1$. It is easy to see that there are systems of order $r$ with characteristic sets of order $mr +m-1$. For instance, in the case $m=2$, we have that the system
%$$\d_1^2x=\d_2^rx \text{ and } \delta_1\delta_2^{r-1}x=0$$
%implies that $\delta2^{2r-1}x=0$. Now, in this case, it turns out that is the largest it can be!!

The above proposition, together with Fact~\ref{factKol}, essentially shows that, in order to produce the desired bound for the coefficients of the Kolchin polynomial, all that is missing is an upper bound for the order of a characteristic set of a prime differential ideal that depends solely on the order of the differential system. Such a bound was obtained in \cite{GO}, its recursive formula uses the Ackermann function. We recall that this (nonprimitive recursive) function is defined as $A:\mathbb N\times\mathbb N\to \mathbb N$ 
$$
A(x,y) = \begin{cases} y + 1 & \text{ if } x = 0 \\ A(x-1,1) & \text{ if } x > 0 \text{ and } y = 0 \\ A(x-1,A(x,y-1)) & \text{ if } x,y > 0. \end{cases}
$$
From the Ackermann function we define $C_{r,m}^n$, for $r\geq 0$ and $m,n>0$, recursively as follows: 
$$C_{0,m}^1=0, \quad\; C_{r,m}^1=A(m-1,C_{r-1,m}^1), \quad \text{ and } \quad C_{r,m}^n=C_{C_{r,m}^{n-1},m}^1.$$
For example, a straigthforward computation yields
$$C_{r,1}^n=r, \quad\; C_{r,2}^n=2^n r \quad \text{ and }\quad C_{r,3}^1=3(2^r-1).$$

From \cite[Proposition 6.1]{GO} we have

\begin{fact}\label{boundchar}
Let $\Sigma\subset K\{x_1,\dots,x_n\}$ be of order at most $r$ and $\P$ a prime component. Then a characteristic set for $\P$ has order at most $C_{r,m}^n$.
\end{fact}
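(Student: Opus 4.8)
The plan is to isolate the algebraic bookkeeping and then reduce everything to a combinatorial growth estimate, first in $\N^m$ and then in $\N^m\times\{1,\dots,n\}$. I would fix a Ritt--Kolchin style procedure computing a characteristic set of the prime component $\P$: starting from $\Sigma$, repeatedly autoreduce the current finite subset of $\P$ with respect to the canonical orderly ranking, and, whenever it is not yet a characteristic set of $\P$, produce a new element of $\P$ of strictly smaller rank --- either by forming a $\D$-polynomial of two elements whose leaders $\d^\xi x_i$ and $\d^\eta x_i$ are incomparable (differentiate both up to $\lUB(\xi,\eta)$ and cancel) or by pseudo-reducing a non-reduced polynomial --- discarding at each splitting the components that do not contain $\P$ (this is where primality of $\P$ is used). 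The procedure terminates by Dickson's lemma applied to the sets of leaders. The one invariant I would extract is: every leader ever produced along the branch leading to $\P$ has order at most $\ord\lUB(\xi,\eta)\le\ord\xi+\ord\eta$ for two leaders $\xi,\eta$ already present (the $\D$-polynomial case), and otherwise lies among the orders already present (the plain-reduction case). Hence it suffices to bound, purely combinatorially, the largest order that can occur.

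The combinatorial heart is the one-variable case $n=1$. There the successive leaders generate an increasing chain $E^{(0)}\subseteq E^{(1)}\subseteq\cdots$ of upward-closed subsets of $\N^m$, with $E^{(0)}$ generated by elements of order $\le r$ and each $E^{(j+1)}$ obtained from $E^{(j)}$ by adjoining one minimal generator whose order is at most the largest order of a generator of $E^{(j)}$ plus $r$ (the extremal case being the $\D$-polynomial of the freshest generator with an original one). I would prove, by induction on $m$, that a strictly increasing chain of subsets of $\N^m$ with this linearly controlled generator growth has all its generator orders bounded by the function obtained by iterating the $\N^{m-1}$ bound: slicing $\N^m=\N^{m-1}\times\N$ along the last coordinate, each fixed value of the last coordinate contributes a sub-chain governed by the $\N^{m-1}$ estimate, and the number of times the last coordinate is forced to increase triggers exactly one application of $y\mapsto A(m-1,y)$. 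This is engineered to match the recursion $C_{r,m}^1=A(m-1,C_{r-1,m}^1)$ with $C_{0,m}^1=0$; it is the step where the Ackermann function genuinely enters, and it amounts to the length-function analysis of $(\N^m,\le)$ as a normed well-quasi-order.

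For $n>1$ I would run the same analysis with $\N^m$ replaced by $\N^m\times\{1,\dots,n\}$. Since a $\D$-polynomial only ever combines two leaders lying over the same variable index, the growth happens fiber by fiber, but the fibers are coupled because a freshly produced polynomial may have its leader fall into any lower fiber; organizing this correctly, one iterates the single-variable estimate $n$ times, feeding the order bound coming from one variable as the input order for the next. This is precisely what the recursion $C_{r,m}^n=C^1_{C^{n-1}_{r,m},m}$ records, and unwinding it gives the bound $C^n_{r,m}$.

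The main obstacle is not the Ackermannian bookkeeping but the verification underlying the first paragraph: that the characteristic-set construction for a \emph{prime component} can genuinely be organized so that no leader of larger order than the $\lUB$-controlled ones is ever needed. Pseudo-division introduces initials and separants, the splitting into components must be handled so that the branch retaining $\P$ only adjoins polynomials obeying the invariant, and one must check that autoreduction does not itself inflate orders. Making all of this airtight is the delicate part --- it is exactly the content of the proof of \cite[Proposition 6.1]{GO} --- after which the combinatorial inductions on $m$ and on $n$ sketched above deliver the claimed bound $C_{r,m}^n$.
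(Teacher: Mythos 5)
The paper does not prove this fact itself; it cites \cite[Proposition 6.1]{GO}, and the machinery used there is exactly what Section~\ref{combi} of the paper reviews: \emph{differential kernels}. One starts from a generic point of $\P$ and considers the kernels $L_h = K(a_i^\xi : (\xi,i)\in\Gamma(h))$ for $h=r, r+1, \dots$. At each prolongation from length $h$ to $h+1$, the set of minimal leaders can only grow by adjoining generators of order \emph{exactly} $h+1$, and the process stops once the kernel admits a principal realization (Fact~\ref{principal}). This linear, one-unit-per-step growth is what places the ascending chain of leader ideals under Moreno-Soc\'ias's length bound \cite{MS}, which is where the recursion $C_{r,m}^1 = A(m-1,C_{r-1,m}^1)$ comes from; iterating over the $n$ variables gives $C_{r,m}^n$. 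So the approach behind the cited result is genuinely different from the Ritt--Kolchin autoreduction you sketch.

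The gap in your sketch is the order-growth invariant in the second paragraph: that the new minimal generator of $E^{(j+1)}$ has order at most the largest order in $E^{(j)}$ \emph{plus $r$}, ``the extremal case being the $\D$-polynomial of the freshest generator with an original one.'' This is not justified and, read naively, is false: nothing in the Ritt procedure prevents you from forming the $\D$-polynomial of two freshly produced elements, whose $\lUB$ can have order roughly the \emph{sum} of two large orders, and then the chain could in principle grow geometrically rather than linearly, which would not feed into the Moreno-Soc\'ias recursion. One would need a separate normalisation argument showing that $\D$-polynomials can always be taken against low-order elements (or some other replacement for the linear control), and you do not supply it; you yourself defer precisely this step to \cite[Proposition 6.1]{GO}, which amounts to citing rather than proving the fact. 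The differential-kernel formulation is the device that makes the linear-growth invariant hold automatically by construction, and a proof along your lines would need its own substitute for that control before the combinatorial induction on $m$ and $n$ can be run.
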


Consequently, we get

\begin{corollary}\label{mainrough}
Let $\Sigma\subset K\{x_1,\dots,x_n\}$ be of order at most $r$ and $\P$ a prime component. If 
$$\omega_\P(t)=\sum_{i=0}^ma_i\binom{t+i}{i}$$ 
is the Kolchin polynomial of $\P$, then, for all $j=0,\dots,m$,
$$|a_m|+|a_{m-1}|+\cdots+|a_{m-j}|\leq n D^{j}$$
where 
$$D=\binom{C_{r,m}^n+m-1}{C_{r,m}^n}C_{r,m}^n.$$
\end{corollary}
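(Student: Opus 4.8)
The plan is to chain together three results already in hand: Fact~\ref{boundchar} to bound the order of a characteristic set of $\P$, Fact~\ref{factKol} to write $\omega_\P$ as a sum over the variables of the volume polynomials $\omega_{E_i}$ attached to the leaders, and Proposition~\ref{ontheco} to bound the coefficients of each $\omega_{E_i}$ in terms of the total order of the minimal leaders in that variable. The only new ingredient needed is an elementary estimate on how many minimal leaders there can be in each variable.

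First I would fix a characteristic set $\L$ of $\P$. By Fact~\ref{boundchar} its order is at most $c:=C_{r,m}^n$, so every leader of $\L$ has order $\le c$. For $1\le i\le n$ set $E_i=\{\xi\in\N^m:\ \d^\xi x_i \text{ is a leader of }\L\}$ and let $M_i$ be its (finite, by Dickson) set of $\le$-minimal elements; by Fact~\ref{factKol} we have $\omega_\P(t)=\sum_{i=1}^n\omega_{E_i}(t)$, and every $\xi\in M_i$ has $\ord\xi\le c$.

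The key step — and the only genuinely new one — is the bound $|M_i|\le\binom{c+m-1}{c}$. Since distinct minimal elements of $E_i$ are incomparable, $M_i$ is an antichain of $\N^m$; hence the projection $\N^m\to\N^{m-1}$ deleting the last coordinate is injective on $M_i$, because two elements of $M_i$ with the same image would agree in their first $m-1$ coordinates and so be $\le$-comparable. Its image consists of tuples of $\N^{m-1}$ of order $\le c$, of which there are exactly $\binom{c+m-1}{m-1}=\binom{c+m-1}{c}$. Therefore
$$\sum_{\xi\in M_i}\ord\xi\ \le\ c\,|M_i|\ \le\ c\binom{c+m-1}{c}\ =\ D.$$
Now write $\omega_{E_i}(t)=\sum_{k=0}^m a^{(i)}_k\binom{t+k}{k}$. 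When $E_i\ne\emptyset$, Proposition~\ref{ontheco} applied to $E_i$ (whose parameter $\sum_{\xi\in M_i}\ord\xi$ is $\le D$) gives $|a^{(i)}_m|+\cdots+|a^{(i)}_{m-j}|\le(\sum_{\xi\in M_i}\ord\xi)^j\le D^j$ for every $j$; when $E_i=\emptyset$ we have $\omega_{E_i}(t)=\binom{t+m}{m}$ by \eqref{maxi}, so the left-hand side equals $1$ and the same bound holds since $D\ge1$ whenever $r\ge1$. Finally, $\omega_\P=\sum_i\omega_{E_i}$ gives $a_k=\sum_{i=1}^n a^{(i)}_k$, so the triangle inequality yields
$$|a_m|+\cdots+|a_{m-j}|\ \le\ \sum_{i=1}^n\big(|a^{(i)}_m|+\cdots+|a^{(i)}_{m-j}|\big)\ \le\ nD^j,$$
as required.

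I expect the antichain estimate $|M_i|\le\binom{c+m-1}{c}$ to be the main (indeed essentially the only) point requiring an argument; it reduces to the observation that a coordinate projection is injective on any antichain together with counting tuples of bounded order. Everything else is a mechanical combination of Facts~\ref{boundchar} and~\ref{factKol} with Proposition~\ref{ontheco}, plus the subadditivity of the partial sums of absolute values of coefficients under the decomposition $\omega_\P=\sum_i\omega_{E_i}$. (One reads the statement with $r\ge1$, so that $D\ge1$ and the degenerate case $E_i=\emptyset$ is harmlessly absorbed; for $r=0$ every $\omega_{E_i}$ is $0$ or $\binom{t+m}{m}$ and the situation is trivial.)
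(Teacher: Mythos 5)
Your proof is correct and follows the same route as the paper's: decompose $\omega_\P=\sum_i\omega_{E_i}$ by Fact~\ref{factKol}, bound the order of the minimal leaders by $c:=C_{r,m}^n$ via Fact~\ref{boundchar}, bound $\sum_{\xi\in M_i}\ord\xi\le cD_0$ with $D_0:=\binom{c+m-1}{c}$, and apply Proposition~\ref{ontheco} to each $E_i$. You are right that the bound $|M_i|\le D_0$ is the only point requiring an actual argument, and your antichain/projection justification is correct; it is worth noting that the paper asserts this bound by pointing to the count of $m$-tuples of order \emph{exactly} $c$ without saying why the (possibly lower-order) minimal leaders are so bounded, so you have in effect supplied a step the paper leaves implicit --- the incomparability of the elements of $M_i$ is what makes the count work. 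You also handle the degenerate case $E_i=\emptyset$ explicitly (where a direct application of Proposition~\ref{ontheco} would produce the useless estimate $1\le 0^j$), using $D\ge1$ for $r\ge1$; the paper silently glosses over this. Everything else matches the paper's proof.
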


\begin{proof}
Recall that $\omega_\P=\sum_{i=1}^n\omega_{E_i}$ where $E_i$ denotes the set of $\xi\in \N^m$ such that $\d^\xi x_i$ is a leader of a characteristic set of $\P$. By Fact \ref{boundchar}, the elements of the set $M_i$ of minimal elements of $E_i$ have order at most $C_{r,m}^n$. Since the number of $m$-tuples of order $s\in\N$ is $\binom{s+m-1}{s}$, we get that the number of elements in $M_i$ is at most 
$$\displaystyle \binom{C_{r,m}^n+m-1}{C_{r,m}^n},$$
and so
$$\sum_{\xi\in M_i}\ord \xi\leq \binom{C_{r,m}^n+m-1}{C_{r,m}^n}C_{r,m}^n.$$
The result now follows immediately from Proposition~\ref{ontheco}.
\end{proof}

\begin{remark}\label{ritt}
In the case when the differential type of the prime component $\P$ is zero, it is possible to obtain the better bound $a_0\leq n\cdot(C_{r,m}^n)^m$. This upper bound appears in \cite[Proposition 6.2]{GO}. On the other hand, if we specialize the bound in the above corollary to $m=1$, we obtain $a_0\leq nr$, and so we recover the bound found by Ritt in \cite[Chapter 6]{Ritt}.
\end{remark}

We now have an upper and lower bound for the coefficients of the Kolchin polynomial; in particular, this yields an upper bound for the typical differential dimension that only depends on the order of the differential system (and on the fixed data $m$ and $n$). This already answers the question of Kolchin (Question \ref{introquestion}). On the other hand, note that in the case $m=2$ and $n=1$, if the differential type of $\P$ is zero, the above corollary says that the typical differential dimension of $\P$ satisfies
$$a_0\leq 4r^2(2r+1)^2.$$
We saw in Example \ref{introex} that in this case $a_0$ can be $r^2$. But can be it be larger? We will see, in Corollary \ref{final2}, that in fact $r^2$ is an upper bound for $a_0$ (and so we obtain an optimal bound in this situation). To prove this, we will need to prove further (nontrivial) combinatorial properties of the leaders of characteristic sets of prime differential ideals. This is done in the next two sections.

\section{A combinatorial-structural result for characteristic sets}\label{combi}

In this section, and the next, we study combinatorial properties of characteristic sets of prime differential ideals. Our main goal is to produce better bounds than the ones obtained in Corollary \ref{mainrough} for the typical differential dimension (see the final paragraph of Section \ref{rough}).

We keep the same conventions and notation of previous sections. Henceforth we let $\n=\{1,\dots,n\}$. We will consider two different orders $\leq$ and $\unlhd$ on $\Nn$. Given two elements $(\xi,i)$ and $(\eta,j)$ of $\Nn$, we set $(\xi,i)\leq (\eta,j)$ if and only if $i=j$ and $\xi\leq \eta$ with respect to the product order of $\N^m$. On the other hand, if $\xi=(u_1,\dots,u_m)$ and $\eta=(v_1,\dots,v_m)$, we set $(\xi,i) \unlhd (\eta,j)$ if and only if 
$$
(\ord \xi,i,u_1,\dots,u_{m}) \leq_{\text{lex}} (\ord \eta,j,v_1,\dots,v_m)
$$ 
where $\leq_{\text{lex}}$ denotes the (left) lexicographic order.  Note that if $x=(x_1,\ldots,x_{n})$ are differential indeterminates and we identify $(\xi,i)$ with $\d^\xi x_i:=\d_1^{u_1}\cdots\d_m^{u_m}x_i$, then $\leq$ induces an order on the set of algebraic indeterminates $\{\d^\xi x_i:(\xi,i)\in\Nn\}$ given by $\d^\xi x_i\leq \d^\eta x_j$ if and only if $\d^\eta x_j$ is a derivative of $\d^\xi x_i$. On the other hand, the ordering $\unlhd$ induces the canonical orderly ranking on the set of algebraic indeterminates.

The goal of this section is to prove the following combinatorial-structural theorem on the leaders of characteristic sets of prime differential ideals. Recall that for $\xi$ and $\eta$ in $\N^m$, we let $\lUB(\xi,\eta)$ be the least upper bound of $\xi$ and $\eta$ with respect to the product order.

\begin{theorem}\label{combinatorialchar}
Suppose $\Sigma\subset K\{x_1,\dots,x_n\}$ has order at most $r$ and $\P$ is a prime component of $\Sigma$. Suppose further that the order of a characteristic set $\Lambda$ of $\P$ is $H>r$, and let $E_j$ be the set of $\xi\in \N^m$ such that $\d^\xi x_j$ is a leader of $\Lambda$. Then, for each $r\leq h\leq H$, there is $1\leq j\leq n$ and distinct $\xi,\xi'\in E_j$ of order at most $h$ with the following property
\begin{enumerate}
\item [($\dagger$)] for every sequence 
$$\xi=\eta_1,\; \eta_2,\; \dots,\; \eta_s=\xi'$$ 
of $E_j$ with each $\eta_i<\lUB(\xi,\xi')$ and of order at most $h$, there is $1\leq i\leq s$ such that
$$\ord\lUB(\eta_i,\eta_{i+1})>h.$$
\end{enumerate}
Furthermore, if $r\leq h<H$ and $\Lambda$ has no element of order $h+1$, then we can find $1\leq j\leq n$ and distinct $\xi,\xi'\in E_j$ of order at most $h$ with the following property
\begin{enumerate}
\item [($\dagger'$)] same as $(\dagger)$ except that the last inequality now becomes
$$\ord\lUB(\eta_i,\eta_{i+1})>h+1.$$
\end{enumerate}
\end{theorem}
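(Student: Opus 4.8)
The plan is to phrase everything in terms of the Hilbert--Samuel functions $H_{E_1},\dots,H_{E_n}$, to use Theorem~\ref{HS} as the bridge between the connectivity conditions in $(\dagger)$, $(\dagger')$ and maximal Macaulay growth, and to convert the hypothesis that $\Lambda$ has order $H>r$ --- via Theorem~\ref{Macaulay} and Gotzmann's persistence theorem (Theorem~\ref{Gotzmann}), in the spirit of the order bound of Fact~\ref{boundchar} --- into an obstruction at every intermediate level. The first ingredient is the dictionary: for a fixed $j$, saying that $(\dagger)$ (resp.\ $(\dagger')$) \emph{fails} at level $h$ for $E_j$ means exactly that every pair of distinct $\xi,\xi'\in E_j$ of order $\le h$ is joined by a sequence in $E_j$ of order $\le h$ lying strictly below $\lUB(\xi,\xi')$ whose consecutive least upper bounds have order $\le h$ (resp.\ $\le h+1$); this is precisely the conclusion of Theorem~\ref{HS}, with $d=h$ (resp.\ $d=h+1$), for all such pairs. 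So negating the statements to be proved amounts to an \emph{absence} of disconnected pairs, and the job is to produce such a pair from a new leader of $\Lambda$.

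To locate that new leader I would use the hypothesis $\ord\Sigma\le r$ as follows: by the Ritt--Kolchin prolongation analysis (and primality of $\P$), the elements of $\Lambda$ of order $\le r$ already form a characteristic set of the algebraic prime $\P^{(r)}$ obtained by intersecting $\P$ with the polynomials in the derivatives of order $\le r$; hence any leader of $\Lambda$ of order $>r$ is genuinely new and is produced by resolving an overlap $\lUB(\xi,\xi')$ --- of order one more than that new leader's order --- between two lower leaders $\d^\xi x_j,\d^{\xi'}x_j$ for the same indeterminate. A chain criterion in the spirit of Buchberger's then says that such an overlap yields a new leader only if $\xi,\xi'$ cannot be joined, strictly below $\lUB(\xi,\xi')$, by a low-order sequence in $E_j$, i.e.\ only if $(\dagger)$ holds at the level just below the overlap. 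Processing the overlaps level by level from $r$ up to $H$ --- with the leader of order $H$ ensuring the process does not terminate prematurely, and Macaulay's and Gotzmann's theorems guaranteeing that once the overlaps resolve cleanly at some level the process genuinely stops --- should produce, for each $h$ with $r\le h\le H$, an index $j$ and a disconnected pair $\xi,\xi'\in E_j$ of order $\le h$; this is $(\dagger)$.

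For the furthermore clause, assume in addition that $\Lambda$ has no leader of order $h+1$ (so $h+1<H$ and some leader of order $>h+1$ exists). Then the level-$(h+1)$ picture is merely the prolongation of the level-$h$ picture, so an overlap of order $h+1$ cannot be absorbed by a leader born at level $h+1$; the leader of order $>h+1$ must therefore trace back to an overlap of order $\ge h+2$ between leaders of order $\le h$ in some $E_j$, and, the chain criterion at the overlap level $h+1$ being vacuous, the obstruction descends to level $h$ but with the sharper bound $\ord\lUB(\eta_i,\eta_{i+1})>h+1$, i.e.\ $(\dagger')$. Equivalently, in Hilbert--Samuel terms: the absence of a leader of order $h+1$ makes the cones over the $E_j$ agree with their order-$h$ truncations through level $h+1$, so if growth were maximal at the step $h\to h+1$ then Gotzmann's theorem would propagate it forever and forbid any leader of order $>h$, contradicting the standing hypothesis; hence growth there is non-maximal, and the analysis underlying Theorem~\ref{HS} produces the required witnesses.

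The hardest point will be the two-way correspondence ``new leader of $\Lambda$ at level $h'+1$ $\leftrightarrow$ failure of the $(\dagger)$-connectivity at level $h'$'': matching the Ritt--Kolchin bookkeeping of overlaps and their reductions (which in general \emph{lower} the order, possibly changing the indeterminate) with the Macaulay--Gotzmann growth estimates; extracting the disconnected witnesses from a \emph{failure} of maximal growth, which is the direction opposite to Theorem~\ref{HS} and must be supplied separately; ensuring those witnesses can always be taken of order $\le h$ and inside a single $E_j$; and --- the most global issue --- checking that the chain of overlaps from level $r$ to level $H$ really forces an obstruction at \emph{every} level of that range, not merely at the levels where new leaders are actually born.
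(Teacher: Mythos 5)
Your proposal attempts to run the argument entirely inside the Macaulay--Gotzmann / Hilbert--Samuel framework, together with an unstated ``chain criterion in the spirit of Buchberger's'' for differential characteristic sets, and you yourself flag the crux (``new leader at level $h'+1$ $\leftrightarrow$ failure of $(\dagger)$ at level $h'$'') as something that ``must be supplied separately.'' That is exactly the gap, and it is not a small one: nothing in Theorems~\ref{Macaulay}, \ref{Gotzmann}, \ref{HS} gives you the implication you need. Theorem~\ref{HS} runs in the wrong direction for your purposes (maximal growth $\Rightarrow$ connectivity, not the converse), so negating $(\dagger)$ gives you the \emph{conclusion} of Theorem~\ref{HS} but no leverage on $H_{E_j}$. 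The claim that the order-$\le r$ part of $\Lambda$ is a characteristic set of the algebraic ideal $\P\cap K[\d^\xi x_i:\ord\xi\le r]$ is also not true in general, and the level-by-level ``overlap resolution'' you sketch is not established anywhere in the paper and would require proof of its own. In short, the combinatorial machinery you invoke can process the consequences of Theorem~\ref{combinatorialchar} (which is what Section~\ref{several} does), but it cannot prove the theorem.

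The paper's actual proof uses an entirely different tool, namely the theory of \emph{differential kernels} and the existence of principal realizations (Fact~\ref{principal}, from \cite{GO}). One argues by contradiction: if at some level $h$ the connectivity condition held for every $j$ and every pair in $E_j$ of order $\le h$ (i.e.\ $(\dagger)$ failed), then the field $L=K(a_i^\xi:(\xi,i)\in\Gamma(h))$ built from a generic point $a$ of $\P$ is a differential kernel satisfying hypothesis $(\sharp)$ of Fact~\ref{principal}, hence admits a principal realization $b$ in a universal extension. Since $b$ specializes to the generic point $a$ of $\P$ (Remark~\ref{special}) and is a zero of $\Sigma$ (because $\ord\Sigma\le r\le h$), $b$ must itself be a generic point of $\P$; and since every minimal leader of $K\langle b\rangle$ has order $\le h$, the characteristic set $\Lambda$ has order $\le h-1<H$, a contradiction. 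The furthermore clause is the same argument run at level $h+1$. None of this is recoverable from the Hilbert--Samuel estimates alone, so the principal-realization input has to be supplied; your proposal as written is missing it.
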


\begin{remark}
The reason we call this a ``combinatorial-structural'' theorem is the following: Suppose $n=1$ and write $\Lambda=\{f_1,\dots,f_s\}$ with $\ord f_1\leq\cdots\leq \ord f_s$. Then the theorem says that $\ord f_1$ and $\ord f_2$ are at most $r$, and $\ord f_3\leq \ord\lUB(\xi_{f_1},\xi_{f_2})$ where $\xi_{f_i}$ is such that $\d^{\xi_{f_i}}x$ is the leader of $f_i$. In other words, the leader of $f_3$ cannot be that ``far'' from the leaders of the previous two elements. Similar observations can be made for the rest of the $f_i$'s. For instance,
$$\ord f_4\leq \max\{\ord\lUB(\xi_{f_1},\xi_{f_2}), \ord\lUB(\xi_{f_1},\xi_{f_3}), \ord\lUB(\xi_{f_2},\xi_{f_3})\}.$$
In particular, for $i\geq 2$, we obtain a bound on $\ord f_i$ in terms of the leaders of the previous elements.
\end{remark}

The proof of the theorem makes use of the theory of differential kernels from \cite{GO}. Let us review the results that we will need. For $\alpha=(\xi,i)\in \Nn$, we set $\ord\alpha=\ord\xi$. Also, for each nonnegative integer $r$ we let 
$$\Gamma(r)=\{\alpha\in \Nn: \ord \alpha\leq r\}.$$

\begin{definition}
A differential kernel of length $r\in \N$ over $K$ is a field extension of the form
$$L=K(a_i^\xi:(\xi,i)\in \Gamma(r))$$
such that there exists derivations
$$D_k:K(a_i^\xi:(\xi,i)\in \Gamma(r-1))\to L,\; \text{ for }k=1,\dots,m,$$
extending $\d_k$ and $D_ka_i^\xi=a_i^{\xi+{\bf k}}$ where $\bf k$ denotes the $m$-tuple whose $k$-th entry is one and zeroes elsewhere.
\end{definition}

%A \emph{prolongation} of a differential kernel $(L,D_1,\dots,D_m)$ of length $s\geq r$ is a differential kernel $L=K(a_i^\xi:(\xi,i)\in \Gamma(s))$ over $K$ with derivations $D_1',\dots,D_m'$ such that $L'$ is a field extension and $D_k'$ extends $D_k$ for all $k$. 

Given a field extension $L$ of $K$ of the form
\begin{equation}\label{kernelform}
L=K(a_i^\xi:(\xi,i)\in \Gamma(r)),
\end{equation}
we say that $(\xi,i)\in \Nn$ is a \emph{leader} of $L$ if $a_i^{\xi}$ is algebraic over $K(a_j^{\eta}: (\eta,j)\lhd (\xi,i))$. A leader $(\xi,i)$ is said to be \emph{minimal} if there is no leader $(\eta,i)$ with $\eta<\xi$. Note that the notions of leader and minimal leader make sense even when we allow $r=\infty$ in \eqref{kernelform}, where we set $\Gamma(\infty)=\Nn$. 

\begin{definition}
An $n$-tuple $g=(g_1,\dots,g_n)$ contained in a differential field extension $(M,\{\dd_1,\dots,\dd_m\})$ of $(K,\D)$ is said to be a regular realization of the differential kernel $L=K(a_i^\xi:(\xi,i)\in \Gamma(r))$ if the tuple
$$(\dd^\xi g_i:(\xi,i)\in \Gamma(r))$$
is a generic specialization of $(a_i^\xi:(\xi,i)\in \Gamma(r))$ over $K$ (in the algebraic sense). The tuple $g$ is said to be a principal realization of $L$ if all the minimal leaders of the differential field 
$$K\langle g\rangle=K(\dd^\xi g_i:(\xi,i)\in\Nn)$$ 
have order at most $r$.
\end{definition}

\begin{remark}\label{special}
If $f$ is a principal realization and $g$ is a regular realization of the differential kernel $L$, then $g$ is a \emph{differential} specialization of $f$ over $K$. This is the content of \cite[Lemma 2.7]{GO}. 
\end{remark}

From \cite[Theorem 3.1]{GO}, we get

\begin{fact}\label{principal}
Let $L=K(a_i^\xi:(\xi,i)\in \Gamma(r))$ be a differential kernel over $K$. Suppose that for every $1\leq j\leq n$ the following condition holds
\begin{enumerate}
\item [($\sharp$)] for every pair of distinct minimal leaders of $L$ of the form $(\xi,j)$ and $(\xi',j)$ there exists a sequence
$$(\xi,j)=(\eta_1,j),\; (\eta_2,j),\;\cdots,\; (\eta_s,j)=(\xi',j)$$
of minimal leaders such that $\eta_i<\lUB(\xi,\xi')$ and $\ord\lUB(\eta_i,\eta_{i+1})\leq r$ for $i=1,\dots,s-1$.
\end{enumerate}
Then, the differential kernel $L$ has a principal realization. 
\end{fact}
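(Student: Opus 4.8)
The plan is to build a regular realization of $L$ as the union of an ascending tower of \emph{generic prolongations} of differential kernels, and then to use condition $(\sharp)$ to show that no ``unexpected'' minimal leaders appear along that tower, so that the realization is in fact principal. I would work inside the theory of differential kernels of \cite{GO}: every differential kernel $K(a_i^\xi:(\xi,i)\in\Gamma(s))$ of length $s$ admits prolongations to length $s+1$, among which there is a canonical \emph{generic} one — the generic point of the prolongation variety — characterized by the transcendence degree over $K$ of its level $\le s+1$ part being as large as possible. Starting from $L_r:=L$ and letting $L_{s+1}$ be the generic prolongation of $L_s$ for each $s\ge r$, the field $L_\infty:=\bigcup_{s\ge r}L_s$ becomes a differential field extension of $(K,\D)$ — the derivations $\dd_k$ gluing all the $D_k$ commute, since $[\dd_k,\dd_l]a_i^\xi=a_i^{\xi+{\bf l}+{\bf k}}-a_i^{\xi+{\bf k}+{\bf l}}=0$ — and the tuple $g=(g_1,\dots,g_n)$ with $\dd^\xi g_i=a_i^\xi$ is a realization of $L$. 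It is automatically \emph{regular}, since the level $\le r$ part of $K\langle g\rangle=L_\infty$ is exactly $L=L_r$. Everything thus reduces to showing that $g$ is \emph{principal}, i.e.\ that all minimal leaders of $L_\infty$ have order $\le r$; this is where genericity of the tower and $(\sharp)$ must be used.

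I would first reduce this to a single prolongation step. Because algebraicity over a fixed subfield is intrinsic, a minimal leader of $L_s$ of order $\le s$ stays a minimal leader in every prolongation, and conversely any minimal leader of $L_{s+1}$ of order $\le s$ is already one of $L_s$; hence the minimal leaders of $L_{s+1}$ are those of $L_s$ together with possibly some new ones, necessarily of order exactly $s+1$. So if no generic prolongation step ever produces a new minimal leader, the minimal leaders of $L_\infty$ coincide with those of $L$, all of order $\le r$, and $g$ is principal. Moreover, as long as no new minimal leader has appeared at the earlier steps, the minimal leaders of $L_s$ are precisely those of $L$, and the connecting sequences furnished by $(\sharp)$ — which have $\lUB$-order $\le r\le s$ — still witness the analogue of $(\sharp)$ for $L_s$ with bound $s$. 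By induction on $s$ it therefore suffices to prove the one-step statement: \emph{if a differential kernel of length $s$ satisfies $(\sharp)$ with $s$ in place of $r$, then its generic prolongation has no minimal leader of order $s+1$.}

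For the one-step statement I would examine the defining relations of the generic prolongation. Over $L_s$, the new coordinates $a_i^\zeta$ ($\ord\zeta=s+1$) satisfy exactly the linear relations obtained by applying the extended derivations $D_k'$ to the minimal polynomials of the order-$s$ leaders of $L_s$: differentiating the minimal polynomial of $(\xi,i)$ in direction $k$ expresses $a_i^{\xi+{\bf k}}$ as an $L_s$-linear combination of coordinates $a_{i'}^{\eta'+{\bf k}}$ with $(\eta',i')\lhd(\xi,i)$, each of which is $\unlhd$-smaller than $a_i^{\xi+{\bf k}}$. Since an order-$(s+1)$ derivative $\xi+{\bf k}$ of a leader automatically lies above a minimal leader, the leaders produced this way are never new minimal leaders; a new minimal leader can arise only through a compatibility relation forced when one order-$(s+1)$ multi-index is a derivative of two distinct order-$s$ leaders of the same index — equivalently, from a pair $(\xi,i),(\xi',i)$ of order-$s$ leaders with $\ord\lUB(\xi,\xi')=s+1$. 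This is precisely what $(\sharp)$ is built to control: after reducing the compatibility relations to ones between \emph{minimal} leaders (a non-minimal leader being a derivative of a smaller one, its relations differentiate compatibly from upstream), the connecting chain $\xi=\eta_1,\dots,\eta_t=\xi'$ with $\eta_q<\lUB(\xi,\xi')$ and $\ord\lUB(\eta_q,\eta_{q+1})\le s$ lets one rewrite the compatibility relation for $(\xi,i),(\xi',i)$ as a formal consequence of the compatibility relations for the consecutive pairs $(\eta_q,i),(\eta_{q+1},i)$; and each of the latter only concerns the multi-index $\lUB(\eta_q,\eta_{q+1})$, of order $\le s$, which is already realized inside $L_s$, so it imposes nothing new on the level-$(s+1)$ coordinates. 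Chasing all these chains — a combinatorial argument of the same flavour as the proof of Theorem~\ref{HS} — shows that the generic prolongation has no pivot (hence no leader) at level $s+1$ outside the derivatives of leaders of $L_s$, so the set of minimal leaders does not change.

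The step I expect to be the main obstacle is this last one: one must set up the defining ideal of the generic prolongation carefully enough to know that it is cut out by \emph{precisely} the $D_k'$-images of the minimal polynomials of the leaders together with their forced compatibility relations — this uses the genericity of the prolongation and the structure of the orderly ranking $\unlhd$ — and then carry out the chain-chasing that turns the purely combinatorial hypothesis $(\sharp)$ into the algebraic statement that these compatibility relations introduce no new algebraic dependence among the new coordinates. This is essentially the content of \cite[\S3]{GO}, and its combinatorial core parallels the Hilbert--Samuel growth criterion behind Theorem~\ref{HS}; in a write-up I would invoke that machinery rather than reconstruct it. Once the one-step statement is available, the tower-and-induction argument of the first two paragraphs immediately yields that $g$ is a principal realization of $L$.
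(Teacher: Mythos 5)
The paper itself gives no proof of Fact~\ref{principal}: it is quoted directly from \cite[Theorem 3.1]{GO}. Judged as a standalone argument, your sketch has a genuine gap, and it sits in your very first step. The claim that every differential kernel of length $s$ admits a prolongation to length $s+1$ (let alone a canonical generic one, a ``generic point of the prolongation variety'') is false for $m>1$: the existence of a prolongation is precisely the nontrivial consistency issue that the hypothesis ($\sharp$) is there to resolve. Concretely, take $m=2$, $n=1$, let $a$ be transcendental over $K$, and consider the kernel of length $1$ with $a^{(0,0)}=a$, $a^{(1,0)}=a$, $a^{(0,1)}=1$ (i.e.\ $D_1a=a$, $D_2a=1$); this is a legitimate differential kernel, but any prolongation would force $a^{(1,1)}=D_1'\bigl(a^{(0,1)}\bigr)=D_1'(1)=0$ and simultaneously $a^{(1,1)}=D_2'\bigl(a^{(1,0)}\bigr)=D_2'(a)=1$, so no prolongation exists at all --- it is not that a generic prolongation exists and acquires a new minimal leader. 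Note that ($\sharp$) fails in this example: $(1,0)$ and $(0,1)$ are distinct minimal leaders and any connecting step between them has $\ord\lUB=2>r=1$. So ($\sharp$) is not merely a device for preventing new minimal leaders in an automatically existing tower; it is what makes each prolongation exist in the first place, by reducing the compatibility of the two values forced at $\lUB(\xi,\xi')$ to a chain of adjacent compatibilities, each already witnessed inside the kernel because $\ord\lUB(\eta_i,\eta_{i+1})\leq r$.

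Your third paragraph does identify the correct mechanism (compatibility relations where one higher multi-index is a derivative of two distinct leaders of the same variable, resolved along the chains supplied by ($\sharp$)), but the two substantive steps --- that these adjacent compatibilities genuinely propagate to the pair $(\xi,\xi')$, and that after a one-step generic prolongation the minimal leaders, and hence ($\sharp$) at the next level, persist so the induction can continue --- are exactly what you defer to \cite[\S 3]{GO}. Since the statement being proved \emph{is} \cite[Theorem 3.1]{GO}, that deferral is circular as a proof; within the present paper the correct move is simply the citation the author makes. A genuine proof would have to carry out the one-step construction under ($\sharp$): show the values forced on the order-$(s+1)$ coordinates by the several derivations agree, adjoin the unforced coordinates as algebraically independent elements, check that the extended derivations are well defined on the whole field of the kernel, and verify that no new minimal leaders are created.
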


We can now prove Theorem \ref{combinatorialchar}.

\begin{proof}[Proof of Theorem \ref{combinatorialchar}]
Suppose, towards a contradiction, that there is $r\leq h\leq H$ such that for all $j$ and distinct $\xi,\xi'\in E_j$, of order at most $h$, there is a sequence as in ($\dagger$) but with $\ord\lUB(\eta_i,\eta_{i+1})\leq h$ for all $i=1,\dots,s$. Note that the latter implies that $\Lambda$ has no elements of order $h$. We will show that then $\Lambda$ has order at most $h-1$, contradicting the choice of $H$. 

Let $a=(a_1,\dots,a_n)$ be a differential generic point over $K$ of $\P$ in a universal differential field extension $(\mathcal U,\D)$ of $(K,\D)$. Set $a_i^\xi:=\d^\xi a_i$, and consider the differential kernel $L=K(a_i^\xi:(\xi,i)\in \Gamma(h))$. By Fact~\ref{principal}, and the assumption in the above paragraph, $L$ has a principal realization. Since $\mathcal U$ is universal, we may assume that this principal realization of $L$ is witnessed inside of $\mathcal U$; that is, there is tuple $b=(b_1,\dots,b_n)$ from $\mathcal U$ such that $(\d^\xi b_i:(\xi,i)\in \Gamma(h))$ is a generic specialization of $(\d^\xi a_i:(\xi,i)\in \Gamma(h))$ over $K$, and all minimal leaders of the differential field $K\langle b\rangle$ have order at most $h$. 

Since $\Sigma$ has order at most $r\leq h$ and $a$ is a zero of $\Sigma$, we get that $b$ is also a zero of $\Sigma$. Moreover, by Remark \ref{special}, $a$ is a differential specialization of $b$ over $K$. But $a$ is a differential generic point of the prime component $\P$ of $\Sigma$, so $b$ must also be a differential generic point of $\P$ over $K$. Hence, because all minimal leaders of $K\langle b\rangle$ have order at most $h$, the characteristic set $\Lambda$ of $\P$ has order at most $h$. Moreover, as we observed in the first paragraph of the proof, $\Lambda$ does not have elements of order $h$. Thus, $\Lambda$ has order at most $h-1$, and we have reached the desired contradiction.

For the ``furthermore'' clause, suppose, towards a contradiction, that there is $r\leq h<H$ such that $\Lambda$ has no elements of order $h+1$ and that for all $j$ and distinct $\xi,\xi'\in E_j$, of order at most $h$, there is a sequence as in ($\dagger$) but with $\ord\lUB(\eta_i,\eta_{i+1})\leq h+1$ for all $i=1,\dots,s$. The same argument as above, but now setting $L$ to be the differential kernel $K(a_i^\xi:(\xi,i)\in \Gamma(h+1))$, yields that $\Lambda$ has order at most $h+1$. But as we are assuming $\Lambda$ has no elements of order $h+1$, we get that $\Lambda$ has order at most $h$, again contradicting the choice of $H$. 
\end{proof}

\section{Main results on estimates for typical differential dimension}\label{several}

In this section, and the next, we give a (recursive) construction of a bound, denoted $\M(r,m,n,\tau)$, for the typical differential dimension of any prime component $\P$ of a given differential system, which only depends on the order $r$ of the system, the differential type $\tau$ of $\P$, and the number of derivations $m$ and variables $n$. This bound, for the differential type, is significantly better than the one produced by Corollary \ref{mainrough} (c.f. Section \ref{compu} below).

We carry forward the notation and conventions from previous sections. In particular, $r$ will be a nonnegative integer (usually denoting the order of the differential system), $m$ is the number of derivations and $n$ the number of differential variables. Also, $(K,\D=\{\d_1,\dots,\d_m\})$ is our ground differential field of characteristic zero. At this point it might be convenient for the reader to recall some of the terminology presented in Section \ref{pre}, such as compressed sets, Hilbert-Samuel function, Macaulay's and Gotzmann's theorems, etc.

In order to prove the main result of this section we will make use of a special sequence. Let $\bar\mu$ be the sequence $\mu_1,\mu_2,\dots,\mu_L$ of $\N^m$ defined recursively as follows: Let $\mu_1=(r,0,\dots,0)$ and for $i\geq 2$, as long as it is possible,
$$\mu_{i}=\max_{\unlhd}\{\xi\in \N^m: \ord\xi=r+i-2 \text{ and } \xi\not\geq \mu_1,\dots,\mu_{i-1}\}.$$
For instance, if $m\geq 3$, $\mu_2=(r-1,1,0\dots,0)$, $\mu_3=(r-1,0,2,0\dots,0)$, etc. 

The sequence $\bar \mu$ can be more explicitly constructed as follows; for $i\geq 2$,
\begin{enumerate}
\item[(i)] if $\mu_i = (u_1,\dots,u_s,0,\dots,0,u_m)$ with $s < m-1$ and $u_s > 0$, then
$$
\mu_{i+1} = (u_1,\dots,u_s-1, u_m+2,0,\dots,0)
$$
\item[(ii)] if $\mu_i = (u_1,\dots,u_{m-1},u_m)$ with $u_{m-1} > 0$, then
$$
\mu_{i+1} = (u_1,\dots,u_{m-1} - 1, u_m+2).
$$
\end{enumerate}

For example, when $m=2$, the sequence $\bar\mu$ is given by
$$
\mu_1=(r,0), \; \mu_2=(r-1,1),\; \mu_3=(r-2,3),\; \mu_4=(r-3,5),\dots,\; \mu_{r+1}=(0,2r-1),
$$
and so in this case the length of the sequence is $L=r+1$.

In \cite[\S3.2]{LO}, it is observed that $\bar\mu$ is a compressed subset of $\N^m$. We also have that the order of the last element of the sequence satisfies $\ord\mu_L+1=C_{r,m}^1$, where $C_{r,m}^n$ was defined in Section \ref{rough} (in terms of the Ackermann function). Furthermore, it is easy to see that $H_{\bar\mu}(\ord\mu_L)=0$ where $H_{\bar\mu}$ denotes the Hilbert-Samuel function of $\bar\mu$. This latter fact implies that $\bar\mu$ has \emph{finite volume}; that is, 
$$\Vol\bar\mu:=|\{\xi\in \N^m: \xi\not\geq \mu_i \text{ for all } 1\leq i\leq L\}|$$
is finite. 

\begin{remark}\label{vol}
From the construction of $\bar\mu$, one can check that $\Vol\bar\mu$ is equal to the sum of the $m$-th entry of the tuples in the sequence $\bar\mu$.
\end{remark}

For $1\leq\ell\leq L$, let $\bar\mu|_\ell$ denote the subsequence $\mu_1,\dots,\mu_\ell$.

\begin{proposition}\label{shape}
For $1\leq \ell\leq L$ we have
$$\omega_{\bar\mu|_\ell}(t)=\sum_{i=1}^{m-1}c_{m-i}\binom{t+i}{i}+c$$
where $\mu_\ell=(c_1,\dots,c_m)$ and $c$ is the sum of the $m$-th entry of the tuples in $\bar\mu|_\ell$. In particular, $\omega_{\bar\mu}(t)$ is constant, equal to $\Vol\bar\mu$.
\end{proposition}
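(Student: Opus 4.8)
The plan is to prove the formula by induction on $\ell$, tracking the ``shape'' of the Kolchin polynomial $\omega_{\bar\mu|_\ell}$ as we append one element of the sequence at a time. For the base case $\ell=1$ we have $\bar\mu|_1=\{\mu_1\}=\{(r,0,\dots,0)\}$, and a direct computation of $\omega_E$ for $E=\{(r,0,\dots,0)\}$ should give $\omega_{\bar\mu|_1}(t)=\sum_{i=1}^{m-1}0\cdot\binom{t+i}{i}+0$ when $r>0$; more carefully, the volume $V_E(s)$ counts tuples of order $\le s$ whose first coordinate is $<r$, which for large $s$ is a polynomial of degree $m-1$ with the stated leading behaviour (and here $c_1=r$, $c_2=\dots=c_m=0$, $c=0$, consistent with the claim). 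The case $r=0$ is degenerate ($\mu_1=0$, so $\bar\mu|_1$ has empty volume) and should be checked separately or absorbed.

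For the inductive step, suppose the formula holds for $\bar\mu|_{\ell-1}$ with $\mu_{\ell-1}=(c_1',\dots,c_m')$ and constant term $c'$. I would pass from $\bar\mu|_{\ell-1}$ to $\bar\mu|_{\ell}$ by adding the single new point $\mu_\ell$, and analyze how $V_{\bar\mu|_\ell}(s)$ differs from $V_{\bar\mu|_{\ell-1}}(s)$: the tuples removed are precisely those $\xi$ with $\ord\xi\le s$, $\xi\ge\mu_\ell$, and $\xi\not\ge\mu_1,\dots,\mu_{\ell-1}$. Using the explicit recursive description (i)--(ii) of the sequence — which tells us exactly which coordinate of $\mu_{\ell-1}$ was decremented by one and that the last coordinate increased by $2$ (or, in case (i), that a middle coordinate dropped to $0$ while coordinate $m$ jumped) — one can identify this ``removed'' set as a shifted copy of a lower-dimensional lattice complement. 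The key bookkeeping is that decrementing the $j$-th coordinate (for some $j<m$) and recording the new point as a fresh leader has the effect of subtracting a $\binom{t+j-1}{j-1}$-type term from the degree-counting while the overall polynomial remains of the promised form; iterating the binomial identity \eqref{bin11} should reconcile the coefficients $c_{m-i}$ of $\mu_{\ell-1}$ with those of $\mu_\ell$. Equivalently, one can invoke Fact~\ref{factKol}-style reasoning together with the telescoping identity $\binom{t-1+i}{i}=\binom{t+i}{i}-\binom{t+i-1}{i-1}$ used in the proof of Proposition~\ref{ontheco}; I expect the bulk of the work is this coefficient matching, done coordinate-by-coordinate according to which case (i) or (ii) applies.

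The ``in particular'' clause then falls out immediately: taking $\ell=L$, since $\mu_L=(c_1,\dots,c_m)$ is the last element of the sequence, the construction terminates exactly because $H_{\bar\mu}(\ord\mu_L)=0$, which forces $\mu_L$ to have all coordinates except possibly the last equal to zero — that is, $c_1=\dots=c_{m-1}=0$ — so every binomial term $c_{m-i}\binom{t+i}{i}$ vanishes and $\omega_{\bar\mu}(t)=c$, the sum of the $m$-th entries, which is $\Vol\bar\mu$ by Remark~\ref{vol}. I should double-check the claim $c_1=\dots=c_{m-1}=0$ for $\mu_L$ directly from rules (i)--(ii): each application strictly decreases $\sum_{j<m}$ (weighted suitably, e.g.\ the order of the ``truncation to the first $m-1$ coordinates''), and the process can only stop when $\mu_L\not\ge$ nothing forces a continuation, which happens precisely when $\mu_L=(0,\dots,0,\ast)$.

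\textbf{Main obstacle.} The genuinely fiddly part is the inductive step's coefficient computation: making precise how the coefficients $(c_1,\dots,c_m)$ of $\mu_\ell$ — rather than, say, the differences between consecutive $\mu$'s — end up being exactly the standard coefficients of $\omega_{\bar\mu|_\ell}$. The cleanest route is probably to compute $\omega_{E}(t)$ for the ``staircase'' set $E=\bar\mu|_\ell$ directly by a nested sum over the coordinates (peeling off the first coordinate, then the second, etc., each time reducing to a lower-dimensional volume count), show this nested sum is a numerical polynomial of the claimed form, and observe that the free parameters surviving in the leading terms are precisely the coordinates of the $\unlhd$-maximal element $\mu_\ell$ of the staircase at level $\ell$. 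The explicit description (i)--(ii) of $\bar\mu$ is what guarantees the staircase has this rigid nested structure, so citing it carefully is essential.
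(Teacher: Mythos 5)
Your proposal takes essentially the same route as the paper's proof: induction on $\ell$, with the inductive step split according to rules (i)/(ii) of the recursive construction of $\bar\mu$, identifying the set of newly excluded lattice points at each step and reconciling coefficients via the binomial identity \eqref{bin11}, and finally noting $\mu_L=(0,\dots,0,*)$ to get the ``in particular'' clause from Remark~\ref{vol}. Note the slip early in your base case — $\omega_{\bar\mu|_1}(t)$ is $r\binom{t+m-1}{m-1}$, not $0$ — but your ``more carefully'' sentence already corrects it ($c_1=r$ contributes $c_{m-(m-1)}\binom{t+m-1}{m-1}=r\binom{t+m-1}{m-1}$), so this is only a transcription issue, not a gap.
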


\begin{proof}
Note that this is trivially true when $m=1$, so we assume $m\geq 2$. We proceed by induction on $1\leq \ell\leq L$. The case $\ell=1$ is clear, as $\bar\mu|_1=\mu_1=(r,0,\dots,0)$ and so 
$$\omega_{\mu|_1}(t)=r\binom{t+m-1}{m-1}.$$
For $\ell=2$, $\bar\mu|_2$ consists of $(r,0,\dots,0)$ and $(r-1,1,0,\dots,0)$. Note that a point $(v_1,\dots,v_m)\in \N^m$ is greater than or equal to $\mu_2$ but not to $\mu_1$ (with respect to the product order) if and only if $v_1=r-1$ and $v_2\geq 1$. Thus
$$\omega_{\bar\mu|_2}(t)=(r-1)\binom{t+m-1}{m-1}+\binom{t+m-2}{m-2}$$
Now assume $\ell\geq 3$. We consider the two possible shapes that $\mu_\ell$ can take according to the construction of $\bar \mu$:

\medskip

\noindent \underline{Case 1.} Suppose $\mu_{\ell-1}=(u_1,\dots,u_s,0,\dots,0,u_m)$ with $s<m-1$ and $u_s>0$. Then, by construction of $\bar\mu$, 
$$\mu_{\ell}=(u_1,\dots,u_s-1,u_m+2,0,\dots,0).$$
A point $(v_1,\dots,v_m)\in \N^m$ is greater than or equal to $\mu_\ell$ but not to any element of $\bar\mu|_{\ell-1}$ if and only if
$$v_1=u_1,\;\dots,\;v_{s}=u_s-1 \;\text{ and }\; v_{s+1}\geq u_m+2.$$
By induction 
$$\omega_{\bar\mu|_{\ell-1}}(t)=\sum_{i=m-s}^{m-1} u_{m-i}\binom{t+i}{i}+c,$$
hence the above yields
\begin{align*}
\omega_{\bar\mu|_\ell}(t)
&=\omega_{\bar\mu|_{\ell-1}}-\binom{t+m-s}{m-s}+(u_{m}+2)\binom{t+m-s-1}{m-s-1} \\
&= \sum_{i=m-s+1}^{m-1}u_{m-i}\binom{t+i}{i}+(u_{s}-1)\binom{t+m-s}{m-s}+(u_m+2)\binom{t+m-s-1}{m-s-1}+c
\end{align*}
This is the desired shape of $\omega_{\bar\mu|_\ell}$, noting that, by induction, $c$ is the sum of the $m$-th entry of the tuples in $\bar\mu|_\ell$.

\medskip

\noindent \underline{Case 2.} Suppose $\mu_{\ell-1}=(u_1,\dots,u_{m-1},u_m)$ with $u_{m-1}>0$. Then, by construction of $\bar\mu$,
$$
\mu_{\ell}=(u_1,\dots,u_{m-1}-1,u_m+2).
$$
A point $(v_1,\dots,v_m)\in \N^m$ is greater than or equal to $\mu_\ell$ but not to any element of $\bar\mu|_{\ell-1}$ if and only if
$$v_1=u_1,\;\dots,\;v_{m-1}=u_{m-1}-1 \;\text{ and }\; v_{m}\geq u_m+2.$$
By induction 
$$\omega_{\bar\mu|_{\ell-1}}(t)=\sum_{i=1}^{m-1} u_{m-i}\binom{t+i}{i}+c,$$
where $c$ is the sum of the $m$-th entries of the tuples in $\bar\mu|_{\ell-1}$. The above yields
\begin{align*}
\omega_{\bar\mu|_\ell}(t)
&=\omega_{\bar\mu|_{\ell-1}}-\binom{t+1}{1}+(u_{m}+2) \\
&= \sum_{i=2}^{m-1}u_{m-i}\binom{t+i}{i}+(u_{m-1}-1)\binom{t+1}{1}+(c+u_m+2)
\end{align*}
This yields the desired shape of $\omega_{\bar\mu|_\ell}$. 

The ``in particular'' clause follows from the fact that $\mu_L$ is of the form $(0,\dots,0,u_m)$ and from Remark~\ref{vol}.
\end{proof}

For our purposes we will need a more general sequence. Let $\bar \mu^{(1)}$ be the sequence constructed as above, starting at $(r,0,\dots,0)$, inside of $\N^m\times\{n\}$ (i.e., the $n$-th copy of $\N^m$ in $\Nn$). Thus, $\bar\mu^{(1)}$ is of the form 
$$
((\mu^{(1)}_1,n),\dots,(\mu^{(1)}_{L_1},n)).
$$ 
Similarly, let $\bar \mu^{(2)}$ be the sequence constructed as above but replacing $r$ with $C_{r,m}^1$, that is, now starting at $(C_{r,m}^1,0,\dots,0)$, inside of $\N^m\times\{n-1\}$. Then, $\bar\mu^{(2)}$ has the form 
$$
\bar\mu^{(2)}=((\mu^{(2)}_1,n-1),\dots,(\mu^{(2)}_{L_2},n-1)).
$$
Continuing in this fashion, we build $\bar \mu^{(j)}$ for $j=3,\dots n$ as the sequence constructed as above but with $C_{r,m}^{j-1}$ instead of $r$, that is, starting at $(C_{r,m}^{j-1},0,\dots,0)$, inside of $\N^m\times\{n-j+1\}$. 

Note that, for each $j=1,\dots,n$, the order of the last element of $\bar\mu^{(j)}$ satisfies $\ord\mu^{(j)}_{L_j}+1=C_{r,m}^{j}$. We now define a new sequence $\bar \mu$ build as the concatenation of $\bar \mu^{(1)},\bar\mu^{(2)},\dots,\bar \mu^{(n)}$. The length of this sequence, denote by $L$, is $L_1+\dots+L_n$ where $L_j$ is the length of $\bar\mu^{(j)}$.

\begin{definition}
By construction, for every $0\leq \tau< m$, there is a unique element of the sequence $\bar\mu^{(n)}\subseteq \N^m\times\{1\}$ of the form
$$((0,\dots,0,u_{m-\tau},0\dots,0),1)$$
where $u_{m-\tau}\neq 0$ is in the $(m-\tau)$-th entry. We let $\M(r,m,n,\tau):=u_{m-\tau}$. Also, we denote by $\l_\tau$ the unique integer $1\leq \l_\tau\leq L_n$ such that 
$$\mu^{(n)}_{\l_\tau}=((0,\dots,0,\M(r,m,n,\tau),0\dots,0),1).$$ 
It is important to point out that this is the first element in the sequence $\bar\mu^{(n)}$ with the property that the first $(m-\tau-1)$ entries are all zero.
\end{definition}

We can now state and prove our main theorem. Recall that, for any $1\leq \ell\leq L_n$, we let $\bar\mu^{(n)}|_{\ell}$ denote the (sub)sequence $\mu^{(n)}_1,\dots,\mu^{(n)}_\ell$. Similarly, for $1\leq \ell\leq L$, we let $\bar\mu|_\ell$ be the subsequence obtained by truncating $\bar\mu$ at level $\ell$.

\begin{theorem}\label{main}
Let $\Sigma\subset K\{x_1,\dots,x_n\}$ be of order at most $r$ and $\P$ a prime component. If $\P$ has differential type $\tau<m$, then, for all $s$,
\begin{equation}\label{tired}
\omega_\P(s)\leq \omega_{\bar\mu^{(n)}|_{\l_\tau}}(s)+\omega_{\bar\mu^{(n-1)}}(s)+\cdots+\omega_{\bar\mu^{(1)}}(s)
\end{equation}
\end{theorem}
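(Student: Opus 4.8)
The strategy is to bound $\omega_\P$ by the Kolchin polynomial of a compressed sequence obtained from the leaders of a characteristic set $\Lambda$ of $\P$, and then to show that this latter polynomial is dominated termwise by the right-hand side of \eqref{tired}. Write $\Lambda$ for a characteristic set of $\P$ and let $E_j\subseteq\N^m$ be the set of $\xi$ with $\d^\xi x_j$ a leader of $\Lambda$; by Fact~\ref{factKol} we have $\omega_\P=\sum_{j=1}^n\omega_{E_j}$, and it suffices to understand $\omega_{E_j}$ in terms of the minimal elements $M_j$ of $E_j$. The first step is to replace each $M_j$ by a \emph{compressed} set $M_j'\subseteq\N^m$ with $\omega_{M_j'}\geq \omega_{M_j}$ (this is standard for Hilbert--Samuel/Kolchin polynomials: compressing can only shrink the volume, hence can only increase the complementary count). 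The key point is then to control, for each $j$, how large a compressed set bounding $M_j$ can be — i.e.\ to control its Hilbert--Samuel function — using the combinatorial structure theorem.

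The heart of the argument is the interaction between Theorem~\ref{combinatorialchar} (on characteristic sets), Theorem~\ref{HS} (the necessary condition of \cite{GO} for maximal Hilbert--Samuel growth), and Macaulay's/Gotzmann's theorems. By Fact~\ref{boundchar} the order of $\Lambda$ is at most $C_{r,m}^n$, and more refined bounds hold variable-by-variable: on $x_n$ the leaders have order $\le$ something like $C_{r,m}^1$, on $x_{n-1}$ order $\le C_{r,m}^2$, etc., because a characteristic set restricted to its top variable behaves like a one-variable system of order $r$, and then the next variable is constrained by a system of order $C_{r,m}^1$, and so on (this is exactly the recursion defining $C_{r,m}^n$). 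Meanwhile, for the specific variable $x_1$, the differential type hypothesis $\tau<m$ forces $\deg\omega_{E_1}\le\tau$, i.e.\ $H_{M_1}$ is eventually zero by order $\mathfrak M(r,m,n,\tau)$-ish; combining this with Theorem~\ref{combinatorialchar}($\dagger$) — which says that past order $r$ the leaders cannot have maximal Hilbert--Samuel growth (via the contrapositive of Theorem~\ref{HS}) — shows that the Hilbert--Samuel function of the $x_1$-leaders is dominated, level by level, by that of $\bar\mu^{(n)}|_{\l_\tau}$. Here is where Lemma~\ref{technical} and Corollary~\ref{techcon} enter: the sequence $\bar\mu$ was designed so that its Hilbert--Samuel function is, at each level, the \emph{largest} possible subject to being bounded by the appropriate $\binom{m-1+d}{d}$-type quantities and failing maximal growth at the right moment; Lemma~\ref{technical} lets us compare the sum of Macaulay-iterated values across the several copies of $\N^m$, which is precisely the multi-variable situation of $\omega_\P=\sum_j\omega_{E_j}$.

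Concretely, the plan is: (1) fix $h$ ranging over $r,r+1,\dots$; (2) at level $h$, use Theorem~\ref{combinatorialchar} to locate a variable $j$ and a pair $\xi,\xi'\in E_j$ witnessing failure of the connectivity condition, hence by Theorem~\ref{HS} a drop below maximal growth $H_{M_j}(h+1)<H_{M_j}(h)^{\la h\ra}$; (3) feed this into a comparison, via Lemma~\ref{technical}, between $\big(H_{M_1}(h),\dots\big)$ summed over the $n$ copies and the corresponding values for $\bar\mu$, exploiting that each $H_{M_j}(h)\le\binom{m-1+h}{h}$ and the total is controlled; (4) invoke Gotzmann (Theorem~\ref{Gotzmann}) to propagate the comparison to all larger $s$ once the relevant orders are exceeded; (5) pass from Hilbert--Samuel functions back to the volume polynomials $\omega$ by summing over $d\le s$, and invoke Proposition~\ref{shape} to identify the resulting polynomial with the right-hand side of \eqref{tired}. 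The constant term accounting in Proposition~\ref{shape} (the ``sum of $m$-th entries'' $=\Vol\bar\mu$) is exactly what matches the typical-dimension bound $\M(r,m,n,\tau)$.

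\textbf{Main obstacle.} The delicate step is (3)--(4): carefully orchestrating the variable-by-variable Hilbert--Samuel comparisons so that a drop at \emph{some} variable $j$ at level $h$ (which is all Theorem~\ref{combinatorialchar} guarantees — not a drop at the specific variable $x_1$) still yields the desired global bound. One must argue that the ``worst case'' for the sum $\sum_j\omega_{E_j}$ is attained when the drop happens as late and as much as possible in the $x_1$-component — which is engineered into $\bar\mu$ — and that moving a drop from variable $j$ to variable $1$ can only increase the total, an extremal/exchange argument for which Lemma~\ref{technical} and Corollary~\ref{techcon} are precisely the combinatorial engine. Handling the book-keeping of \emph{which} level each variable's Hilbert--Samuel function first fails maximal growth, and ensuring the Gotzmann propagation is applied with a consistent choice of $d$ across all $n$ components, is where the proof requires the most care.
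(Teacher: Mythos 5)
Your high-level strategy is aligned with the paper's: compare $\omega_\P=\sum_j\omega_{E_j}$ to the right-hand side of \eqref{tired} level by level via Hilbert--Samuel functions, extract failure of maximal growth from Theorem~\ref{combinatorialchar} and Theorem~\ref{HS}, aggregate across variables with Lemma~\ref{technical} and Corollary~\ref{techcon}, and close with Proposition~\ref{shape}. However, two things go wrong. First, your claim that the leaders of $\Lambda$ obey variable-by-variable order bounds (``on $x_n$ the leaders have order $\le$ something like $C^1_{r,m}$, on $x_{n-1}$ order $\le C^2_{r,m}$, etc.'') is not justified and is in general false: the ranking is orderly, not an elimination ranking, so there is no reason for the leaders involving a particular variable to have smaller order. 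The paper never uses or needs such a bound; instead, it compares the aggregates $H_E(d)=\sum_j H_{E_j}(d)$ against $\sum_j H_{\bar\mu^{(j)}}(d)$ and extracts the variable-by-variable structure a posteriori via the rigidity statement of Corollary~\ref{techcon} (which forces, in the extremal case, the $H_{E_j}(d-1)$'s to be either $0$ or $\binom{m-2+d}{d-1}$, after reordering the $x_j$'s).

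Second, and more seriously, your proposal only invokes Theorem~\ref{combinatorialchar} on a range of levels and then hopes to ``propagate with Gotzmann,'' but Theorem~\ref{combinatorialchar} only applies for $r\le h\le H$, where $H$ is the order of the characteristic set. For $d$ past $H$ but at most $\ord\mu^{(n)}_{\l_\tau}$ you can no longer produce drops from the combinatorial theorem, and this is exactly the regime that needs a genuinely different argument: the paper assumes towards a contradiction that $H_E(d)>H_{\bar\mu|_\ell}(d)$, deduces via Corollary~\ref{techcon} that every $H_{E_j}$ achieves maximal Macaulay growth at $d-1$, invokes Gotzmann to persist this growth for all $s\ge d$, and then identifies the resulting polynomial (via Proposition~\ref{shape}) as having degree strictly greater than $\tau$ — contradicting the \emph{differential type hypothesis} on $\P$. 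Your sketch never uses the hypothesis $\deg\omega_\P=\tau$ as a source of contradiction; it is mentioned only as a soft constraint (``forces $\deg\omega_{E_1}\le\tau$''). Relatedly, you omit the ``furthermore'' clause $(\dagger')$ of Theorem~\ref{combinatorialchar}, which is what permits the crucial comparison at levels $d$ of the form $\ord\mu^{(i)}_1$ (where the benchmark $H_{\bar\mu|_\ell}$ drops by $2$ rather than $1$, cf.\ \eqref{notwish}). You correctly identify that a drop at an arbitrary variable $j$ is ``all the theorem guarantees,'' and flag the required exchange argument as the main obstacle, but the proposal does not carry it out — it is precisely Corollary~\ref{techcon} together with the dichotomy $d\le H$ versus $d>H$ that resolve it, and in the latter regime your outline would stall.
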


\begin{remark}\label{simple} It is easy to see that the theorem holds when $m=1$ or $r=0$.
\begin{enumerate}
\item In the case when $m=1$, we have $\tau=0$ and $\bar\mu^{(j)}=(r,n-j+1)$ for all $j$. Also, $\bar\mu^{(n)}|_{\l_\tau}=(r,1)$, and so the right-hand-side of \eqref{tired} becomes $nr$. Hence, in this case, inequality \eqref{tired} holds by Remark \ref{ritt}.
\item In the case when $r=0$, we have $\bar\mu^{(j)}=((0,\dots,0),n-j+1)$ for all $j$. So the right-hand-side of \eqref{tired} becomes zero. On the other hand, Fact \ref{boundchar} tells us that a characteristic set of $\P$ must have order zero, and so, as $\tau<m$, we have $\omega_\P=0$.
\end{enumerate}
\end{remark}

\begin{proof}[Proof of Theorem \ref{main}]
By Remark \ref{simple}, we assume that $m>1$ and $r>0$. Let $E_j$ be the set of $\xi\in \N^m$ such that $\d^\xi x_j$ is a leader of a characteristic set $\L$ of $\P$. Recall that by Fact \ref{factKol} we have $\omega_\P=\sum_{j=1}^n\omega_{E_j}$. Let $E=E_1\times\{n\}\cup\cdots\cup E_n\times\{1\}\subseteq \Nn$, and define the function $H_E:\N\to\N$ by $H_E(d)=\sum_{j=1}^nH_{E_j}(d)$, where $H_{E_j}$ is the Hilbert-Samuel function of $E_j$. Similarly, let $\ell$ be the length of the (concatenated) sequence $\bar\mu^{(1)},\bar\mu^{(2)},\dots,\bar\mu^{(n)}|_{\l_\tau}$, and set $H_{\bar\mu|_\ell}:\N\to\N$ to be the function defined by 
$$H_{\bar\mu|_\ell}(d)=H_{\bar\mu^{(1)}}(d)+\cdots+H_{\bar\mu^{(n-1)}}(d)+H_{\bar\mu^{(n)}|_{\l_\tau}}(d).$$

We first make a few observations on $H_{\bar\mu|_\ell}$, and explain how Lemma~\ref{technical} and Corollary~\ref{techcon} will be applied. Note that if $0\leq d<\ord\mu^{(1)}_1=r$ then all the $H_{\bar\mu^{(j)}}(d)$'s, as well as $H_{\bar\mu^{(n)}}|_{\l_\tau}(d)$, equal $\displaystyle \binom{m-1+d}{d}$, the number of $m$-tuples of order $d$. Furthermore, when $\ord\mu^{(j)}_{L_j}\leq d\leq \ord\mu^{(j+1)}_{L_{j+1}}$, we get
$$0=H_{\bar\mu^{(1)}}(d)=\cdots=H_{\bar\mu^{(j)}}(d)\leq H_{\bar\mu^{(j+1)}}(d)\leq H_{\bar\mu^{(j+2)}}(d)=\cdots=H_{\bar\mu^{(n)}}(d)=\binom{m-1+d}{d}$$
and when $d=\ord\mu^{(j)}_{L_j}$ the second inequality becomes equality. Thus, where appropriate, we will apply Lemma~\ref{technical} and Corollary~\ref{techcon} with the $H_{\bar\mu^{(j)}}(d)$'s in place of the $b_j$'s, and the $H_{E_j}(d)$'s in place of the $a_j$'s, for a fixed $d$. For instance, if $H_E(d)\leq H_{\bar\mu|_\ell}(d)$ then Lemma~\ref{technical} implies that:
$$H_{E_1}(d)^{\la d\ra}+\cdots+H_{E_n}(d)^{\la d\ra}\leq H_{\bar\mu^{(1)}}(d)^{\la d\ra}+\cdots+H_{\bar\mu^{(n-1)}}(d)^{\la d\ra}+H_{\bar\mu^{(n)}|_{\ell_\tau}}(d)^{\la d\ra}$$
This fact will be used repeatedly throughout the proof.

On the other hand, we can compute $H_{\bar\mu|_{\ell}}(d)$ in terms of $H_{\bar\mu|_{\ell}}(d-1)$ as follows: when $d\neq \ord\mu_{1}^{(i)}$ and $d\leq \ord\mu^{(n)}_{\l_\tau}$ we get, by Macaulay's theorem (see Theorem \ref{Macaulay}) and the fact that each $\bar\mu^{(j)}$ is compressed, 
\begin{equation}\label{wish}
H_{\bar\mu|_\ell}(d)=H_{\bar\mu^{(n)}|_{\ell_\tau}}(d-1)^{\la d-1\ra} + \sum_{j=1}^{n-1}H_{\bar\mu^{(j)}}(d-1)^{\la d-1\ra}-1,
\end{equation}
and when $d= \ord\mu_{1}^{(i)}$ for some $i$, as $m\geq 2$, we get
\begin{equation}\label{notwish}
H_{\bar\mu|_\ell}(d)=H_{\bar\mu^{(n)}|_{\ell_\tau}}(d-1)^{\la d-1\ra} + \sum_{j=1}^{n-1}H_{\bar\mu^{(j)}}(d-1)^{\la d-1\ra}-2,
\end{equation}
finally, when $d> \ord\mu^{(n)}|_{\l_\tau}$ we get
\begin{equation}\label{finalwish}
H_{\bar\mu|_\ell}(d)=H_{\bar\mu^{(n)}|_{\ell_\tau}}(d-1)^{\la d-1\ra} + \sum_{j=1}^{n-1}H_{\bar\mu^{(j)}}(d-1)^{\la d-1\ra}\end{equation}

We now go back to the proof. It suffices to show that for all $d\geq 0$
\begin{equation}\label{mainineq}
H_E(d)\leq H_{\bar\mu|_\ell}(d)
\end{equation}
Indeed, if this is the case, for all $s$ we get
$$\omega_\P(s)=\sum_{j=1}^n\omega_{E_j}(s)=\sum_{d=0}^sH_E(d)\leq \sum_{d=0}^sH_{\bar\mu|_\ell}(d)=\omega_{\bar\mu^{(n)}|_{\l_\tau}}(s)+\omega_{\bar\mu^{(n-1)}}(s)+\cdots+\omega_{\bar\mu^{(1)}}(s).$$

Let $H$ be the order of $\Lambda$ (recall this is a characteristic set of $\P$). To prove \eqref{mainineq}, we proceed by induction on $d$, we consider fours cases: $0\leq d\leq r$, $r< d\leq H$, $H< d\leq \ord\mu^{(n)}_{\l_\tau}$, and finally $d>\ord\mu^{(n)}_{\l_\tau}$. 

\medskip

\noindent \underline{Case 1: $0\leq d\leq r$.} When $0\leq d<r$, there are no elements of $\bar\mu$ of order less than or equal to $d$, so 
$$H_{\bar\mu|_{\ell}}(d)=n\cdot\binom{m-1+d}{d},$$
where the binomial expression gives the number of $m$-tuples of $\N^m$ of order $d$. Hence, in this case, we clearly have $H_E(d)\leq H_{\bar\mu|_{\ell}}(d)$. Now assume $d=r$. We consider two cases: when $|E|=1$ and when $|E|>1$. First, suppose $|E|=1$. Since $\tau<m$, in this case we must have $n=1$, and so we can write $E=\{\xi\}$ for some $\xi\in \N^m$. Moreover, by Theorem~\ref{combinatorialchar}, $\ord\xi\leq r$. We may assume $\ord\xi>0$ (otherwise \eqref{mainineq} is obviously true). This implies that $\tau=m-1$, and so $\l_\tau=1$, which in turn implies $\bar\mu_{\ell}=\bar\mu_{\l_\tau}=\{(r,0,\dots,0)\}$. This shows that
$$H_{\bar\mu|_{\ell}}(r)=\binom{m-1+r}{r}-1,$$
and so $H_E(r)\leq H_{\bar\mu|_{\ell}}(r)$. Now assume $|E|\geq 2$. By Theorem \ref{combinatorialchar}, $E$ must have two elements of order at most $r$. On the other hand, $\bar\mu|_\ell$ has no elements of order less than $r$ and exactly two of order $r$. It follows that $H_E(r)\leq H_{\bar\mu|_{\ell}}(r)$.

\medskip

\noindent \underline{Case 2: $r< d\leq H$.} In this case we can apply the ``furthermore'' clause of Theorem~\ref{combinatorialchar} (with $d-1$ in place of $h$). So, either $E$ has an element of order $d$, or there is $j_1$ and distinct $\xi,\xi'\in E_{j_1}$ of order at most $d-1$ satisfying property ($\dagger'$) of that theorem. But now, recalling that we are assuming $r>0$ and so $d>1$, Theorem~\ref{HS} implies that either $E$ has an element of order $d$ or
$$H_{E_{j_1}}(d)<H_{E_{j_1}}(d-1)^{\la d-1\ra}.$$
In both cases we get
\begin{equation}\label{anycase}
H_E(d)\leq \sum_{j=1}^nH_{E_{j}}(d-1)^{\la d-1\ra}-1
\end{equation}
We now consider two cases. For the first case, suppose $d\neq \ord\mu^{(i)}_{1}$ for $i=1,\dots,n$. In this case we have \eqref{wish}. Induction, together with Macaulay's theorem and Lemma~\ref{technical}, yield
$$H_E(d)\leq \sum_{j=1}^nH_{E_j}(d-1)^{\la d-1\ra}-1\leq H_{\bar\mu|_{\ell}}(d),$$
where the first inequality uses \eqref{anycase}. This yields the desired inequality.
For the second case, suppose $d=\ord\mu^{(i)}_{1}$ for some $i$. We now have \eqref{notwish}. Towards a contradiction, assume $H_E(d)>H_{\bar\mu|_\ell}(d)$. We then have, again by induction, Macaulay's theorem and Lemma~\ref{technical}, 
$$H_E(d)\leq \sum_{j=1}^nH_{E_j}(d-1)^{\la d-1\ra}-1\leq H_{\bar\mu_\ell}(d)+1\leq H_E(d).$$
Thus all of the above are equalities, and so
$$\sum_{j=1}^nH_{E_j}(d-1)^{\la d-1\ra} = H_{\bar\mu^{(n)}|_{\l_\tau}}(d-1)^{\la d-1\ra}+\sum_{j=1}^{n-1}H_{\bar\mu^{(j)}}(d-1)^{\la d-1\ra}.$$ 
By Corollary \ref{techcon}, after possibly reordering the $E_j$'s (or the variables $x_j$'s rather), we get $H_{E_j}(d-1)=H_{\bar\mu^{(j)}}(d-1)$, for all $j=1\dots,n-1$, and $H_{E_n}(d-1)=H_{\bar\mu^{(n)}|_{\l_\tau}}(d-1)$. In other words, 
$$H_{E_j}(d-1)=0\; \text{ for }j<i, \; \text{ and } \; H_E^{(j)}(d-1)=\binom{m-2+d}{d-1} \; \text{ for }j\geq i.$$
Now, by Theorem \ref{combinatorialchar}, there must be $j_2\geq i$ such that $E_{j_2}$ has two distinct elements of order equal to $d$. This implies that 
$$H_{E_{j_2}}(d)\leq H_{E_{j_2}}(d-1)^{\la d-1\ra}-2,$$
and so (by induction, Macaulay's theorem, Lemma~\ref{technical}, and \eqref{notwish})
$$H_E(d)\leq \sum_{j=1}^nH_{E_j}(d-1)^{\la d-1\ra}-2\leq H_{\bar\mu|_{\ell}}(d).$$
We have reached the desired contradiction.

\medskip

\noindent \underline{Case 3: $H< d\leq \ord\mu^{(n)}_{\l_\tau}$.} Assume, towards a contradiction, that $H_E(d)>H_{\bar\mu|_\ell}(d)$. We claim that then
\begin{equation}\label{first}
H_E(d-1)=H_{\bar\mu|_\ell}(d-1).
\end{equation}
and
\begin{equation}\label{find}
H_{E_j}(d)=H_{E_j}(d-1)^{\la d-1\ra}\quad \text{ for }j=1,\dots,n.
\end{equation}

We consider, as in Case 2, two cases. First assume $d\neq \ord\mu_1^{(i)}$ for any $i=1,\dots,n$. Again we will use \eqref{wish}. If \eqref{first} does not hold, induction, together with Macaulay's theorem and Lemma~\ref{technical}, yield the following contradiction
$$H_E(d)\leq \sum_{j=1}^nH_{E_j}(d-1)^{\la d-1\ra}\leq  H_{\bar\mu|_\ell}(d)< H_E(d)$$
On the other hand, if \eqref{find} does not hold then induction yields the following contradiction
$$H_E(d)\leq \sum_{j=1}^nH_{E_j}(d-1)^{\la d-1\ra}-1\leq  H_{\bar\mu_\ell}(d)< H_E(d).$$

Now assume $d=\ord\mu_1^{(i)}$ for some $i$. Assume, towards a contradiction, that \eqref{first} does not hold. Then, by induction,
$$H_E(d)\leq \sum_{j=1}^nH_{E_j}(d-1)^{\la d-1\ra}\leq H_{\bar\mu_\ell}(d)+1\leq H_E(d).$$
Thus all the above inequalities become equality, and so
$$\sum_{j=1}^nH_{E_j}(d-1)^{\la d-1\ra}+1 = H_{\bar\mu^{(n)}|_{\l_\tau}}(d-1)^{\la d-1\ra}+\sum_{j=1}^{n-1}H_{\bar\mu^{(j)}}(d-1)^{\la d-1\ra}.$$ 
Corollary \ref{techcon} now yields that $\binom{m-2+d}{d-1}=1$ but this is impossible since $m\geq 2$ and $d>1$. On the other hand, assume, towards a contradiction, that \eqref{find} does not hold. By induction,
$$H_E(d)\leq \sum_{j=1}^nH_{E_j}(d-1)^{\la d-1\ra}-1\leq  H_{\bar\mu_\ell}(d)+1\leq H_E(d).$$
Thus, the above are all equalities, and, as in Case 2, Corollary \ref{techcon} yields that (after possibly reordering the variables $x_j$'s)
$$H_{E_j}(d-1)=0\; \text{ for }j<i, \; \text{ and } \; H_{E_j}(d-1)=\binom{m-2+d}{d-1} \; \text{ for }j\geq i.$$
But now, since $d>H$ and so there are no elements in $E$ of order $d$, we get $H_{E_j}(d)=H_{E_j}(d-1)^{\la d-1\ra}$ for all $j$, which is the desired contradiction. 

We have thus shown \eqref{first} and \eqref{find}. Since $d>H$, Gotzmann's persistence theorem (see Theorem \ref{Gotzmann}) shows that for all $s\geq d$ we have 
$$H_{E_j}(s)=H_{E_j}(s-1)^{\la s-1\ra},\quad \text{ for all } j.$$
This shows that if we choose $\ell'$ to be maximal such that $d-1=\ord \mu_{\ell'}$, then $H_E(s)=H_{\bar\mu|_{\ell'}}(s)$ for all $s\geq d$, where 
$$H_{\bar\mu|_{\ell'}}(s):=H_{\bar\mu^{(1)}}(s)+\cdots+H_{\bar\mu^{(i-1)}}(s)+H_{\bar\mu^{(i)}|_{\l'}}(s)$$
and $i$ and $\l'$ are such that $\bar\mu|_{\ell'}$ is the concatenation of $\bar\mu^{(1)},\dots,\bar\mu^{(i-1)},\bar\mu^{(i)}|_{\l'}$.
This implies that $\sum_{j=1}^n\omega_{E_j}$ and $\omega_{\bar\mu^{(i)}|_{\l'}}+\omega_{\bar\mu^{(i-1)}}+\cdots+\omega_{\bar\mu^{(1)}}$ differ by a constant. Note that $\ell'<\ell$, and so, by Proposition \ref{shape}, the former numerical polynomial has degree strictly larger than $\tau$ (and consequently so does the former polynomial). This contradicts the fact that $\P$ has differential type $\tau$. This finishes the proof of Case~3.

\medskip

\noindent \underline{Case 4: $d>\ord\mu^{(n)}_{\l_\tau}$.} In this case we have \eqref{finalwish}, and so, by induction, Macaulay's theorem and Fact \ref{technical}, we get
$$H_E(d)\leq \sum_{j=1}^nH_{E_j}(d-1)^{\la d-1\ra}\leq H_{\bar\mu_\ell}(d).$$
This completes the proof.
\end{proof}

In the following corollary by the \emph{volume of $\bar\mu$} we mean 
$$\Vol\bar \mu=\Vol\bar\mu^{(1)}+\cdots+\Vol\bar\mu^{(n)}.$$
We also recall that, for $0\leq\tau<m$, we denote by $\M(r,m,n,\tau)$ the unique nonzero entry of $\mu^{(n)}_{\l_\tau}$.

\begin{corollary}\label{finally}
Let $\Sigma\subset K\{x_1,\dots,x_n\}$ be of order at most $r$ and $\P$ a prime component. If $\P$ has differential type $0<\tau<m$, then the typical differential dimension $a_\tau$ of $\P$ satisfies
$$a_\tau\leq \M(r,m,n,\tau)$$
Furthermore, if $\tau=0$, then 
$$a_0\leq \Vol (\bar \mu)$$
\end{corollary}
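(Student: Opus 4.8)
The plan is to derive both inequalities straight from Theorem~\ref{main} by computing the degree and leading standard coefficient of the right-hand side of \eqref{tired}. First I would apply Theorem~\ref{main} to obtain, for all $s$,
$$\omega_\P(s)\le \omega_{\bar\mu^{(n)}|_{\l_\tau}}(s)+\omega_{\bar\mu^{(n-1)}}(s)+\cdots+\omega_{\bar\mu^{(1)}}(s).$$
The crucial input is Proposition~\ref{shape}: by its ``in particular'' clause each of $\omega_{\bar\mu^{(1)}},\dots,\omega_{\bar\mu^{(n-1)}}$ is a constant, equal to $\Vol\bar\mu^{(j)}$, so these summands contribute only to the constant term of the right-hand side. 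Thus everything reduces to understanding $\omega_{\bar\mu^{(n)}|_{\l_\tau}}$.

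For the case $0<\tau<m$, recall that by the definition of $\l_\tau$ the tuple $\mu^{(n)}_{\l_\tau}=(c_1,\dots,c_m)$ has its unique nonzero coordinate, equal to $\M(r,m,n,\tau)$, in position $m-\tau$. Substituting this into the formula of Proposition~\ref{shape} collapses the sum to the single surviving term $i=\tau$, so that
$$\omega_{\bar\mu^{(n)}|_{\l_\tau}}(t)=\M(r,m,n,\tau)\binom{t+\tau}{\tau}+c$$
for a nonnegative constant $c$. Hence the right-hand side of \eqref{tired} is a numerical polynomial of degree $\tau$ whose leading standard coefficient is exactly $\M(r,m,n,\tau)$. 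Since $\omega_\P$ has degree $\tau$ with leading standard coefficient $a_\tau$, I would finish by dividing the inequality by $\binom{s+\tau}{\tau}$ and letting $s\to\infty$, which forces $a_\tau\le \M(r,m,n,\tau)$.

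For the case $\tau=0$, the point is that $\l_0=L_n$: here $\mu^{(n)}_{\l_0}$ has the shape $(0,\dots,0,u_m)$, and since this is precisely the form of the last (and only such) element of $\bar\mu^{(n)}$, the truncation $\bar\mu^{(n)}|_{\l_0}$ is the full sequence $\bar\mu^{(n)}$. Then the right-hand side of \eqref{tired} is $\sum_{j=1}^n\omega_{\bar\mu^{(j)}}(s)$, which by Proposition~\ref{shape} equals $\sum_{j=1}^n\Vol\bar\mu^{(j)}=\Vol\bar\mu$. As $\omega_\P$ has degree $0$ it is the constant $a_0$, and therefore $a_0\le\Vol\bar\mu$.

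I do not anticipate a real obstacle: all the analytic content lives in Theorem~\ref{main} and Proposition~\ref{shape}. The only care needed is the bookkeeping in the two case analyses — tracking which binomial term in Proposition~\ref{shape}'s formula survives when $\mu^{(n)}_{\l_\tau}$ has a single nonzero entry, and verifying that $\l_0$ indeed reaches the end of $\bar\mu^{(n)}$.
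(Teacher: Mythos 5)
Your proposal is correct and follows essentially the same route as the paper: apply Theorem~\ref{main}, use Proposition~\ref{shape} to identify $\omega_{\bar\mu^{(1)}},\dots,\omega_{\bar\mu^{(n-1)}}$ as constants and $\omega_{\bar\mu^{(n)}|_{\l_\tau}}(t)$ as $\M(r,m,n,\tau)\binom{t+\tau}{\tau}+c$, then compare leading standard coefficients (or, in the $\tau=0$ case, observe $\l_0=L_n$ and read off the constant $\Vol\bar\mu$). The only cosmetic difference is that you spell out the ``divide by $\binom{s+\tau}{\tau}$ and let $s\to\infty$'' step, which the paper leaves implicit.
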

\begin{proof}
First suppose $0<\tau<m$. Recall that $\mu^{(n)}_{\l_\tau}$ is of the form
$$((0,\dots,0,\M(r,m,n,\tau),0,\dots,0),1)$$
where $\M(r,m,n,\tau)\neq 0$ is in the $(m-\tau)$-th position. By Proposition~\ref{shape}, we have that
$$\omega_{\bar\mu^{(n)}|_{\l_\tau}}(t)=\M(r,m,n,\tau)\binom{t+\tau}{\tau}+c$$
Since all the other polynomials $\omega_{\bar\mu^{(1)}},\dots,\omega_{\bar\mu^{(n-1)}}$ are constants (again by Proposition~\ref{shape}), Theorem~\ref{main} yields
$$a_\tau\leq \M(r,m,n,\tau).$$
Now assume $\tau=0$. In this case $\l_\tau=L_n$ (the length of $\bar\mu^{(n)}$) and so $\bar\mu^{(n)}|_{\l_\tau}=\bar\mu^{(n)}$. By Proposition \ref{shape}, $\omega_{\bar\mu^{(j)}}=\Vol\bar\mu^{(j)}$ for $j=1,\dots,n$. Thus, by Theorem \ref{main}, $a_0\leq \Vol\bar\mu$.
\end{proof}

\begin{remark}
When $n=1$ and $\tau=m-1$, we get $\mu_{\l_\tau}=(r,0,\dots,0)$, and so the corollary yields $a_{m-1}\leq r$. Thus, in this case, we recover Kolchin's result (see Fact~\ref{introfact}).
\end{remark}

\section{Some computations}\label{compu}

In this section we provide recursive algorithms and formulas that compute the value of $\M(r,m,n,\tau)$ and $\Vol\bar\mu$. By Corollary \ref{finally}, this will yield effectively computable upper bounds for the typical differential dimension of any prime component, of differential type $\tau$, of a differential system of order at most $r$ in $n$ variables and $m$ derivations.  We assume the notation and terminology used in previous sections.

Note that, by Corollary \ref{mainrough}, in the case $m=1$ we already have optimal upper bounds at our disposal. Namely, $a_1+a_0\leq nr$, where $a_1$ and $a_0$ are the standard coefficients of the Kolchin polynomial of $\P$; that is, $\omega_\P(t)=a_1(t+1)+a_0$.  Furthermore, for arbitrary $m$, Fact \ref{introfact} tells us that if $\tau=m$ then $a_\tau\leq n$, and if $\tau=m-1$ then $a_{\tau}\leq nr$. Both of these bounds are optimal. Thus, in this section, we focus in the cases when $0\leq \tau\leq m-2$ for $m\geq 2$. We also assume $r>0$ (the case $r=0$ is trivial as, under the assumption $\tau<m$, we get $\omega_\P=0$ ).

\subsection{The case $m=2$}
By the above reductions, we only need to consider the case when $\tau=0$. By Corollary~\ref{finally}, we have $a_0\leq \Vol(\bar\mu)$. To compute the latter note that $\bar\mu^{(1)}$ is given by
$$(r,0),(r-1,1),(r-2,3),\dots, (1,2r-3), (0,2r-1)$$
inside the $n$-th copy of $\N^m$; i.e., $\N^m\times\{n\}$. So $\Vol\bar\mu^{(1)}=r^2$. Similary, inside the $(n-1)$-th copy of $\N^m$, $\bar\mu^{(2)}$ is given by
$$(2r,0),(2r-1,1),(2r-2,3),\dots, (1,4r-3), (0,4r-1)$$
and so $\Vol\bar\mu^{(2)}=4r^2$. Continuing in this fashion, we find that $\Vol\bar\mu^{(i)}=4^{i-1}r^2$ for $i=1,\dots,n$. Therefore
$$\Vol\bar\mu=\sum_{i=0}^{n-1}4^ir^2=\left(\frac{4^n-1}{3}\right)r^2.$$
We can conclude
\begin{corollary}\label{final2}
Suppose $m=2$, that is $\D=\{\d_1,\d_2\}$. Let $\Sigma\subset K\{x_1,\dots,x_n\}$ be of order at most $r$ and $\P$ a prime component. If $\P$ has differential type $\tau=0$, then its typical differential dimension satisfies
$$a_0\leq \left(\frac{4^n-1}{3}\right)r^2.$$
\end{corollary}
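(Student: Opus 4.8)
The plan is to apply Corollary~\ref{finally} in the special case $m=2$ and $\tau=0$, which reduces everything to an explicit computation of $\Vol\bar\mu$. So the first step is simply to invoke that corollary: since $\P$ has differential type $\tau=0<m=2$, we immediately get $a_0\leq \Vol(\bar\mu)=\Vol\bar\mu^{(1)}+\cdots+\Vol\bar\mu^{(n)}$. The remaining work is purely combinatorial: compute each $\Vol\bar\mu^{(i)}$ and sum.

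Next I would pin down the sequences $\bar\mu^{(i)}$ when $m=2$. By construction, $\bar\mu^{(1)}$ is the sequence built as in Section~\ref{several} starting at $(r,0)$, which (as recorded in the excerpt) is
$$
(r,0),\ (r-1,1),\ (r-2,3),\ \dots,\ (1,2r-3),\ (0,2r-1),
$$
of length $r+1$. By Remark~\ref{vol}, $\Vol\bar\mu^{(1)}$ equals the sum of the second coordinates, i.e. $0+1+3+5+\cdots+(2r-1) = \sum_{k=0}^{r-1}(2k+1) = r^2$. Then $\bar\mu^{(2)}$ is built the same way but starting at $(C_{r,2}^1,0)=(2r,0)$ (using $C_{r,2}^n = 2^n r$, hence $C_{r,2}^1 = 2r$), so its second coordinates sum to $(2r)^2 = 4r^2$. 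Inductively, $\bar\mu^{(i)}$ starts at $(C_{r,2}^{i-1},0)=(2^{i-1}r,0)$, and the same arithmetic-progression computation gives $\Vol\bar\mu^{(i)} = (2^{i-1}r)^2 = 4^{i-1}r^2$.

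Finally I would sum: $\Vol\bar\mu = \sum_{i=1}^{n} 4^{i-1}r^2 = r^2\sum_{i=0}^{n-1}4^i = \left(\frac{4^n-1}{3}\right)r^2$, which yields $a_0\leq \left(\frac{4^n-1}{3}\right)r^2$ and completes the proof. There is essentially no obstacle here — the only points requiring a line of justification are (a) that $C_{r,2}^{i-1}=2^{i-1}r$, which is the explicit formula $C_{r,2}^n=2^n r$ quoted in Section~\ref{rough}, and (b) that the $m$-th (here second) coordinates of the sequence $\bar\mu^{(i)}$ are exactly $0,1,3,5,\dots,2k-1$ up to length $k+1$ where $k = 2^{i-1}r$, which follows directly from the explicit recursion (i) and (ii) describing $\bar\mu$ in Section~\ref{several} (in two variables, rule (ii) applies: $(u_1,u_2)\mapsto(u_1-1,u_2+2)$, starting from $(k,0)$). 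The heavy lifting has all been done in Theorem~\ref{main} and Corollary~\ref{finally}; this corollary is just the concrete payoff.
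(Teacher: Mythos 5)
Your proposal is correct and follows essentially the same route as the paper: invoke Corollary~\ref{finally} to reduce to computing $\Vol\bar\mu$, list the explicit two-variable sequence $\bar\mu^{(i)}$ starting from $(C_{r,2}^{i-1},0)=(2^{i-1}r,0)$, use Remark~\ref{vol} to read off $\Vol\bar\mu^{(i)}=4^{i-1}r^2$, and sum the geometric series. No gaps; the only difference is that you cite Remark~\ref{vol} and the formula $C_{r,2}^n=2^nr$ explicitly, which the paper leaves implicit.
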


\medskip

\begin{remark}
In the case when $n=1$ the above bound reduces to $a_0\leq r^2$. By Example \ref{introex}, this is optimal. So now we ask the question: is the bound of Corollary~\ref{final2} optimal for $n\geq 2$? This question remains open and it is the subject of an ongoing research project.
\end{remark}

\subsection{The case $m\geq 3$}

We fix the differential type $0\leq \tau\leq m-2$. Recall that the first element of the sequence $\bar\mu^{(n)}$ inside of $\N^m\times\{1\}$ has the form $(C_{r,m}^{n-1},0,\dots,0)$, where $C_{r,m}^0:=r$ and for $n>0$, as in Section \ref{rough}, 
$$C_{0,m}^1=0, \quad\; C_{r,m}^1=A(m-1,C_{r-1,m}^1), \quad \text{ and } \quad C_{r,m}^n=C_{C_{r,m}^{n-1},m}^1,$$
where $A$ is the Ackermann function.

Due to the recursive nature of the construction of the sequence $\bar\mu^{(n)}$, the following algorithm yields the \emph{unique nonzero entry} of $\bar\mu^{(n)}_{\l_\tau}$; in other words, $\M(r,m,n,\tau)$. 

Let $\Omega_{r,m}^\tau:\N^{m}\to \N$ be given by
$$
\Omega_{r,m}^\tau(0,\dots,0,u_{m-\tau},u_{m-\tau-1},\dots,u_m)=u_{m-\tau}
$$
with
\begin{align*}
\Omega_{r,m}^\tau&(u_1,\dots,u_s,0,\dots,0,u_m)\\&=\Omega_{r,m}^\tau(u_1,\dots,u_s-1,u_m+2,0,\dots,0),\quad s<m-1, \ u_s>0
\end{align*}
 and 
\begin{align*}
\Omega_{r,m}^\tau&(u_1,\dots,u_{m-1},u_m)\\&=\Omega_{r,m}^\tau(u_1,\dots,u_{m-1}-1,u_m+2),\quad u_{m-1}>0. 
\end{align*}
Then, $\M(r,m,n,\tau)=\Omega_{r,m}^\tau(C_{r,m}^{n-1}-1,1,0,\dots,0)$.

By Remark \ref{vol}, the following algorithm yields the volume of $\bar\mu$.

Let $\Upsilon_{r,m}:\N\times\N^{m}\to \N$ be given by
$$
\Upsilon_{r,m}(j,(0,\dots,0,u_m))=j
$$
with
\begin{align*}
\Upsilon_{r,m}&(j,(u_1,\dots,u_s,0,\dots,0,u_m))\\&=\Upsilon_{r,m}(j,(u_1,\dots,u_s-1,u_m+2,0,\dots,0)),\quad s<m-1, \ u_s>0
\end{align*}
 and 
\begin{align*}
\Upsilon_{r,m}&(j,(u_1,\dots,u_{m-1},u_m))\\&=\Upsilon_{r,m}(j+u_m+2,(u_1,\dots,u_{m-1}-1,u_m+2)),\quad u_{m-1}>0. 
\end{align*}

Then, for each $i=1,\dots,n$, 
$$\Vol\bar\mu^{(i)}=\Upsilon_{r,m}(0,(C_{r,m}^{i-1}-1,1,0,\dots,0)),$$
and so
$$\Vol\bar\mu=\sum_{i=1}^n \Upsilon_{r,m}(0,(C_{r,m}^{i-1}-1,1,0,\dots,0)).$$

We will now give more explicit formulas using the Ackermann function and $C_{r,m}^n$. Let $B_{r,m,n}^1=C_{r,m}^{n-1}$,
$$B_{r,m,n}^2=C_{B_{r,m,n}^1-1,m}^1+1,$$ 
and for $3\leq i \leq m$
\begin{equation}\label{compbound}
B_{r,m,n}^i=A(m-i+2,B_{r,m,n}^{i-1}-1)+1
\end{equation}
By \cite[Corollary 7.4]{MS}, we have
$$\M(r,m,n,\tau)=B_{r,m,n}^{m-\tau}.$$

In order to give a formula for the volume of $\bar\mu$, we extend the domain of the Ackermann function to include points of  the form $(x,-1)$ for nonnegative integers $x$. Following \cite[\S2]{MS}, we set $A(0,1)=0$, and $A(x,-1)=1$ for $x>0$. Now let $F(0,y)=1$ and 
\begin{equation}\label{comp1}
F(x,y)=\sum_{i=-1}^{y-1}F(x-1,A(x,i))
\end{equation}
Also, define $\nu(x,y)$ by $\nu(x,0)=0$ and, for $x,y>0$,
\begin{equation}\label{comp2}
\nu(x,y)=F(x-1,C_{y-1,x}^1+1)+\nu(x,y-1).
\end{equation}
Then, by \cite[Corollary 7.7]{MS}, we have that for each $i=1,\dots,n$
$$\Vol\bar\mu^{(i)} =\nu(m,C_{r,m}^{i-1}),$$
 and so
 $$\Vol\bar\mu=\sum_{i=1}^n \nu(m,C_{r,m}^{i-1}).$$

Let us state the above results as
\begin{corollary}\label{question}
Suppose $m\geq 3$. Let $\Sigma\subset K\{x_1,\dots,x_n\}$ be of order at most $r$ and $\P$ a prime component. If $\P$ has differential type $1\leq\tau\leq m-2$, then its typical differential dimension satisfies
$$a_\tau\leq B_{r,m,n}^{m-\tau}$$
with $B_{r,m,n}^{i}$ given in \eqref{compbound}. Furthermore, if $\tau=0$ then
$$a_0\leq \sum_{i=1}^n \nu(m,C_{r,m}^{i-1})$$
with $\nu$ given in \eqref{comp1} and \eqref{comp2}.
\end{corollary}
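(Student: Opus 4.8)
The plan is to deduce Corollary~\ref{question} directly from Corollary~\ref{finally}, by evaluating the two quantities $\M(r,m,n,\tau)$ and $\Vol\bar\mu$ via the recursive algorithms set up above and then converting those algorithmic outputs into the claimed closed forms $B_{r,m,n}^{m-\tau}$ and $\sum_{i=1}^n\nu(m,C_{r,m}^{i-1})$. Thus the substance of the proof is entirely combinatorial bookkeeping about the sequence $\bar\mu$, with the model theory already packaged inside Corollary~\ref{finally}.

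First I would treat the case $1\le\tau\le m-2$. By Corollary~\ref{finally} it suffices to show $\M(r,m,n,\tau)=B_{r,m,n}^{m-\tau}$. Recall that $\M(r,m,n,\tau)$ is the unique nonzero entry of $\mu^{(n)}_{\l_\tau}$, where $\mu^{(n)}_{\l_\tau}$ is the first tuple of $\bar\mu^{(n)}$ whose first $m-\tau-1$ coordinates all vanish. I would trace the construction rules (i) and (ii) of $\bar\mu$: starting from $(C_{r,m}^{n-1},0,\dots,0)$, the sequence proceeds by decrementing the leading nonzero coordinate and transferring its ``mass'' to a later coordinate slot, with an additive correction of $2$ at each step. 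The function $\Omega_{r,m}^\tau$ is precisely the invariant of this process that reads off the surviving entry once the first $m-\tau-1$ coordinates have been zeroed out, so $\M(r,m,n,\tau)=\Omega_{r,m}^\tau(C_{r,m}^{n-1}-1,1,0,\dots,0)$. A careful induction on the coordinate currently being emptied, using the defining recursion $C_{r,m}^1=A(m-1,C_{r-1,m}^1)$ and unwinding one ``level'' at a time, identifies the value produced after the $j$-th coordinate is exhausted with $B_{r,m,n}^{j+1}$ as in \eqref{compbound}; this is the content of \cite[Corollary~7.4]{MS}, which I would invoke, obtaining $\M(r,m,n,\tau)=B_{r,m,n}^{m-\tau}$.

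Next I would treat $\tau=0$. Here $\l_\tau=L_n$, so $\bar\mu^{(n)}|_{\l_\tau}=\bar\mu^{(n)}$, and Corollary~\ref{finally} gives $a_0\le\Vol\bar\mu=\sum_{i=1}^n\Vol\bar\mu^{(i)}$. By Remark~\ref{vol}, $\Vol\bar\mu^{(i)}$ equals the sum of the $m$-th entries of the tuples appearing in $\bar\mu^{(i)}$, and the algorithm $\Upsilon_{r,m}$ accumulates exactly this sum as it walks through the sequence (the increment $u_m+2$ in rule~(ii) being precisely the new $m$-th entry created at that step), so $\Vol\bar\mu^{(i)}=\Upsilon_{r,m}(0,(C_{r,m}^{i-1}-1,1,0,\dots,0))$. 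Converting this into the closed form $\nu(m,C_{r,m}^{i-1})$---with $\nu$ and the auxiliary function $F$ of \eqref{comp1}--\eqref{comp2}---is again a matter of unwinding the recursion level by level and is established in \cite[Corollary~7.7]{MS}. Summing over $i=1,\dots,n$ yields $a_0\le\sum_{i=1}^n\nu(m,C_{r,m}^{i-1})$.

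The main obstacle is the bookkeeping in the two previous paragraphs: faithfully matching the step-by-step behaviour of $\bar\mu^{(n)}$ under rules (i)--(ii) with the layered recursions defining $\Omega_{r,m}^\tau$, $\Upsilon_{r,m}$, and ultimately the Ackermann-based closed forms $B_{r,m,n}^i$ and $\nu(m,\cdot)$. The cleanest route is to isolate a single ``pass'' of the construction (exhausting one coordinate) as a lemma that computes both the resulting nonzero entry and the volume contributed during that pass, and then iterate over passes; this is exactly the reduction carried out in \cite{MS}, so once Corollary~\ref{finally} is in hand the remaining work is to apply those statements.
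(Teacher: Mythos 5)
Your proposal is correct and mirrors the paper's own argument exactly: Corollary~\ref{question} is obtained by specializing Corollary~\ref{finally} and then identifying $\M(r,m,n,\tau)$ with $B_{r,m,n}^{m-\tau}$ via \cite[Corollary~7.4]{MS} and $\Vol\bar\mu^{(i)}$ with $\nu(m,C_{r,m}^{i-1})$ via \cite[Corollary~7.7]{MS}, with the intermediate recursive descriptions $\Omega_{r,m}^\tau$ and $\Upsilon_{r,m}$ serving precisely as the bridge between the construction of $\bar\mu$ and those closed forms.
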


\begin{example}
Using the above formulas, we do the computations in the case when $m=3$ and $n=1$. By the above reductions, in this case we only have to consider the cases when $\tau=0$ or $1$. When $\tau=1$, from the formula of $B_{r,m,n}^{m-\tau}$, we get
$$a_1\leq C_{r-1,3}^1+1=3\cdot 2^{r-1}-2$$
When $\tau=0$, from the formula of $\nu(m,-)$, we get
$$a_0\leq \sum_{i=0}^{r-1}(C_{i,3}^1+1)(C_{i,3}^1+2)=\sum_{i=0}^{r-1}9(4^{i}-2^{i+1}+2^i)+2.$$
We note that these bounds are indeed a significant improvement from the bounds found in Corollary \ref{mainrough}. There we found that 
$$a_1\leq \frac{1}{4}(27(2^r-1)^3+27(2^r-1)^2+6(2^r-1))^2$$ 
and 
$$a_0\leq \frac{1}{8}(27(2^r-1)^3+27(2^r-1)^2+6(2^r-1))^3.$$
\end{example}

\medskip

We finish with the following important question.

\begin{question}
Are the upper bounds found in Corollary \ref{question} optimal? 
\end{question}

Even answering this question in the special case $n=1$ seems to be a difficult but rather interesting task.

\bibliographystyle{plain}

\end{document}